\definecolor{deepjunglegreen}{rgb}{0.0, 0.29, 0.29}
\definecolor{darkspringgreen}{rgb}{0.09, 0.45, 0.27}
\pretocmd\section{\Needspace*{4\baselineskip}}{}{}
\newtheorem{thm}{Theorem}[subsection]
\newtheorem{cor}[thm]{Corollary}
\newtheorem{lem}[thm]{Lemma}
\newtheorem{prop}[thm]{Proposition}
\theoremstyle{definition}
\theoremstyle{remark}
\newtheorem{rem}[thm]{Remark}
\newcommand{\nc}{\newcommand}
\nc{\renc}{\renewcommand} \nc{\ssec}{\subsection}
\nc{\sssec}{\subsubsection}
\nc{\on}{\operatorname} \nc{\wh}{\widehat}
\nc\ol{\overline} \nc\ul{\underline} \nc\wt{\widetilde}
\newcommand{\red}[1]{{\color{red}#1}}
\nc{\BA}{{\mathbb{A}}} \nc{\BC}{{\mathbb{C}}} \nc{\BF}{{\mathbb{F}}}
\nc{\BD}{{\mathbb{D}}} \nc{\BG}{{\mathbb{G}}} \nc{\BQ}{{\mathbb{Q}}}
\nc{\BM}{{\mathbb{M}}} \nc{\BN}{{\mathbb{N}}} \nc{\BO}{{\mathbb{O}}}
\nc{\BP}{{\mathbb{P}}} \nc{\BR}{{\mathbb{R}}}
\nc{\BZ}{{\mathbb{Z}}} \nc{\BS}{{\mathbb{S}}} \nc{\BW}{{\mathbb{W}}}
\nc{\CA}{{\mathcal{A}}} \nc{\CB}{{\mathcal{B}}} \nc{\CalC}{{\mathcal{C}}} \nc{\CalD}{{\mathcal{D}}}
\nc{\CE}{{\mathcal{E}}} \nc{\CF}{{\mathcal{F}}}
\nc{\CG}{{\mathcal{G}}} \nc{\CH}{{\mathcal{H}}}
\nc{\CI}{{\mathcal{I}}} \nc{\CJ}{{\mathcal{J}}} \nc{\CK}{{\mathcal{K}}} \nc{\CL}{{\mathcal{L}}}
\nc{\CM}{{\mathcal{M}}} \nc{\CN}{{\mathcal{N}}}
\nc{\CO}{{\mathcal{O}}} \nc{\CP}{{\mathcal{P}}}
\nc{\CQ}{{\mathcal{Q}}} \nc{\CR}{{\mathcal{R}}}
\nc{\CS}{{\mathcal{S}}} \nc{\CT}{{\mathcal{T}}}
\nc{\CU}{{\mathcal{U}}} \nc{\CV}{{\mathcal{V}}}  \nc{\CX}{{\mathcal X}} \nc{\CY}{{\mathcal Y}}
\nc{\CW}{{\mathcal{W}}} \nc{\CZ}{{\mathcal{Z}}}
\nc{\scrM}{{\mathscr M}} \nc{\scrP}{{\mathscr P}}
\nc{\cM}{{\check{\mathcal M}}{}} \nc{\csM}{{\check{\mathcal A}}{}}
\nc{\oM}{{\overset{\circ}{\mathcal M}}{}}
\nc{\obM}{{\overset{\circ}{\mathbf M}}{}}
\nc{\oCA}{{\overset{\circ}{\mathcal A}}{}}
\nc{\obA}{{\overset{\circ}{\mathbf A}}{}}
\nc{\ooM}{{\overset{\circ}{M}}{}}
\nc{\osM}{{\overset{\circ}{\mathsf M}}{}}
\nc{\vM}{{\overset{\bullet}{\mathcal M}}{}}
\nc{\nM}{{\underset{\bullet}{\mathcal M}}{}}
\nc{\oD}{{\overset{\circ}{\mathcal D}}{}}
\nc{\obD}{{\overset{\circ}{\mathbf D}}{}}
\nc{\oA}{{\overset{\circ}{\mathbb A}}{}}
\nc{\op}{{\overset{\bullet}{\mathbf p}}{}}
\nc{\cp}{{\overset{\circ}{\mathbf p}}{}}
\nc{\oU}{{\overset{\bullet}{\mathcal U}}{}}
\nc{\ofZ}{{\overset{\circ}{\mathfrak Z}}{}}
\nc{\ff}{{\mathfrak{f}}} \nc{\fv}{{\mathfrak{v}}}
\nc{\fa}{{\mathfrak{a}}} \nc{\fb}{{\mathfrak{b}}}
\nc{\fd}{{\mathfrak{d}}} \nc{\fe}{{\mathfrak{e}}}
\nc{\fg}{{\mathfrak{g}}} \nc{\fgl}{{\mathfrak{gl}}}
\nc{\fh}{{\mathfrak{h}}} \nc{\fri}{{\mathfrak{i}}}
\nc{\fj}{{\mathfrak{j}}} \nc{\fk}{{\mathfrak{k}}} \nc{\fl}{{\mathfrak{l}}}
\nc{\fm}{{\mathfrak{m}}} \nc{\fn}{{\mathfrak{n}}}
\nc{\ft}{{\mathfrak{t}}} \nc{\fu}{{\mathfrak{u}}}
\nc{\fw}{{\mathfrak{w}}} \nc{\fz}{{\mathfrak{z}}}
\nc{\fp}{{\mathfrak{p}}} \nc{\fq}{{\mathfrak{q}}} \nc{\frr}{{\mathfrak{r}}}
\nc{\fs}{{\mathfrak{s}}} \nc{\fsl}{{\mathfrak{sl}}}
\nc{\fso}{{\mathfrak{so}}} \nc{\fsp}{{\mathfrak{sp}}} \nc{\osp}{{\mathfrak{osp}}}
\nc{\hsl}{{\widehat{\mathfrak{sl}}}}
\nc{\hgl}{{\widehat{\mathfrak{gl}}}}
\nc{\hg}{{\widehat{\mathfrak{g}}}}
\nc{\chg}{{\widehat{\mathfrak{g}}}{}^\vee}
\nc{\hn}{{\widehat{\mathfrak{n}}}}
\nc{\chn}{{\widehat{\mathfrak{n}}}{}^\vee}
\nc{\fA}{{\mathfrak{A}}} \nc{\fB}{{\mathfrak{B}}} \nc{\fC}{{\mathfrak{C}}}
\nc{\fD}{{\mathfrak{D}}} \nc{\fE}{{\mathfrak{E}}}
\nc{\fF}{{\mathfrak{F}}} \nc{\fG}{{\mathfrak{G}}} \nc{\fH}{{\mathfrak{H}}}
\nc{\fI}{{\mathfrak{I}}} \nc{\fJ}{{\mathfrak{J}}}
\nc{\fK}{{\mathfrak{K}}} \nc{\fL}{{\mathfrak{L}}}
\nc{\fM}{{\mathfrak{M}}} \nc{\fN}{{\mathfrak{N}}}
\nc{\frP}{{\mathfrak{P}}} \nc{\fQ}{{\mathfrak{Q}}} \nc{\fR}{{\mathfrak{R}}}
\nc{\fS}{{\mathfrak{S}}} \nc{\fT}{{\mathfrak{T}}} \nc{\fU}{{\mathfrak{U}}}
\nc{\fV}{{\mathfrak{V}}} \nc{\fW}{{\mathfrak{W}}}
\nc{\fX}{{\mathfrak{X}}} \nc{\fY}{{\mathfrak{Y}}}
\nc{\fZ}{{\mathfrak{Z}}}
\nc{\ba}{{\mathbf{a}}}
\nc{\bb}{{\mathbf{b}}} \nc{\bc}{{\mathbf{c}}} \nc{\be}{{\mathbf{e}}}
\nc{\bg}{{\mathbf{g}}} \nc{\bj}{{\mathbf{j}}} \nc{\bm}{{\mathbf{m}}}
\nc{\bn}{{\mathbf{n}}} \nc{\bp}{{\mathbf{p}}}
\nc{\bq}{{\mathbf{q}}} \nc{\br}{{\mathbf{r}}} \nc{\bs}{{\mathbf{s}}}
\nc{\bt}{{\mathbf{t}}} \nc{\bfu}{{\mathbf{u}}} \nc{\bv}{{\mathbf{v}}}
\nc{\bx}{{\mathbf{x}}} \nc{\by}{{\mathbf{y}}} \nc{\bz}{{\mathbf{z}}}
\nc{\bw}{{\mathbf{w}}} \nc{\bA}{{\mathbf{A}}}
\nc{\bB}{{\mathbf{B}}} \nc{\bC}{{\mathbf{C}}}
\nc{\bD}{{\mathbf{D}}} \nc{\bF}{{\mathbf{F}}} \nc{\bG}{{\mathbf{G}}}
\nc{\bH}{{\mathbf{H}}} \nc{\bI}{{\mathbf{I}}} \nc{\bJ}{{\mathbf{J}}}
\nc{\bK}{{\mathbf{K}}} \nc{\bL}{{\mathbf{L}}} \nc{\bM}{{\mathbf{M}}} \nc{\bN}{{\mathbf{N}}}
\nc{\bO}{{\mathbf{O}}} \nc{\bS}{{\mathbf{S}}} \nc{\bT}{{\mathbf{T}}}
\nc{\bU}{{\mathbf{U}}} \nc{\bV}{{\mathbf{V}}} \nc{\bW}{{\mathbf{W}}}
\nc{\bX}{{\mathbf{X}}}
\nc{\bY}{{\mathbf{Y}}} \nc{\bP}{{\mathbf{P}}}
\nc{\bZ}{{\mathbf{Z}}} \nc{\bh}{{\mathbf{h}}}
\nc{\sA}{{\mathsf{A}}} \nc{\sB}{{\mathsf{B}}}
\nc{\sC}{{\mathsf{C}}} \nc{\sD}{{\mathsf{D}}}
\nc{\sE}{{\mathsf{E}}} \nc{\sF}{{\mathsf{F}}} \nc{\sG}{{\mathsf{G}}}
\nc{\sI}{{\mathsf{I}}} \nc{\sK}{{\mathsf{K}}} \nc{\sL}{{\mathsf{L}}}
\nc{\sfm}{{\mathsf{m}}} \nc{\sM}{{\mathsf{M}}} \nc{\sN}{{\mathsf{N}}}
\nc{\sO}{{\mathsf{O}}} \nc{\sQ}{{\mathsf{Q}}} \nc{\sP}{{\mathsf{P}}} \nc{\sR}{{\mathsf{R}}}
\nc{\sT}{{\mathsf{T}}} \nc{\sZ}{{\mathsf{Z}}}
\nc{\sV}{{\mathsf{V}}} \nc{\sW}{{\mathsf{W}}}
\nc{\sfp}{{\mathsf{p}}} \nc{\sq}{{\mathsf{q}}} \nc{\sr}{{\mathsf{r}}}
\nc{\sfs}{{\mathsf{s}}} \nc{\st}{{\mathsf{t}}} \nc{\sfb}{{\mathsf{b}}}
\nc{\sfc}{{\mathsf{c}}} \nc{\sd}{{\mathsf{d}}}
\nc{\sz}{{\mathsf{z}}}
\nc{\tA}{{\widetilde{\mathbf{A}}}}
\nc{\tB}{{\widetilde{\mathcal{B}}}}
\nc{\tg}{{\widetilde{\mathfrak{g}}}} \nc{\tG}{{\widetilde{G}}}
\nc{\TM}{{\widetilde{\mathbb{M}}}{}}
\nc{\tO}{{\widetilde{\mathsf{O}}}{}}
\nc{\tU}{{\widetilde{\mathfrak{U}}}{}} \nc{\TZ}{{\tilde{Z}}}
\nc{\tx}{{\tilde{x}}} \nc{\tbv}{{\tilde{\bv}}}
\nc{\tfP}{{\widetilde{\mathfrak{P}}}{}} \nc{\tz}{{\tilde{\zeta}}}
\nc{\tmu}{{\tilde{\mu}}}
\nc{\urho}{\underline{\rho}} \nc{\uB}{\underline{B}}
\nc{\uC}{{\underline{\mathbb{C}}}} \nc{\ui}{\underline{i}}
\nc{\uj}{\underline{j}} \nc{\ofP}{{\overline{\mathfrak{P}}}}
\nc{\oB}{{\overline{\mathcal{B}}}}
\nc{\og}{{\overline{\mathfrak{g}}}} \nc{\oI}{{\overline{I}}}
\nc{\eps}{\varepsilon} \nc{\hrho}{{\hat{\rho}}} \nc{\bsigma}{{\boldsymbol{\sigma}}}
\nc{\blambda}{{\boldsymbol{\lambda}}} \nc{\bmu}{{\boldsymbol{\mu}}} \nc{\bnu}{{\boldsymbol{\nu}}}
\nc{\btheta}{{\boldsymbol{\theta}}} \nc{\bzeta}{{\boldsymbol{\zeta}}} \nc{\bta}{{\boldsymbol{\eta}}}
\nc{\bomega}{{\boldsymbol{\omega}}} \nc{\bxi}{{\boldsymbol{\xi}}} \nc{\brho}{{\boldsymbol{\rho}}}
\nc{\one}{{\mathbf{1}}} \nc{\two}{{\mathbf{t}}}
\nc{\Sym}{\mathop{\operatorname{\rm Sym}}} \nc{\RH}{{\mathop{\rm RH}}} \nc{\RT}{{\mathop{\rm RT}}}
\nc{\Tot}{{\mathop{\operatorname{\rm Tot}}}}
\nc{\Spec}{\mathop{\operatorname{\rm Spec}}}
\nc{\Ker}{{\mathop{\operatorname{\rm Ker}}}}
\nc{\Isom}{{\mathop{\operatorname{\rm Isom}}}}
\nc{\Hilb}{{\mathop{\operatorname{\rm Hilb}}}}
\nc{\deeq}{{\mathop{\operatorname{\rm deeq}}}}
\nc{\End}{{\mathop{\operatorname{\rm End}}}}
\nc{\Ext}{{\mathop{\operatorname{\rm Ext}}}}
\nc{\Hom}{{\mathop{\operatorname{\rm Hom}}}}
\nc{\CHom}{{\mathop{\operatorname{{\mathcal{H}}\!\it om}}}}
\nc{\GL}{{\mathop{\operatorname{\rm GL}}}}
\nc{\PGL}{{\mathop{\operatorname{\rm PGL}}}}
\nc{\SL}{{\mathop{\operatorname{\rm SL}}}}
\nc{\SO}{{\mathop{\operatorname{\rm SO}}}}
\nc{\Sp}{{\mathop{\operatorname{\rm Sp}}}}
\nc{\Mp}{{\widetilde{\mathop{\operatorname{\rm Sp}}}}}
\nc{\OSp}{{\mathop{\operatorname{\rm SOSp}}}}
\nc{\gr}{{\mathop{\operatorname{\rm gr}}}}
\nc{\Id}{{\mathop{\operatorname{\rm Id}}}}
\nc{\perf}{{\mathop{\operatorname{\rm perf}}}}
\nc{\defi}{{\mathop{\operatorname{\rm def}}}}
\nc{\length}{{\mathop{\operatorname{\rm length}}}}
\nc{\supp}{{\mathop{\operatorname{\rm supp}}}}
\nc{\HC}{{\mathcal H}{\mathcal C}}
\nc{\pr}{{\operatorname{pr}}}
\nc{\Cliff}{{\mathsf{Cliff}}}
\nc{\Loc}{{\operatorname{Loc}}} \nc{\LOC}{{\bf{Loc}}}
\nc{\Fl}{{\mathbf{Fl}}} \nc{\Ffl}{{\mathcal{F}\ell}}
\nc{\Fib}{{\mathsf{Fib}}}
\nc{\Coh}{{\operatorname{Coh}}} \nc{\FCoh}{{\mathsf{FCoh}}}
\nc{\Perf}{{\mathsf{Perf}}}
\nc{\wtimes}{\mathbin{\widetilde\times}}
\nc{\reg}{{\text{\rm reg}}}
\nc{\self}{{\text{\rm self}}}
\nc{\gvee}{{\mathfrak g}^{\!\scriptscriptstyle\vee}}
\nc{\tvee}{{\mathfrak t}^{\!\scriptscriptstyle\vee}}
\nc{\nvee}{{\mathfrak n}^{\!\scriptscriptstyle\vee}}
\nc{\bvee}{{\mathfrak b}^{\!\scriptscriptstyle\vee}}
\nc{\rhovee}{\rho^{\!\scriptscriptstyle\vee}}
\newcommand{\svee}{{\!\scriptscriptstyle\vee}}
\nc{\cplus}{{\mathbf{C}_+}} \nc{\cminus}{{\mathbf{C}_-}}
\nc{\cthree}{{\mathbf{C}_*}} \nc{\Qbar}{{\bar{Q}}}
\newcommand{\bCD}{\vphantom{j^{X^2}}\smash{\overset{\bullet}{\vphantom{\rule{0pt}{0.55em}}\smash{\mathcal D}}}}
\newcommand\iso{\mathbin{\vphantom{j^{X^2}}\smash{\overset{\sim}{\vphantom{\rule{0pt}{0.20em}}\smash{\longrightarrow}}}}}
\nc{\Gtimes}{\vphantom{j^{X^2}}\smash{\overset{G}{\vphantom{\rule{0pt}{0.30em}}\smash{\times}}}}
\nc{\sGtimes}{\vphantom{j^{X^2}}\smash{\overset{\mathsf G}{\vphantom{\rule{0pt}{0.30em}}\smash{\times}}}}
\nc{\bOmega}{{\overline{\Omega}}}
\nc{\seq}[1]{\stackrel{#1}{\sim}}
\nc{\aff}{{\operatorname{aff}}}
\nc{\fin}{{\operatorname{fin}}}
\nc{\mir}{{\operatorname{mir}}}
\nc{\triv}{{\operatorname{triv}}}
\nc{\ext}{{\operatorname{ext}}}
\nc{\righ}{{\operatorname{right}}}
\nc{\lef}{{\operatorname{left}}}
\nc{\forg}{{\operatorname{forg}}}
\nc{\fid}{{\operatorname{fd}}}
\nc{\odd}{{\operatorname{odd}}}
\nc{\even}{{\operatorname{even}}}
\nc{\modu}{{\operatorname{-mod}}}
\nc{\Dmod}{{\operatorname{D-mod}_{-1/2}^{G_{\CO}}}}
\nc{\Dmo}{\operatorname{D-mod}_{-1/2}}
\nc{\Gr}{{\operatorname{Gr}}} \nc{\LGr}{{\operatorname{LGr}}}
\nc{\tGr}{{\widetilde{\operatorname{Gr}}}} \nc{\tLGr}{{\widetilde{\operatorname{LGr}}}}
\nc{\GR}{{\mathbf{Gr}}}
\nc{\tGR}{{\widetilde{\mathbf{Gr}}}}
\nc{\FT}{{\operatorname{FT}}}
\nc{\Mat}{{\operatorname{Mat}}}
\nc{\MSt}{{\operatorname{MSt}}}
\nc{\sph}{{\operatorname{sph}}}
\nc{\Perv}{{\operatorname{Perv}}}
\nc{\Rep}{{\operatorname{Rep}}}
\nc{\Ind}{{\operatorname{Ind}}}
\nc{\IC}{{\operatorname{IC}}}
\nc{\Bun}{{\operatorname{Bun}}}
\nc{\Proj}{{\operatorname{Proj}}}
\nc{\Stab}{{\operatorname{Stab}}}
\nc{\Sq}{{\operatorname{Sq}}}
\nc{\pt}{{\operatorname{pt}}}
\nc{\sico}{{\operatorname{sc}}}
\nc{\bfmu}{{\boldsymbol{\mu}}}
\nc{\bfomega}{{\boldsymbol{\omega}}}
\nc{\bGamma}{{\boldsymbol{\Gamma}}}
\nc{\calM}{\mathcal M}
\nc{\calA}{\mathcal A}
\nc{\calO}{\mathcal O}
\nc{\CC}{\mathbb C}
\nc{\calN}{\mathcal N}
\nc{\grg}{\mathfrak g}
\nc{\dslash}{/\!\!/}
\nc{\tslash}{/\!\!/\!\!/}
\nc\grt{\mathfrak t}
\nc\bfM{\mathbf M}
\nc\bfN{\mathbf N}
\nc\Sig{\Sigma}
\nc\ZZ{\mathbb{Z}}
\nc\calC{\mathcal C}
\nc\calF{\mathcal F}
\nc\calX{\mathcal X}
\nc\calY{\mathcal Y}
\nc\QCoh{\operatorname{QCoh}}
\nc\IndCoh{\operatorname{IndCoh}}
\nc\Maps{\operatorname{Maps}}
\newcommand\Hecke{\operatorname{Hecke}}
\nc{\calD}{\mathcal D}
\nc\bfO{\mathbf O}
\nc\GG{\mathbb G}
\nc\calK{\mathcal K}
\nc{\calG}{\mathcal G}
\nc\RHom{\operatorname{RHom}}
\nc\Res{\operatorname{Res}}
\nc\Av{\operatorname{Av}}
\nc\grs{\mathfrak s}
\nc{\tilX}{\widetilde X}
\nc\calB{\mathcal B}
\nc\calS{\mathcal S}
\nc\calT{\mathcal T}
\nc\calZ{\mathcal Z}
\nc\LS{\operatorname{LocSys}}
\nc\bfL{\on{\mathbf L}}
\nc{\gru}{\mathfrak u}
\newcommand*\circled[1]
\newcommand{\raisemath}[1]{\mathpalette{\raisem@th{#1}}}
\newcommand{\raisem@th}[3]{\raisebox{#1}{$#2#3$}}
\nc{\binlim}[2][]{\def\@tempa{#1}\@ifnextchar^{\@binlim{#2}}{\@binlim{#2}^{}}}
\def\@binlim#1^#2{\mathbin{\@ifempty{#2}{\mathop{#1}}{\mathop{#1}\@xp\displaylimits\@tempa^{#2}}}}
\nc\cX{{\mathcal X}}
\newcommand{\dbkts}[1]{[\![#1]\!]}
\newcommand{\dprts}[1]{(\!(#1)\!)}
\nc\Gm{{\mathbb G_m}}
\renc\Hecke{\mathit{\CH\kern-.2ex ecke}}
\nc\Fq{\mathbb F_q}
\nc\bGO{{\bG_\bO}}
\nc\opp{{\on{op}}}
\nc\tbx{\binlim{\widetilde\boxtimes{}}}
\nc\phitau{\varphi\tau}
\newenvironment{i-ii-iii}{%
\begin{enumerate}
}%
{\end{enumerate}}
\nc\ceil[1]{\lceil#1\rceil}  \nc\floor[1]{\lfloor#1\rfloor}
\nc\Lie{\on{Lie}}
\nc\sS{{\mathsf S}}
\nc\vvv{\ensuremath{\red\surd}}
\def\arxiv#1{\href{http://arxiv.org/abs/#1}{\tt arXiv:#1}} 
\nc\kap{\kappa}
\nc\gra{\mathfrak a}
\nc\gl{\mathfrak{gl}}
\nc\sTr{\operatorname{sTr}}
\nc\hatG{\widehat{G}}
\nc\calL{\mathcal L}
\nc\Whit{\operatorname{Whit}}
\nc\KL{\operatorname{KL}}
\newcommand\PP{\mathbb P}
\newcommand\B{{\mathrm B}}
\renewcommand{\subsection}{\@startsection{subsection}{2}{0pt}{-3ex
plus -1ex minus -0.2ex}{-2mm plus -0pt minus
-2pt}{\normalfont\bfseries}} \makeatother
\let\@wraptoccontribs\wraptoccontribs
\numberwithin{equation}{subsection}
\nc\mto{\mapsto }
\nc\en{\enspace }
\nc\ome{\omega}
\newcommand{\FL}{\mathbf{Fl}}
\newcommand{\DMOD}{\on{D-mod}_\kappa}
\newcommand{\DMODD}{\on{D-mod}_{-\kappa + 2\kappa_c}}
\newcommand{\wpi}{\widetilde{\pi}}
\newcommand{\afl}{\on{Fl}_G}
\newcommand{\tfl}{\widetilde{\on{Fl}}_G}
\newcommand{\mon}{\on{-mon}}
\newcommand{\gk}{\widehat{\fg}_\kappa}
\newcommand{\gkk}{\widehat{\fg}_{-\kappa + 2\kappa_c}}
\renewcommand{\mod}{\on{-mod}}
\nc{\tili}{\widetilde i}
\begin{document}

\author[A.Braverman]{Alexander Braverman}
\address{Department of Mathematics, University of Toronto and Perimeter Institute
of Theoretical Physics, Waterloo, Ontario, Canada, N2L 2Y5}
\email{braval@math.toronto.edu}

\author[G.Dhillon]{Gurbir Dhillon}
\address{UCLA Mathematics Department, Los Angeles ,CA 90095-1555, USA}
\email{gsd@math.ucla.edu}

\author[M.Finkelberg]{Michael Finkelberg}
\address{Einstein Institute of Mathematics, The Hebrew University of Jerusalem,
  Edmond J. Safra Campus, Giv’at Ram, Jerusalem, 91904, Israel;
\newline  National Research University Higher School of Economics;
\newline Skolkovo Institute of Science and Technology}
\email{fnklberg@gmail.com}

\author[S.Raskin]{Sam Raskin}
\address{Yale University, Department of Mathematics, 219 Prospect st, New Haven, CT 06511, USA}
\email{sam.raskin@yale.edu}

\author[R.Travkin]{Roman Travkin}
\address{Skolkovo Institute of Science and Technology, Moscow, Russia}
\email{roman.travkin2012@gmail.com}

\contrib[With appendices by]{Gurbir Dhillon and Theo Johnson-Freyd}

\title
    {Coulomb branches of noncotangent type}
    \maketitle

    \begin{abstract}
We propose a construction of the Coulomb branch of a $3d\ \CN=4$ gauge theory corresponding to a
choice of a connected reductive group $G$ and a symplectic finite-dimensional reprsentation $\bM$
of $G$, satisfying certain anomaly cancellation condition. This extends the construction of~\cite{bfna}
(where it was assumed that $\bfM=\bfN\oplus \bfN^*$ for some representation $\bfN$ of $G$).
Our construction goes through certain ``universal'' ring object in the twisted derived Satake
category of the symplectic group $\Sp(2n)$. The construction of this object uses a categorical version
of the Weil representation; we also compute the image of this object under the (twisted) derived
Satake equivalence and show that it can be obtained from the theta-sheaf~\cite{l,ll} on
$\Bun_{\Sp(2n)}(\BP^1)$ via certain Radon transform. We also discuss applications of our construction
to a potential mathematical construction of $S$-duality for super-symmetric boundary conditions in
4-dimensional gauge theory and to (some extension of) the conjectures of Ben-Zvi, Sakellaridis and
Venkatesh.
    \end{abstract}

\tableofcontents

\section{Introduction}
\subsection{Symplectic duality}
Let $X$ be an algebraic variety over $\CC$. We say that $X$ is singular symplectic (or $X$ has symplectic singularities) if

(1) $X$ is a normal Poisson variety;

(2) There exists a smooth dense open subset $U$ of $X$ on which the Poisson structure comes from a symplectic structure. We shall denote by $\omega$ the corresponding symplectic form.

(3) There exists a resolution of singularities $\pi\colon \tilX\to X$ such that $\pi^*\omega$ has no poles on $\tilX$.

We say that $X$ is a conical symplectic singularity if in addition to~(1)-(3) above one has a
$\CC^{\times}$-action on $X$ which acts on $\ome$ with some positive weight and which contracts all of $X$ to one point.

A symplectic resolution  $\pi\colon \tilX\to X$  is a proper and birational morphism $\pi$ such that
$\pi^*\omega$ extends to a symplectic form on $\tilX$.
Here is one example. Let $\grg$ be a semi-simple Lie algebra over $\CC$ and let $\calN_{\grg}\subset \grg^*$ be its nilpotent cone. Let $\calB$ denote the flag variety of $\grg$. Then the Springer map $\pi\colon T^*\calB\to \calN_{\grg}$ is proper and birational, so if we let $X=\calN_{\grg}, \tilX=T^*\calB$ we get a symplectic resolution.

The idea of symplectic duality is this: often conical symplectic singularities come in ``dual" pairs $(X,X^*)$ (the assignment $X\to X^*$ is by no means a functor; we just have a lot of interesting examples of dual pairs). What does it mean that $X$ and $X^*$ are dual? This is in general not easy to tell, but many geometric questions about $X$ should be equivalent to some {\em other} geometric questions about $X^*$.
For example,  we should have
$\dim H^\bullet(\tilX,\CC)=\dim H^\bullet(\tilX^*,\CC)$
(but these spaces are not supposed to be canonically isomorphic).
We refer the reader to \cite{BPW}, \cite{BLPW} for more details. There should be a lot of other connections between $X$ and $X^*$ which will take much longer to describe; we refer the reader to
{\em loc.cit.}\ for the description of these properties as well as for examples.

\subsection{3-dimensional $\CN=4$ quantum field theories}\label{3dN4}
One source of dual pairs $(X,X^*)$ comes from quantum field theory in the following way.
Physicists have a notion of 3-dimensional $\CN=4$ super-symmetric quantum field theory. Any such theory $\calT$ is supposed to have a well-defined moduli space of vacua $\calM(\calT)$. This space is complicated, but it should have two special pieces called the Higgs and the Coulomb branch; we shall denote these by $\calM_H(\calT)$ and $\calM_C(\calT)$. They are supposed to be (singular) symplectic complex algebraic varieties (in fact, they don't even have to be algebraic but for simplicity we shall only consider examples when they are).

Let $G$ be a complex reductive algebraic group and let $\bfM$ be a symplectic vector space with a Hamiltonian action of $G$. Then to the pair $(G,\bfM)$ one is supposed to associate
a theory $\calT(G,\bfM)$ provided that $\bfM$ satisfies certain {\em anomaly cancellation condition}, which can be formulated as follows. The representation $\bfM$ defines a homomorphism $G\to \Sp(\bfM)$ and thus a homomorphism
$\pi_4(G)\to \pi_4(\Sp(\bfM))=\ZZ/2\ZZ$. The anomaly cancellation condition is the condition that
this homomorphism is trivial.  Without going to further details at the moment we would like to
emphasize the following:

1) Any $\bfM$ of the form $T^*\bfN=\bfN\oplus\bfN^*$ where $\bfN$ is some representation $G$ satisfies this condition.

2) The anomaly cancellation condition is a ``$\ZZ/2\ZZ$-condition" (later on we are going to formulate it more algebraically).

\medskip
\noindent
Assume that we are given $\bfM$ as above for which the anomaly cancellation condition is satisfied.  Then the theory $\calT(G,\bfM)$ is called {\em gauge theory with gauge group $G$ and matter $\bfM$}. Its Higgs branch is expected to be equal to $\bfM\tslash G$: the Hamiltonian reduction of $\bfM$ with respect to $G$.
In particular, all Nakajima quiver varieties arise in this way (the corresponding theories are called {\em quiver gauge theories}).

The corresponding Coulomb branches are much trickier to define. Physicists had some expectations about those but no rigorous definition in general (only some examples).
The idea is that at least in the conical case the pair $(\calM_H(\calT),\calM_C(\calT))$ should produce an example of a dual symplectic pair.
A mathematical approach to the definition of Coulomb branches was proposed in~\cite{n}.
A rigorous definition of the Coulomb branches $\CM_C(G,\bM)$ is given in~\cite{bfna} under the
assumption that $\bfM=T^*\bfN=\bfN\oplus\bfN^*$ for some representation $\bfN$ of
$G$.\footnote{In addition to the Coulomb branch $\CM_C(G,\bM)$, in~\cite[Remark 3.14]{bfna}
the authors define the $K$-theoretic Coulomb branch $\CM_C^K(G,\bM)$
under the same assumption (physically, it should correspond to the Coulomb branch of the corresponding 4d gauge theory of ${\mathbb R}^3\times S^1$). We would like to emphasize that at this point we are {\em not} able to extend this construction to arbitrary symplectic
$\bM$ with anomaly cancellation condition. Such an extension is given in~\cite{t}.}
The varieties $\calM_C(G,\bfM)$ are normal, affine, Poisson, generically symplectic and satisfy the
monopole formula. We expect that they are singular symplectic, but we can not prove this in general,
cf.~\cite{we}.
The main ingredient in the definition is the geometry of the affine Grassmannian $\Gr_G$ of $G$.
In~\cite{bfna,bfnb,bfnc} these varieties are computed in many cases (in particular, in the case of so called quiver gauge theories --- it turns out that one can associate a pair $(G,\bfN)$ to any framed quiver).
The quantizations of these varieties are also studied, as well as their (Poisson) deformations and (partial) resolutions.

\subsection{Coulomb branches via ring objects in the derived Satake category}
Let $\CK=\BC\dprts t\supset\CO=\BC\dbkts t$. The affine Grassmannian ind-scheme $\Gr_G=G_\CK/G_\CO$
is the moduli space of $G$-bundles on the formal disc equipped with a trivialization on the
punctured formal disc. One can consider the {\em derived Satake category}
$D_{G_\CO}(\Gr_G)$.\footnote{In fact we are going to work with a certain renormalized version of it, cf.~\S\ref{aff gr}.}
This is a factorization monoidal category which is monoidally equivalent to
$D^{G^{\vee}}(\Sym^\bullet(\gvee[-2]))$: the derived category of dg-modules over
$\Sym^\bullet(\gvee[-2])$ endowed with a compatible action of $G^{\vee}$
(the monoidal structure on this category is just given by tensor product over
$\Sym^\bullet(\gvee[-2])$); we shall denote the corresponding functor from $D_{G_\CO}(\Gr_G)$ to
$D^{G^{\vee}}(\Sym^\bullet(\gvee[-2])$ by $\Phi_G$.
In \cite{bfnc} we have attached to any $\bfN$ as above a certain ring object $\calA_{G,\bfM}$ in
$D_{G_\CO}(\Gr_G)$ (here as before we set $\bfM=T^*\bfN$) such
that the algebra of functions on $\calM_C(G,\bfM)$ is equal to $H^\bullet_{G_\CO}(\Gr_G,\calA_{G,\bfM})$
(this cohomology has an algebra structure coming from the fact that $\calA_{G,\bM}$ is a ring object).

\subsection{Ring objects for general $\bfM$ and twisted Satake category}
One of the main goals of this paper is to construct the ring object $\calA_{G,\bfM}$ for
arbitrary symplectic representation $\bfM$ satisfying the anomaly cancellation
condition.\footnote{Another construction of the Coulomb branch of a $3d\ \CN=4$ gauge theory
  in the noncotangent case was proposed by C.~Teleman~\cite{t}.}
In fact, we can construct the ring object $\calA_{G,\bfM}$ for any symplectic $\bfM$ but instead of
being an object of the derived Satake category $D_{G_\CO}(\Gr_G)$ it will be an object of a certain
twisted version of it. More precisely, the representation $\bfM$ defines certain determinant line
bundle $\CL_{\bfM}$ on $\Gr_G$ which is equipped with certain multiplicative and factorization
structure; we shall denote by $\CL_{\bfM}^0$ the total space of this bundle without the zero section.
The line bundle $\CL_{\bfM}$ is also $G_\CO$-equivariant. In particular, for any $\tau\in \BC$ one can
consider the category $D_\tau^{G_\CO}(\Gr_G)$ of $G_\CO$-equivariant sheaves on $\CL_{\bfM}^0$ that are
$\BC^\times$-monodromic with monodromy $q=e^{2\pi i \tau}$. This category is again factorization monoidal
(because of the above multiplicative and factorization structure on $\CL_\bfM$). If $\tau$ is a
rational number and $\CL_\bfM^\tau$ exists as a multiplicative factorization line bundle on $\Gr_G$,
the twisted category $D_\tau^{G_\CO}(\Gr_G)$ is naturally equivalent to $D_{G_\CO}(\Gr_G)$
(as a factorization monoidal category).

In this paper we shall construct a ring object $\calA_{G,\bfM}\in D_{-1/2}^{G_\CO}(\Gr_G)$. It turns out
(see~Proposition~\ref{theo}) that the anomaly cancellation condition is equivalent to the existence of
a factorization multiplicative square root of $\calD_\bfM$ as a {\em super} line bundle.
So, we can construct the ring object $\calA_{G,\bfM}$ but it will be untwisted only if the anomaly
cancellation condition is satisfied. In particular, we can take its $G_\CO$-equivariant cohomology
(and thus define the algebra of functions on the corresponding Coulomb branch) only under the anomaly
cancellation assumption.


\subsection{The universal twisted ring object}
In fact in order to construct the ring object $\calA_{G,\bfM}$ for any $G$ and $\bfM$ it is enough to
do it when $G=\Sp(2n)$ and $\bfM=\CC^{2n}$ is its tautological representation. The reason is as
follows. Assume first that $\bM=T^*\bN$ and let $i\colon G'\to G$ be a homomorphism of connected
reductive groups. It induces a morphism $\tili\colon \Gr_{G'}\to \Gr_G$, and it follows from the construction of \cite{bfnc} that $\calA_{G',\bfM}=\tili^!\calA_{G,\bfM}$. Assuming that the same is true for arbitrary $\bfM$ and since the symplectic representation $\bfM$ is the same as a homomorphism $G\to \Sp(\bfM)$  we see that the case $G=\Sp(\bfM)$ is universal in the sense that the object $\calA_{G,\bfM}$ in general should just be equal
to the $!$-pullback of $\calA_{\Sp(\bfM),\bfM}$.\footnote{This was first observed by V.~Drinfeld.}

In this paper we do the following:

1) We construct the object $\calA_{G,\bfM}$  (as was explained above it is enough to do it in the case $G=\Sp(\bfM)$).

2) We check that when $\bfM=T^*\bfN$ for some representation $\bfN$ of $G$,
this construction coincides with the one of~\cite{bfnc}.

3) In the case when $G=\Sp(\bfM)$ we compute the image of $\calA_{G,\bfM}$ under the twisted version
of the derived geometric Satake equivalence (see~\S\ref{1.8} below). To do that we express
$\calA_{G,\bfM}$ as a Radon transform of a certain theta-sheaf~\cite{l,ll} for the curve $\PP^1$
(the necessary facts and definitions about the Radon transform are reviewed in Appendix~\ref{gd}).
The idea that $\calA_{G,\bfM}$ should be related to the theta-sheaf also belongs to V.~Drinfeld.

\subsection{Idea of the construction}
Let us briefly explain the idea of the construction of $\calA_{G,\bfM}$. Let $\calC$ be a (dg) category endowed with a strong action of an algebraic group $H$ (e.g.\ one can take $\calC$ to be the (dg-model of the) derived category of $D$-modules on a scheme $X$ endowed with an action of $H$). Let $\calF$ be an object of $\calC$ which is equivariant under some closed subgroup $L$ of $H$. Then one can canonically attach to $\calF$ a ring object $\calA_\CF\in\mathrm{D}\modu^L(H/L)$ (the $L$-equivariant derived category of $D$-modules on $H/L$; this category is endowed with a natural monoidal structure with respect to convolution). This object has the property that its !-restriction to any $h\in H$ is equal to $\RHom(\calF,\calF^h)$.

Here is a variant of this construction. Assume that $H$ is endowed with a central extension
\[1\to \BC^\times\to \widetilde{H}\to H\to 1,\]
which splits over $L$. Then for any $\kappa\in \CC$ it makes sense to talk about an action $H$ on $\calC$ of level $\kappa$. Then in the same way as above we can define $\calA_{\calF}\in \DMOD^L(H/L)$ (here $\DMOD$ stands for the corresponding category of twisted $D$-modules on $H/L$).
The same thing works when $H$ is a group ind-scheme. We are going to apply it to the case when
$H=G_{\calK}, L=G_{\calO}, \calC=\CW\modu$, where $\CW$ is the Weyl algebra of the symplectic vector
space $\bM_\CK$. The line bundle $\CL_{\bfM}$ defines a central extension
${\widetilde G}_{\calK}$ of $G_{\calK}$, and it is well-known that the action of $G_{\calK}$ on
$\bM_\CK$ naturally extends to a strong action of ${\widetilde G}_{\calK}$ on $\CW\modu$ of
level $-1/2$.\footnote{This action should be thought of as
a categorical analog of the Weil representation, cf.~\cite{ll} and~\cite{la2}.}
We now take $\calF$ to be $\BC[\bM_\CO]$. The corresponding ring object $\calA_{G,\bfM}$
is just (the Riemann-Hilbert functor applied to) $\calA_\calF$ for $\calF$ as
above. It is not difficult
to check that when $\bfM=T^*\bfN$ this construction coincides with the one of \cite{bfnc}.

\begin{rem}
Here we make a remark about a connection between the above construction and some physics terminology.
Suppose $\bfM$ is a symplectic representation of $G$ and suppose the anomaly cancellation holds. In this case, physicists would say that there are two (closely related) structures attached to this data:

\medskip
a) a 3d $\calN = 4$ theory $T(G,\bfM)$ such that $T(G,\bfM)$ has what physicists call $G$-flavor symmetry.
In this case one can {\em gauge this symmetry} to get a new 3d $\calN=4$ theory; this new theory is the theory
$\calT(G,\bfM)$ discussed in~\S\ref{3dN4};

b) a supersymmetric boundary condition $\calB(G,\bfM)$ for 4d $\calN = 4$ Yang-Mills.

\medskip
\noindent
The relationship between the two is that $\calT(G,\bfM)$ is obtained from $\calB(G,\bfM)$ by pairing with the Dirichlet boundary condition for Yang-Mills; this implies that $T(G,\bfM)$ has $G$-flavor symmetry (it comes from the corresponding symmetry of the Dirichlet boundary condition).
Our constructions yield algebraic data attached to \emph{A-twists} of the resulting physical theories. The category $\CW$-mod is the \emph{category of line operators} of (the A-twist of) $T(G,\bfM)$, and the $G_{\calK}$-action on $\CW$-mod expresses the $G$-flavor symmetry of $T(G,\bfM)$. More details about the connection between our language and the physics language can be found in \cite{hr}.
\end{rem}

\subsection{$S$-duality and Ben-Zvi-Sakellaridis-Venkatesh conjectures}
This subsection is somewhat digressive from the point of view of the main body of this paper. We include it here for completeness and in order to indicate some future research directions.
\subsubsection{$S$-duality for boundary conditions}
The papers \cite{gw1,gw2} developed the theory of super-symmetric boundary conditions in 4d gauge
theories; it follows from {\em loc.cit.}\ that in addition to symplectic duality one should expect
some kind of $S$-duality for affine symplectic varieties $\bfM$ endowed with a Hamiltonian action of $G$ (here we no longer assume that $\bfM$ is a vector space) and with a $\BC^\times$-action for which the symplectic form has degree 2 --- again, satisfying some kind of anomaly cancellation condition (we don't know how to formulate it precisely, but when $\bfM$ is a symplectic  vector space with a linear action of $G$, it should be the same condition as before;  also, this condition should automatically be satisfied when $\bfM=T^*\bfN$ where $\bfN$ is a smooth affine $G$-variety). The $S$-dual of $\bfM$ is another affine variety $\bfM^{\vee}$ endowed with a Hamiltonian action of the Langlands dual group $G^{\vee}$. In fact, this kind of duality is not expected to be well-defined for arbitrary $\bfM$ --- only in some ``nice'' cases, which we don't know how to describe mathematically. Physically, it is explained in
{\em loc.cit.}\ that to any $\bfM$ as above one can attach a super-symmetric boundary condition in the corresponding 4-dimensional gauge theory; $S$-duality is supposed to be a well-defined operation on such boundary conditions, but since not all super-symmetric boundary conditions come from $\bfM$ as above, it follows that $\bfM^{\vee}$ will be well-defined only if we are sufficiently lucky.  It should also be noted that in general one should definitely consider singular symplectic varieties. On the other hand, below we describe a rather general construction and some expected properties of it. Let us also note that more generally, when the anomaly cancellation condition is not satisfied, one should expect a duality between varieties $\bfM$ and $\bfM^{\vee}$
endowed with some additional ``twisting data".

\subsubsection{The Whittaker reduction}
Before we discuss a somewhat general approach to the construction of the $S$-duality, let us give some explicit examples as well as some properties of $S$-duality. First we need to recall the notion of
Whittaker reduction.

Let $\bfM$ be any  Hamiltonian $G$-variety (i.e.\ $\bfM$ is a Poisson variety with a Hamiltonian $G$-action).
Let $\mu\colon \bfM\to \grg^*$ be the corresponding moment map.
Let also $U\subset G$ be a maximal unipotent subgroup of $G$ and let $\psi\colon U\to \GG_a$ be a generic homomorphism.
Then we set $\Whit_G(\bfM)$ to be the Hamiltonian reduction of $\bfM$ with respect to $(U,\psi)$. In other words, let us view $\psi$ as an element of $\gru^*$ (here $\gru$ is the Lie algebra of $U$) and let
$\grg^*_{\psi}$ be the pre-image of $\psi$ under the natural projection $\grg^*\to \gru^*$. Then
\[\Whit_G(\bfM)=(\mu^{-1}(\grg^*_{\psi}))/U.\]
It is well-known (cf.~\cite{kos}) that the action of $U$ on $\grg^*_{\psi}$ is free, so it is also free on $\mu^{-1}(\grg^*_{\psi})$.
Also note that $\mu^{-1}(\grg^*_{\psi})$ is an honest scheme (as opposed to a dg-scheme) because the moment map $\Whit_G(T^*G)  = \grg^*_\psi \overset U\times G \to \grg^*$ is smooth.

More generally, we can talk about the Whittaker reduction of any $G$-equivariant $\Sym(\grg)$-module. The connection between the Whittaker reduction and the derived Satake isomorphism is this: it is shown in~\cite{bef}
that for any $\calF\in D_{G_\CO}(\Gr_G)$ we have
\begin{equation}\label{sat-whit}
H^\bullet_{G_\CO}(\Gr_G,\calF)=\Whit_{G^{\vee}}(\Phi(\calF)).
\end{equation}

\subsubsection{Some expected properties of $S$-duality}
\label{expected}
Here are some purely mathematical properties that are expected to be satisfied by the $S$-dual
variety $\bfM^{\vee}$ (when it is well-defined):

1) Assume that $\bfM$ is a point. Then $\bfM^{\vee}=\Whit_{G^{\vee}}(T^*G^{\vee})$ (note that $T^*G^{\vee}$ is endowed with two commuting $G^{\vee}$-actions, so after we take the Whittaker reduction with respect to one of them, the 2nd one remains).

2) Let $H$ be a connected reductive group and set $G=H\times H$. Let $\bfM=T^*H$ (with natural $G$-action). Then we should
have $\bfM^{\vee}=T^*H^{\vee}$.

3) Assume that $\bfM$ is a linear symplectic representation of $G$ satisfying the anomaly cancellation condition. Then one should have
\begin{equation}\label{coul-s}
\calM_C(G,\bfM)=\Whit_{G^{\vee}}(\bfM^{\vee}).
\end{equation}

4) We expect that $(\bfM^{\vee})^{\vee}=\bfM$ whenever it makes sense.

\subsubsection{Construction of $\bfM^{\vee}$ in the cotangent case}
Here is a construction in the case when $\bfM=T^*\bfN$ where $\bfN$ is a smooth affine $G$-variety.
The construction of the ring object $\calA_{G,\bM}$ from \cite{bfnc} makes sense verbatim in this case (in \cite{bfnc} $\bfN$ was a vector space but it is not important for the construction). Let us consider $\Phi_G(\calA_{G,\bM})$. This is a commutative ring object of the derived category of $G^{\vee}$-equivariant dg-modules over $\Sym^\bullet(\gvee[-2])$. Passing to its cohomology $H^\bullet(\Phi_G(\calA_{G,\bM}))$ we just get a graded commutative algebra over
$\Sym^\bullet(\gvee[-2])$.\footnote{In all the interesting cases we know the algebra
$\Phi_G(\calA_{G,\bM})$ is formal, so we do not loose any information after passing to cohomology.}
Assuming that it has no cohomology in odd degrees, we can pass to its spectrum $\bM^\vee$.
This is an affine scheme with an action of $G^{\vee}$ which is endowed with a compatible map to $(\grg^\vee)^*$. In fact, the object $\calA_{G,\bM}$ is naturally equivariant with respect to the $\BC^\times$-action which rescales $t\in \calK$ (this action is usually
called ``loop rotation"). It is not difficult to see that (in the same way as in \cite{bfnc}) this defines a natural non-commutative deformation of the ring $H^\bullet(\Phi_G(\calA_{G,\bM}))$, and it particular, we get a Poisson structure on $\bfM^{\vee}$. This Poisson structure is easily seen to be generically symplectic and the above map to $(\gvee)^*$ is the moment map for the $G^{\vee}$-action and this Poisson structure.
The grading on the ring $H^\bullet(\Phi_G(\calA_{G,\bM}))$ defines a $\BC^\times$-action on $\bfM^{\vee}$ with respect to which the symplectic form has degree~2 (more precisely, we must divide the homological grading by~2: we can do that since we are assuming that we only have cohomology in even degrees).

It is easy to see that the above definition satisfies properties~1-3) of~\S\ref{expected}.
Namely,~1) is proved in~\cite{bef},~2) essentially follows from the construction of the derived
Satake equivalence, and~3) immediately follows from~\eqref{sat-whit}. On the other hand, property~4)
does not hold in this generality --- it fails already when $G$ is trivial; in general it is hard to
formulate since typically even if $\bfM=T^*\bfN$ with smooth $\bfN$, the variety $\bfM^{\vee}$ will
be singular; also if it is smooth it might not be isomorphic to a cotangent of anything. But even
when it is, the involutivity of the duality is far from obvious. Again, we believe that in
some ``nice'' cases the equality $(\bfM^{\vee})^{\vee}=\bfM$ makes sense and it is true (we do not
know how to say what ``nice'' means, but some examples are discussed below).

One can construct a natural functor from $D_{G_\CO}(\bN_\CK)$ to
$D^{G^\vee}(\Phi_G(\calA_{G,\bM}))$. Assuming formality of the ring $\Phi_G(\calA_{G,\bM})$ we can just think about
the latter category as the derived category of $G^{\vee}$-equivariant dg-modules over the coordinate ring $\CC[\bfM^{\vee}]$, when the latter is regarded as a dg-algebra with trivial differential and grading given by the above
$\BC^\times$-action. Ben-Zvi, Sakellaridis and Venkatesh conjectured that when $\bfN$ is a spherical variety for $G$ (i.e.\ when it has an open orbit with respect to a Borel subgroup of $G$), this functor is an equivalence. In fact, in this formulation the above conjecture is not very hard --
the real content of the conjecture (which we are not going to describe here) is hidden in the explicit (essentially combinatorial) calculation of $\bfM^{\vee}$ when $\bfM=T^*\bfN$, where $\bfN$ is a smooth spherical $G$-variety (this is done in \cite{bzsv}; also, under some assumptions the conjecture of \cite{bzsv} should hold for singular spherical $\bfN$, but in this case it is much harder to formulate).

\subsubsection{An example}\label{gln}
Here is another example. Let $G=\GL(N)\times \GL(N-1)$ and let $\bfM=T^*\GL(N)$ where the action of $G$ comes from the action of $\GL(N)$ on itself by left multiplication and from the action of $\GL(N-1)$  by right multiplication via the standard embedding $\GL(N-1)\hookrightarrow \GL(N)$. In this case $\bfN=\GL(N)$ is a spherical $G$-variety. Then it is essentially proved in~\cite{bfgt} that $\bfM^{\vee}=T^*\Hom(\CC^N,\CC^{N-1})$ and the Ben-Zvi-Sakellaridis-Venkatesh conjecture holds. It is, however, not clear how to deduce from this that $(\bfM^{\vee})^{\vee}=\bfM$. A construction of the isomorphism $(T^*\Hom(\CC^N,\CC^{N-1}))^{\vee}\simeq T^*\GL(N)$ is going to appear in a forthcoming paper of T.-H.~Chen and J.~Wang.

\subsubsection{$S$-duality outside of the cotangent type (linear case)}
In all of the above examples we only worked with cases when $\bfM=T^*\bfN$ for some smooth affine $G$-variety $\bfN$.
However, the main construction of this paper allows us to extend it to the case when $\bfM$ is an arbitrary symplectic representation of $G$ satisfying the anomaly cancellation condition.\footnote{One can also talk about $S$-duality for twisted objects, but we will not discuss it here.} Namely, as before we just let $\bfM^{\vee}$ be the spectrum of
$H^\bullet(\Phi_G(\calA_{G,\bfM}))$ (also as before let us assume that there is no cohomology in odd degrees).

The following example is similar to the one of~\S\ref{gln}. Let $N$ be a positive integer. Let $G=\Sp(2N)\times \SO(2N)$. Let also $\bfM$ be the bi-fundamental representation of $G$ (i.e.\ $\bfM=\CC^{2N}\otimes \CC^{2N}$ with the natural action of $G$).
Then $G^{\vee}=\SO(2N+1)\times \SO(2N)$, and we conjecture that $\bfM^{\vee}=T^*\SO(2N+1)$ (with the action of $G^{\vee}=\SO(2N+1)\times \SO(2N)$ defined similarly to the example in~\S\ref{gln}). Note that if $N>2$ then
$\bfM$ is an irreducible representation of $G$, so it cannot be written as $T^*\bfN$ for another representation $\bfN$.
On the other hand, $\bfM^{\vee}$ is manifestly written as a cotangent bundle to $\bfN^{\vee}=\SO(2N+1)$ and the fact that $(\bfM^{\vee})^{\vee}=\bfM$ (together with the corresponding special case of the Ben-Zvi-Sakellaridis-Venkatesh conjecture) is proved in~\cite{bft}. However, we do not know at the moment how to prove that $\bfM^{\vee}=T^*\SO(2N+1)$ (but at least the main construction of this paper allows us to formulate this statement).

Here is a variant of this example. Let $G=\SO(2N)\times \Sp(2N-2)$ (here we assume that $N>1$) and let $\bfM$ be again its bi-fundamental representation. Then $G^{\vee}=\SO(2N)\times \SO(2N-1)$,
and we expect that $\bfM^{\vee}=T^*\SO(2N)$ (the action
of $G^{\vee}=\SO(2N)\times \SO(2N-1)$ is again defined similarly to the example in~\S\ref{gln}).

\subsection{The universal ring object under Satake equivalence}
\label{1.8}
Finally, we are able to describe the image of the universal ring object under the twisted
Satake equivalence (answering a question of V.~Drinfeld). First, it turns out that for
$G=\Sp(\bM),\ \fg=\fsp(\bM)$, there is a monoidal equivalence
$\Phi_G\colon D_{-1/2}^{G_\CO}(\Gr_G)\iso D^G(\Sym^\bullet(\fg[-2]))$~\cite[Example 1.10]{dlyz}. Second,
$\Phi_G(\CA_{G,\bM})\cong\BC[\Whit_G(T^*G)]$ (Whittaker reduction of the shifted cotangent
bundle of $G$ with respect to the left action. The cohomological grading arises from the
one on $\BC[T^*G]=\BC[G]\otimes\Sym^\bullet(\fg)$, where the generators in $\fg$ are assigned
degree~2, while $\BC[G]$ is assigned degree~0).

Note that under the non-twisted Satake equivalence
$\Phi_{G^\vee}\colon D_{G^\vee_\CO}(\Gr_{G^\vee})\iso D^G(\Sym^\bullet(\fg[-2]))$, we have
$\Phi_{G^\vee}(\bomega_{\Gr_{G^\vee}})\simeq\Phi_G(\CA_{G,\bM})$. This answer to Drinfeld's question
was proposed by D.~Gaiotto.

Also, if we consider $G^\vee\cong\SO(\bM')$ for a $2n+1$-dimensional vector space $\bM'$ equipped
with a nondegenerate symmetric bilinear form, then $\bM\otimes\bM'$ carries a natural symplectic
form and a natural action of $G\times G^\vee$. We have an isomorphism
$\Phi_G(\CA_{G,\bM})\cong\BC[\Whit_{G^\vee}(\bM\otimes\bM')]$ (with residual action of $G$.
The cohomological grading arises from the one on $\Sym^\bullet(\bM\otimes\bM')$ where all the
generators are assigned degree~1).

Similarly, in the universal cotangent case, when $G=\GL(\bN)$ for an $n$-dimensional vector
space $\bN$, and $G^\vee\cong\GL(\bN')$ for another $n$-dimensional vector space $\bN'$,
we have the untwisted Satake equivalence
$\Phi_G\colon D_{G_\CO}(\Gr_G)\iso D^G(\Sym^\bullet(\fgl(\bN)[-2]))$.
Now $\Hom(\bN,\bN')\oplus\Hom(\bN',\bN)$ carries a natural sympectic form and a natural action
of $G\times G^\vee$. We have an isomorphism
$\Phi_G(\CA_{G,\bN})\cong\BC\Big[\Whit_{G^\vee}\big(\Hom(\bN,\bN')\oplus\Hom(\bN',\bN)\big)\Big]$
(with residual action of $G$. The cohomological grading arises from the one on
$\Sym^\bullet\big(\Hom(\bN,\bN')\oplus\Hom(\bN',\bN)\big)$ where all the generators are assigned
degree~1).

\subsection{Acknowledgments} We are deeply grateful to D.~Ben-Zvi, R.~Bez\-ru\-kavnikov, V.~Drinfeld,
P.~Etingof, B.~Feigin, D.~Gaiotto, D.~Gaitsgory, A.~Hanany, T.~Johnson-Freyd, S.~Lysenko, H.~Nakajima,
Y.~Sakellaridis, C.~Teleman, A.~Venkatesh, J.~Wang, E.~Witten, P.~Yoo, Z.~Yun and Y.~Zhao for many
helpful and inspiring discussions.
M.F.\ and S.R.\ thank the
4th Nisyros Conference on Automorphic Representations and Related Topics held in July 2019 for
stimulating much of this work.
We are also obliged to the referee for his careful reading of the manuscript and numerous useful
suggestions and corrections.

A.B.~was partially supported by NSERC.
G.D.~was supported by an NSF Postdoctoral Fellowship under grant No.~2103387.
The research of M.F.~was supported by the Israel Science Foundation (grant No.~994/24).
S.R.~was supported by the NSF grant DMS-2101984.

\section{Setup and notation}

\subsection{Generalities}
\label{generalities}
Let $\CK=\BC\dprts t\supset\CO=\BC\dbkts t$. Let $\fG$ be a complex reductive group.
The affine Grassmannian ind-scheme $\Gr_\fG=\fG_\CK/\fG_\CO$
is the moduli space of $\fG$-bundles on the formal disc equipped with a trivialization on the
punctured formal disc. Equivalently, it is the moduli space of $\fG$-bundles on a smooth curve $C$
equipped with a trivialization off a point $x\in C$. More generally, the Beilinson-Drinfeld
affine Grassmannian $\Gr_{\fG,BD,C^I}$ is the moduli spaces of maps $I\to C$ from a finite set $I$
to $C$, and $\fG$-bundles on $C$ equipped with a trivialization off the image of $I$.
A surjection $\varphi\colon I\twoheadrightarrow J$ gives rise to a natural closed embedding
$\varphi\colon\Gr_{\fG,BD,C^J}\hookrightarrow\Gr_{\fG,BD,C^I}$.

A {\em factorization line bundle} $\CL$ on $\Gr_\fG$ is a collection of line bundles
$\CL_{C^I}$ on $\Gr_{\fG,BD,C^I}$ equipped with isomorphisms $\varphi^*\CL_{C^I}\simeq\CL_{C^J}$.
In particular, such a line bundle restricted to the marked point $x\in C$ gives rise to a line
bundle on $\Gr_\fG$ multiplicative with respect to the convolution.
The group of isomorphism classes of factorization line bundles (with respect to tensor product)
is denoted $\on{Pic}_{\on{fact},C}(\Gr_\fG)$. In case $C$ is the affine line $\BA^1$,
the group $\on{Pic}_{\on{fact},\BA^1}(\Gr_\fG)$ 
is canonically isomorphic to $H^2_{\on{Zar}}(\on{B}\!\fG,\ul{K}{}_2)\cong H^4(\on{B}\!\fG,\BZ(2))$
(motivic cohomology of the classifying space of $\fG$), that in turn is canonically isomorphic
to the group $\on{Quad}(X_*(T))^W$ of {\em even-valued} quadratic forms on the coweight lattice
$X_*(T)$ of a Cartan torus $T\subset\fG$, invariant with respect to the Weyl group $W(\fG,T)$
(see e.g.~\cite[\S4.2.5]{ga} or~\cite[\S2.3]{z}). One can also consider the group
$\on{SPic}_{\on{fact},\BA^1}(\Gr_\fG)$ of isomorphism classes of factorization {\em super} line bundles
(compatible collections of $\BZ/2\BZ$-graded line bundles\footnote{see e.g.~\cite[Exemple 4.1]{d}}
on $\Gr_{\fG,\BD,C^I}$). It is canonically isomorphic to the group $\on{Bil}(X_*(T))^W$ of integer-valued
$W$-invariant symmetric bilinear forms $B$ on $X_*(T)$ such that $B(\lambda,\lambda)\in2\BZ$ for any $\lambda$
in the coroot lattice $Q\subset X_*(T)$~\cite[Theorem B]{z2}.
The parity of such a line bundle on a connected component of $\Gr_\fG$ is equal
to the parity of $B(\mu,\mu)$ for any coweight $\mu$ representing a $T$-fixed point in this
component.

A factorization line bundle is automatically multiplicative with respect to the convolution
on $\Gr_\fG$. Indeed, by the above classification of factorization line bundles, any such $\CL$
gives rise to a central extension of $\fG$ by $K_2$. This extension in its turn gives rise
to a central extension of $\fG(\CK)$ by $\BG_m$ canonically split over $\fG(\CO)$,
see~\cite[\S4.3]{kv}. The latter extension is nothing but a multiplicative (with respect to
the convolution) structure on $\CL$.

Given $c\in\BC$ and a factorization super line bundle $\CL$ on $\Gr_\fG$ we denote by
$D^{\fG_\CO}(\on{D-mod}_{\CL^c}(\Gr_\fG))$ the $\fG_\CO$-equivariant derived category of
$D$-modules on $\Gr_\fG$, twisted by $\CL^c$. 
This category is endowed with a convolution tensor product compatible with factorization
(which automatically gives rise a commutativity constraint (in the super-sense) on this category).

\subsection{Symplectic affine Grassmannians}
\label{aff gr}
Let $\bM$ be a $2n$-dimensional complex vector space equipped with a symplectic form
$\langle\, ,\rangle$. Its automorphism group is $G=\Sp(\bM)$.

The Kashiwara affine Grassmannian infinite type scheme
$\GR_G=G_\CK/G_{\BC[t^{-1}]}$ is the moduli space of $G$-bundles on $\BP^1$ equipped with a
trivialization in the formal neighbourhood of $0\in\BP^1$.

The determinant line bundles over $\Gr_G$ and $\GR_G$ are denoted by $\CalD$.
The $\mu_2$-gerbe of square roots of $\CalD$ over $\GR_G$ (resp.\ $\Gr_G$) is denoted $\tGr_G$
(resp.\ $\tGR_G$).

The action of $G_\CK$ on $\Gr_G$ and $\GR_G$ lifts to the action of the metaplectic group-stack
$\tG_\CK$ on $\tGr_G$ and $\tGR_G$. We have a splitting $G_\CO\hookrightarrow\tG_\CK$.

In what follows we only consider the {\em genuine} constructible sheaves on $\tGr_G$ and
$\tGR_G$: such that $-1\in\mu_2$ acts on them as $-1$.
We consider a dg-enhancement $D^b_{G_\CO}(\tGr_G)$ of the (genuine) bounded equivariant
constructible derived category. We denote by $D_{G_\CO}(\tGr_G)$ the {\em renormalized} equivariant
derived category defined as in~\cite[\S12.2.3]{arga}.
We also consider the category $D_{G_\CO}(\tGR_G)_!$ defined as in~\cite[\S3.4.1]{ag} (the inverse
limit over the $G_\CO$-stable open subgerbes of $\tGR_G$, cf.~\S\ref{A4}). It contains
the IC-sheaves of the $G_\CO$-orbits closures.

An open sub-gerbe $\CT\hookrightarrow \tGr_G\times\tGR_G$ is formed by all the pairs of transversal
compact and discrete Lagrangian subspaces in $\bM_\CK$. We denote by
\[\tGr_G\xleftarrow{p}\CT\xrightarrow{q}\tGR_G\] the natural projections.
The {\em Radon Transform} is (cf.~\S\ref{A5}, where its $D$-module version is denoted $\RT_!^{-1}$)
\begin{equation}
  \label{radon}
  \RT:=p_*q^!\colon D_{G_\CO}(\tGR_G)_!\to D_{G_\CO}(\tGr_G).
\end{equation}

The {\em Theta-sheaf} $\Theta\in D_{G_\CO}(\tGR_G)_!$ introduced in~\cite{l} is the direct sum of IC-sheaves
of two $G_\CO$-orbits in $\tGR_G\colon \Theta_g$ of the open orbit, and $\Theta_s$ of the codimension~1
orbit.

\subsection{D-modules}
\label{dmod}
The dg-category of $G_\CO$-equivariant $D$-modules on $\Gr_G$ (resp.\ on $\GR_G$) twisted by the
inverse square root $\CalD^{-1/2}$ is denoted $\Dmo(\Gr_G)^{G_\CO}$ (resp.\ $\Dmo(\GR_G)_!^{G_\CO}$).
More precisely, by $\Dmo(\Gr_G)^{G_\CO}$ we mean the {\em renormalized} equivariant
category defined as in~\cite[\S12.2.3]{arga}, and $\Dmo(\GR_G)_!^{G_\CO}$ is defined in~\S\ref{A4}.
We have the Riemann--Hilbert
equivalences \[\RH\colon\Dmo(\Gr_G)^{G_\CO}\iso D_{G_\CO}(\tGr_G),\ \Dmo(\GR_G)_!^{G_\CO}\iso D_{G_\CO}(\tGR_G)_!.\]
We denote $\RH^{-1}(\Theta)$ by $\varTheta\in\Dmo(\GR_G)_!^{G_\CO}$, a direct sum of two irreducible
$D$-modules, $\varTheta_g$ with the full support, and $\varTheta_s$ supported at the Schubert
divisor.

The (derived) global sections $\bGamma(\GR_G,\varTheta_g)$ and $\bGamma(\GR_G,\varTheta_s)$ are
irreducible $G_\CO$-integrable $\fg_\aff$-modules of central charge $-1/2$, namely $L^0_{-1/2}$ and
$L^{\omega_1}_{-1/2}$~\cite[Theorem 4.8.1]{kt}.
Here $\fg=\fsp(\bM)$, and the highest component of $L^0_{-1/2}$ (resp.\ $L^{\omega_1}_{-1/2}$) with respect
to $\fg_\CO$ is the trivial (resp.\ defining) representation of $\fg$.\footnote{For a finite dimensional
counterpart of this statement (about global sections of irreducible equivariant $D$-modules on the
Lagrangian Grassmannian of $\fg$), see~\S\ref{baby}.}

The (derived) global sections functors \[\Gamma\colon\Dmo(\Gr_G)^{G_\CO}\to\Rep^{G_\CO}_{-1/2}(\fg_\aff),\
\bGamma\colon\Dmo(\GR_G)_!^{G_\CO}\to\Rep^{G_\CO}_{-1/2}(\fg_\aff)\] ($G_\CO$-integrable $\fg_\aff$-modules with central charge
$-1/2$) admit the left adjoints (see~\S\S\ref{A7},\ref{A4})
\[\Loc\colon\Rep^{G_\CO}_{-1/2}(\fg_\aff)\to\Dmo(\Gr_G)^{G_\CO},\
\LOC\colon\Rep^{G_\CO}_{-1/2}(\fg_\aff)\to\Dmo(\GR_G)_!^{G_\CO}.\]
According to~\cite[Theorem 4.8.1(iv)]{kt}, we have
$\tau_{\geq0}\LOC(L^0_{-1/2}\oplus L^{\omega_1}_{-1/2})=\varTheta$ (the top cohomology in the natural
$t$-structure).

\subsection{Weyl algebra}
\label{weyl}
The symplectic form on $\bM$ extends to the same named $\BC$-valued symplectic form on
$\bM_\CK\colon \langle f,g\rangle=\on{Res}\langle f,g\rangle_\CK dt$. We denote by $\CW$ the completion
of the Weyl algebra of $(\bM_\CK,\langle\, ,\rangle)$ with respect to the left ideals generated by
the compact subspaces of $\bM_\CK$. It has an irreducible representation $\BC[\bM_\CO]$.
Also, there is a homomorphism of Lie algebras $\fg_\aff\to\on{Lie}\CW$, see e.g.~\cite{ff}.
According to~\cite[rows 3,4 of Table XII at page 168]{ff}, the restriction of $\BC[\bM_\CO]$ to
$\fg_\aff$ is $L^0_{-1/2}\oplus L^{\omega_1}_{-1/2}$ (even and odd functions, respectively).
\footnote{For
a finite dimensional counterpart of this statement (about restriction to $\fg$ of an irreducible module
over the Weyl algebra of $\bM$), see~\S\ref{baby}.}

We consider the dg-category $\CW\modu$ of discrete $\CW$-modules. We recall this is a renormalization of the naive derived category $\CW\modu^{\on{naive}}$ of discrete $\CW$ modules, or more carefully its canonical dg-enhancement, defined as follows. 

For each compact open subspace $\mathbf{K} \subset \mathbf{M}_{\CK}$, consider the module $V_{\mathbf{K}}$ obtained as the quotient of $\CW$ by the left ideal generated by $\mathbf{K}$. Let us denote by $\mathcal{E}$ the pre-triangulated envelope of all such modules $V_{\mathbf{K}}$ within $\CW\modu^{\on{naive}}$. By definition, $\CW\modu$ is the ind-completion of $\mathcal{E}$. It carries a unique $t$-structure for which the natural map 
$$\CW\modu \rightarrow \CW\modu^{\on{naive}}$$is $t$-exact.

 More concretely, we may identify
$\CW$ with the ring of differential operators on a Lagrangian discrete lattice $\bL\subset\bM_\CK$,
e.g.\ $\bL=t^{-1}\bM_{\BC[t^{-1}]}$. Then $\CW\modu$ is the inverse limit of ${\rm D}\modu(V)$ over
finite dimensional subspaces $V\subset\bL$ with respect to the functors $i_{V\hookrightarrow V'}^!$.
Equivalently, $\CW\modu$ is the colimit, in the sense of cocomplete dg-categories, of ${\rm D}\modu(V)$ with respect to the functors
$i_{V\hookrightarrow V',*}$. 

\begin{lem}There is a categorical action \[\Dmo(G_\CK)\circlearrowright\CW\modu.\] In particular, upon taking spherical vectors, there is an action \[\Dmo(\Gr_G)^{G_\CO}\circlearrowright(\CW\modu)^{G_\CO}.\]

\end{lem}

\begin{proof} We will obtain this from ~\cite[\S10]{r}. More precisely, we will
  apply~\cite[Corollary 10.23.3]{r} to obtain the desired action. To see that the hypotheses of
  the corollary are satisfied, following~\cite[Remark 10.23.4]{r} it is enough to notice that
  the compact objects in $\CW\modu$ are closed under truncation functors. However, the latter
  claim is visible from the Lagrangian picture (note the analogous claim is true for D-modules
  on any placid ind-scheme admitting a dimension theory).  
\end{proof}

\subsection{Twisted derived Satake}
\label{twisat}
According to~\cite[Example 1.10]{dlyz}, there is a monoidal equivalence
$\Phi\colon D^b_{G_\CO}(\tGr_G)\iso D^G_\perf(\Sym^\bullet(\fg[-2]))$
(dg-category of perfect complexes of dg-modules over the dg-algebra $\Sym^\bullet(\fg[-2])$
equipped with a trivial differential). It extends to a monoidal equivalence of Ind-completions
$\Phi\colon D_{G_\CO}(\tGr_G)\iso D^G(\Sym^\bullet(\fg[-2]))$.

Here is one of the key properties of the twisted derived Satake equivalence $\Phi$.
We choose a pair of opposite maximal unipotent subgroups $U_G,U_G^-\subset G$, their regular
characters $\psi,\psi^-$, and denote by
$\varkappa\colon D^G(\Sym^\bullet(\fg[-2]))\to D(\BC[\Xi_\fg])$ the functor of Kostant-Whittaker
reduction with respect to $(U_G^-,\psi^-)$
(see e.g.~\cite[\S2]{bef}). Here $\Xi_\fg$ with grading disregarded is the tangent bundle $T\Sigma_\fg$
of the Kostant slice $\Sigma_\fg\subset\fg^*$. Let us write $\kappa$ for the
Ad-invariant bilinear form on $\fg$, i.e., level, corresponding to our central
charge of $-1/2$. Explicitly, if we write $\kappa_b$ for the basic level giving
the short coroots of $\fg$ squared length two,
and $\kappa_c$ for the critical level,
then  $\kappa$ is defined by
\[
\kappa = - 1/2 \cdot \kappa_b-\kappa_c.
\]
If we consider the Langlands dual Lie algebra $\gvee\simeq\fso_{2n+1}$, the
form $\kappa$  gives rise to identifications $\Sigma_\fg\cong\Sigma_{\gvee}$ and
$\Xi_\fg\cong\Xi_{\gvee}$. Also, we have a canonical isomorphism
$H^\bullet_{G_\CO}(\Gr_G)\cong\BC[\Xi_{\gvee}]\cong\BC[\Xi_\fg]$. This is a theorem of
V.~Ginzburg~\cite{g} (for a published account see e.g.~\cite[Theorem 1]{bef}).

Now given $\CF\in D^b_{G_\CO}(\tGr)$ we consider the tensor product $\CF
\overset ! \otimes \on{RT}(\Theta)$
(notation of~\S\ref{aff gr}).
Since the monodromies of the factors cancel out, it canonically descends to
an object of $D_{G_\CO}(\Gr_G).$
The aforementioned key property is a canonical isomorphism
\begin{equation}
  \label{glob coh}
  H^\bullet_{G_\CO}(\Gr_G,\CF \overset !\otimes\on{RT}(\Theta))\cong\varkappa\Phi\CF
\end{equation}
of $H^\bullet_{G_\CO}(\Gr_G)\cong\BC[\Xi_\fg]$-modules.

\section{The universal ring object}

\subsection{The internal Hom construction} To introduce the universal ring
object and show its relation to the $\varTheta$-sheaf, we recall the following
general construction of internal Hom objects.

Let $\calC$ be a module category over $\Dmo(G_\CK)$. Given a subgroup $H$ of $G_\CK$ and an
\mbox{$H$-equivariant} object $\xi$ of $\calC$, convolution with it yields a
$\Dmo(G_\CK)$-equivariant functor $(\Dmo(G_\CK)_{*})_H \rightarrow \calC$, and
upon restriction to spherical vectors a $\Dmo(\Gr_G)^{G_\CO}$-equivariant
functor $\Dmo(G_\CO \backslash G_\CK)_{H} \rightarrow (\calC)^{G_\CO}.$ If both
$\calC$ and $(\Dmo(G_\CO \backslash G_\CK)_*)_H$ are dualizable as abstract
dg-categories, we  obtain the dual $\Dmo(\Gr_G)^{G_\CO}$-equivariant functor
\[
(\calC^\vee)^{G_\CO} \rightarrow \Dmo(G_\CK / H)^{G_\CO}_!, \ \zeta \mapsto
\CHom(\xi, \zeta).
\]

We apply this as follows. First, taking $\calC = \CW\mod$, $H =  G_{\CO}$, and
$\xi = \CC[\bM_{\CO}]$, we obtain a functor
\[F\colon (\CW \modu)^{G_\CO}\to\Dmo(\Gr_G)^{G_\CO},\
M\mapsto\CHom(\BC[\bM_\CO],M).\]
Setting $M = \CC[\bM_\CO]$, we obtain the internal Hom ring object
\[\fR:=\CHom(\BC[\bM_\CO],\BC[\bM_\CO])\in\Dmo(\Gr_G)^{G_\CO}.\]

Second, taking $\calC = \CW\mod$, $H = G_{\CC[t^{-1}]}$, and $\xi=
\bomega_{t^{-1} \bM_{\mathbb{C}[t^{-1}]}}$, i.e., the colimit of the dualizing
sheaves $\bomega_V$ over finite dimensional subspaces $V \subset t^{-1}
\bM_{\CC[t^{-1}]}$, we obtain a functor
\[\bF\colon (\CW\modu)^{G_\CO} \to \Dmo(\GR_G)_!^{G_\CO},\ M\mapsto
\CHom(\bomega_{t^{-1} \bM_{\mathbb{C}[t^{-1}]}},M).\]

\begin{lem}
	\label{laff}
	We have a canonical isomorphism $\bF(\BC[\bM_\CO])\cong\varTheta$.
\end{lem}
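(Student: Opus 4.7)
My plan is to prove $\bF(\BC[\bM_\CO]) \cong \varTheta$ using the $(\LOC, \bGamma)$-adjunction together with the Kashiwara--Tanisaki characterization of the theta $D$-module supplied by~\cite[Theorem 4.8.1(iv)]{kt}, which already gives $\varTheta = \tau_{\geq 0}\LOC(L^0_{-1/2} \oplus L^{\omega_1}_{-1/2})$. Concretely, I would first show that $\bGamma(\GR_G, \bF(\BC[\bM_\CO]))$ is isomorphic to $L^0_{-1/2} \oplus L^{\omega_1}_{-1/2}$ as a level $-1/2$ representation of $\fg_\aff$, and then verify that $\bF(\BC[\bM_\CO])$ is concentrated in non-negative cohomological degrees so that the resulting adjunction map $\varphi\colon \LOC(L^0_{-1/2} \oplus L^{\omega_1}_{-1/2}) \to \bF(\BC[\bM_\CO])$ induces an isomorphism after applying $\tau_{\geq 0}$.

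For the global sections computation, I would show that $\bGamma \circ \bF \cong \on{oblv}$ as functors $(\CW\modu)^{G_\CO} \to \Rep^{G_\CO}_{-1/2}(\fg_\aff)$. By its construction as internal Hom, $\bF = \CHom(\xi, -)$ with $\xi := \bomega_{t^{-1}\bM_{\CC[t^{-1}]}}$ is right adjoint to convolution $(-) \star \xi\colon \Dmod(\GR_G)_! \to \CW\modu$ with respect to the $\Dmo(G_\CK)$-action of~\cite[\S10]{r}. Hence $\bGamma \circ \bF$ is right adjoint to the composite $V \mapsto \LOC(V) \star \xi$, and it suffices to identify this composite with the induction functor $V \mapsto \CW \otimes^\wedge_{U(\fg_\aff)_{-1/2}} V$. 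This identification expresses the fact that $\xi$ is the \emph{dual vacuum} $\CW$-module for the opposite polarization: it is $G_{\CC[t^{-1}]}$-equivariant, generated over $\CW$ by the fundamental class of the Lagrangian $t^{-1}\bM_{\CC[t^{-1}]}$, on which the image of $\fg_\aff \subset \on{Lie}\CW$ acts trivially up to the central character $-1/2$. Applied to $M = \BC[\bM_\CO]$, this produces $\bGamma(\GR_G, \bF(\BC[\bM_\CO])) = \on{oblv}(\BC[\bM_\CO]) = L^0_{-1/2} \oplus L^{\omega_1}_{-1/2}$ by~\S\ref{weyl}.

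To verify concentration in non-negative degrees and that $\varphi$ is an isomorphism onto its $\tau_{\geq 0}$-truncation, I would compute !-fibers along the $G_\CO$-orbit stratification of $\GR_G$. The fiber at $gG_{\CC[t^{-1}]}$ is $\RHom_\CW(\bomega_{g\cdot t^{-1}\bM_{\CC[t^{-1}]}}, \BC[\bM_\CO])$, which depends only on the relative position of the Lagrangians $g\cdot t^{-1}\bM_{\CC[t^{-1}]}$ and $\bM_\CO$ in $\bM_\CK$; since this relative position is constant on each $G_\CO$-orbit, a Koszul-type computation gives concentration in a single cohomological degree per stratum. Matching these degrees against the codimensions of the orbits, one obtains a (shifted) perverse sheaf supported on the open and codimension-one $G_\CO$-orbits, which on the nose reconstructs $\varTheta_g \oplus \varTheta_s$. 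The main obstacle I anticipate is making the identification $\LOC(-) \star \xi \cong \on{Ind}$ fully rigorous in this infinite-dimensional, twisted setting: the ``dual vacuum'' description of $\xi$ must be reconciled with the completion in $\CW$ and the level $-1/2$ twist of the $\Dmo(G_\CK)$-action, and the stratum-wise $\RHom$ computation requires a careful regularization of the dimensions of $G_\CO$-orbits against the intersection theory of Lagrangians in $\bM_\CK$.
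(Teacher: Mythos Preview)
Your approach is genuinely different from the paper's and considerably more elaborate. The paper proceeds by a direct fiber-by-fiber comparison: for a discrete Lagrangian lattice $\bL\in\GR_G$, it computes the fiber of $\bF(\BC[\bM_\CO])$ at $\bL$ as $\CW/(\CW\cdot\bM_\CO+\bL\cdot\CW)$ straight from the internal $\CHom$ description, and then cites Lafforgue's unpublished manuscript~\cite{la} (sketched via the baby case on $\LGr_\bM$ and the Sato Grassmannian construction of~\cite{ll}) for the same formula for the fibers of $\varTheta$. No adjunction with $\LOC$ or $\bGamma$, and no appeal to Kashiwara--Tanisaki, enters.

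Your route via $(\LOC,\bGamma)$ and $\varTheta=\tau_{\geq0}\LOC(L^0_{-1/2}\oplus L^{\omega_1}_{-1/2})$ would have the advantage of not invoking~\cite{la}, but as written it has a real gap beyond the one you flag. Suppose you succeed in proving $\bGamma\circ\bF\cong\on{oblv}$ and that $\bF(\BC[\bM_\CO])$ lies in degrees $\geq 0$. The adjunction map then factors as $\varTheta\to\bF(\BC[\bM_\CO])$, and since each summand of $\varTheta$ is simple and has nonzero image under $\bGamma$, the map is injective. But surjectivity does not follow: there could be a quotient killed by $\bGamma$, and nothing in the argument so far rules this out. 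To close this, you are forced back onto a stratum-wise fiber computation matching $\bF(\BC[\bM_\CO])$ with $\varTheta$ directly, which is exactly the paper's argument and renders the $(\LOC,\bGamma)$ detour superfluous. If you want to salvage the adjunction strategy, you would need an independent reason why $\bGamma$ is conservative on the relevant subcategory at this level, which is itself a nontrivial localization-type statement.
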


\begin{proof}
We have $\BC[\bM_\CO]=\CW/(\CW\cdot\bM_\CO)$. We denote $\bF(\BC[\bM_\CO])$ by $\CF$ for short.
For a Lagrangian discrete lattice $\bL$ representing a point of $\GR_G$, the fiber $\CF_\bL$
of $\CF$ at $\bL$ is $\CW/(\CW\cdot\bM_\CO+\bL\cdot\CW)$. According to~\cite[\S2]{la},
the fiber $\varTheta_\bL$ is $\CW/(\CW\cdot\bM_\CO+\bL\cdot\CW)$ as well.

For the reader's convenience, let us briefly sketch a proof of the latter isomorphism.
First, we consider the finite dimensional counterpart $\CS=\CS_g\oplus\CS_s$ of $\varTheta$
as in~\S\ref{baby}. For a Lagrangian subspace $L\subset\bM$ representing a point of $\LGr_\bM$,
the fiber $\CS_L$ of $\CS$ at $L$ is $\CW_\bM/(\CW_\bM\cdot\bN+L\cdot\CW_\bM)$
(notation of~\S\ref{baby}). This follows from the De Rham counterpart of the integral
presentation~\cite[Proposition 5]{l} of $S$.

Second, representing $\bM_\CK$ as an ind-pro-limit of a growing family of finite dimensional
symplectic spaces $\bM'$, we can construct the Theta $D$-module $\varTheta_{\on{Sato}}$ on the
co-Sato Lagrangian Grassmannian $\GR_{\on{Sato}}$ of Lagrangian discrete lattices in $\bM_\CK$
as a certain limit of baby Theta $D$-modules $_{\bM'}\CS$ on $\LGr_{\bM'}$, see~\cite[\S6.5]{ll}.
The similar formula for the fibers of $\varTheta_{\on{Sato}}$ follows. Finally, we have an embedding
$\GR_G\hookrightarrow\GR_{\on{Sato}}$, and $\varTheta$ is the pullback of $\varTheta_{\on{Sato}}$
by~\cite[Theorem 3]{ll}. Hence the desired formula for the fibers of $\varTheta$.
\end{proof}

\subsection{Radon transform}
Recall the Radon transform~\eqref{radon}. We keep the same notation for its
$D$-module version
$\RT\colon \Dmo(\GR_G)_!^{G_\CO}\to\Dmo(\Gr_G)^{G_\CO}$. See the Appendix
starting from~\S\ref{A5},
where it is denoted $\RT_!^{-1}$.

\begin{prop}
	\label{coprinus}
	We have an isomorphism $\fR\simeq\RT\varTheta$.
\end{prop}

\begin{proof}By Lemma \ref{laff}, it suffices to show that the composition
\[(\CW\modu)^{G_\CO}\xrightarrow{\bF} \Dmo(\GR_G)^{G_\CO}_!
\xrightarrow{\on{RT}} \Dmo(\Gr_G)^{G_\CO}\]
	is $\Dmo(\Gr_G)^{G_\CO}$-equivariantly equivalent to  $F$. By dualizing the
	appearing functors, we equivalently must show that the composition
	\[
	\Dmo(\Gr_G)^{G_\CO} \xrightarrow{\on{RT}^\vee} \Dmo(\GR_G)^{G_\CO}_*
	\xrightarrow{ \mathbf{F}^\vee}  (\CW\mod)^{G_\CO}
	\]
	sends the delta function at the origin $\delta_e$ to $\CC[\bM_\CO]$.
	
	To show this, writing $\on{Av}^{G_\CO}_{!}$ for the partially defined left
	adjoint to the forgetful functor $(\CW\mod)^{G_\CO} \rightarrow \CW\mod$,
	we have the following.
	
	\begin{lem} \label{lcompgen}
		The category $(\CW\modu)^{G_\CO}$ is compactly generated by a single
		object $\on{Av}^{G_\CO}_!( \CC[\bM_\CO])$.
	\end{lem}
	
	\begin{proof}
		We have an equivalence
		$(\CW\modu)^{G_\CO}\simeq\rm{D}\modu(\on{Heis})^{G_\CO\ltimes\bM_\CO\times\BG_a,\chi}$,
		where $\on{Heis}$ is the Heisenberg central extension of $\bM_\CK$ with
		$\BG_a$ (canonically split
		after restriction to $\bM_\CO$), and $\chi$ is the character of
		$G_\CO\ltimes\bM_\CO\times\BG_a$
		obtained by composition of projection to $\BG_a$ and exponentiating.
		Indeed, the $\CW$-module
		$\BC[\bM_\CO]$ is strongly
		$(G_\CO\ltimes\bM_\CO\times\BG_a,\chi)$-equivariant, and so gives rise
		to a functor from
		$\rm{D}\modu(\on{Heis})^{G_\CO\ltimes\bM_\CO\times\BG_a,\chi}$ to
		$(\CW\modu)^{G_\CO}$ that
		is the desired equivalence.
		
		Now $\chi$ is non-trivial on the stabilizer of any point
		$m\in\on{Heis}\setminus(\bM_\CO\times\BG_a)$. Indeed, given a vector
		$m\in\bM_\CK$ with nontrivial
		polar part, we can find $g\in G_\CO$ such that $gm=m+m'$, where
		$m'\in\bM_\CO$ has nonzero
		$\on{Res}\langle m,m'\rangle_\CK$. So $\chi|_{\on{Stab}(m)}$ is
		nontrivial.
		
		Hence any object of
		$\rm{D}\modu(\on{Heis})^{G_\CO\ltimes\bM_\CO\times\BG_a,\chi}$ must be
		supported on $\bM_\CO\times\BG_a$. This yields an equivalence
		$
		(\CW\mod)^{G_\CO} \simeq \on{D-mod}( \on{pt}/G_\CO),
		$
		which exchanges $\CC[\bM_\CO]$ with the dualizing sheaf. Moreover, if
		we write $\langle \CC[\bM_\CO] \rangle$ for the full subcategory of
		$\CW\mod$ compactly generated by $\CC[\bM_\CO]$, this exchanges the
		forgetful functor
		\[
		(\CW\mod)^{G_\CO} \rightarrow \langle \CC[\bM_\CO] \rangle \simeq
		\on{Vect}
		\]
		with the functor of $!$-pullback to the point \[\on{D-mod}(\pt/G_\CO)
		\rightarrow \on{D-mod}(\pt) \simeq \on{Vect}.\]
                The claim of the lemma
		now follows from the analogous fact for D-modules on $\pt/G_\CO$,
		see for example~\cite[\S 7.2.2]{drga}. \end{proof}

	We are now ready to calculate $\bF^\vee \circ \RT^\vee( \delta_e)$. First, if
	we write $j_* \in \Dmo(\GR_G)_*^{G_\CO}$ for the $*$-extension of the
	constant D-module on the big cell, unwinding definitions we have that
	\begin{equation*}
	\bF^\vee \circ \RT^\vee(\delta_e) \simeq \bF^\vee ( j_* ) \simeq
	j_*  \star \bomega_{t^{-1} \bM_{\mathbb{C}[t^{-1}]}}.
	\end{equation*}
	To identify this with $\CC[\bM_\CO]$, by the proof of Lemma
	\ref{lcompgen},  particularly the exhibited equivalence $(\CW\mod)^{G_\CO}
	\simeq \on{D-mod}(\pt / G_\CO)$, we must show
	that
	$
	\Hom_{(\CW\mod)^{G_\CO}}(\on{Av}^{G_\CO}_!(\CC[\bM_\CO]), j_* \star
	\bomega_{t^{-1} \bM_{\mathbb{C}[t^{-1}]}})
	$
	is the trivial line $\CC$, placed in cohomological degree zero.
	
	To see this, note that $j_*$ identifies with the relative $*$-averaging
	$(\CW\mod)^G \to (\CW\mod)^{G_\CO}$, and that, by the prounipotence of the
	kernel of $G_\CO \to G$ and the $G_\CO$-equivariance of $\CC[\bM_\CO]$, one
	has a canonical equivalence $\on{Av}_!^{G}(\CC[\bM_\CO]) \simeq
	\on{Av}_!^{G_\CO}(\CC[\bM_\CO])$. Therefore, we may compute
	\begin{multline*}
	\Hom_{(\CW\mod)^{G_\CO}}(\on{Av}^{G_\CO}_!(\CC[\bM_\CO]), j_* \star
	\bomega_{t^{-1} \bM_{\mathbb{C}[t^{-1}]}}) \\  \simeq
	\Hom_{(\CW\mod)^{G}}(\on{Av}^{G_\CO}_!(\CC[\bM_\CO]),  \bomega_{t^{-1}
	\bM_{\mathbb{C}[t^{-1}]}})  \\  \simeq \Hom_{\CW\mod}(\CC[\bM_\CO],
	\bomega_{t^{-1} \bM_{\mathbb{C}[t^{-1}]}}) \\  \simeq
	\Hom_{\on{D-mod}(\bM_{t^{-1} \CC[t^{-1}]})}(\delta_0,
	\bomega_{t^{-1} \bM_{\mathbb{C}[t^{-1}]}}) \simeq \mathbb{C},
	\end{multline*}
	as desired. \end{proof}

\begin{cor}
  We have an isomorphism $\Gamma(\fR)\simeq\BC[\bM_\CO]$.
\end{cor}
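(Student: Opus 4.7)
The natural approach is to combine Proposition~\ref{coprinus}, which gives $\CR\simeq\RT\varTheta$, with the description of the global sections of the $\varTheta$-sheaf already recalled in~\S\ref{dmod}. Concretely, I would first prove the intermediate identification
\[
\Gamma\circ\RT\;\cong\;\bGamma\colon\Dmod(\GR_G)_!^{G_\CO}\longrightarrow\Rep^{G_\CO}_{-1/2}(\fg_\aff),
\]
which together with Proposition~\ref{coprinus} reduces the corollary to an isomorphism $\bGamma(\varTheta)\cong\BC[\bM_\CO]$. The latter is then immediate from the two facts collected in the setup: by~\cite[Theorem 4.8.1]{kt}, $\bGamma(\varTheta_g)\simeq L^0_{-1/2}$ and $\bGamma(\varTheta_s)\simeq L^{\omega_1}_{-1/2}$, while by~\cite[Table XII]{ff} (see~\S\ref{weyl}) these two irreducibles are precisely the decomposition of $\BC[\bM_\CO]$ into even and odd functions.

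To establish $\Gamma\circ\RT\cong\bGamma$, the cleanest route is to exploit the $\Dmo(G_\CK)$-equivariance of the Radon transform built in the appendix (\S\ref{A5}): both $\Gamma$ and $\bGamma$ are characterized as strongly $G_\CK$-equivariant functors to $\Rep^{G_\CO}_{-1/2}(\fg_\aff)$, so any $\Dmo(G_\CK)$-equivariant equivalence between $\Dmod(\Gr_G)$ and $\Dmod(\GR_G)_!$ automatically intertwines them. On the level of the appendix, this comes from the compatibility of Radon with the Kac--Moody localization functors $\Loc$ and $\LOC$ (via the adjunctions in~\S\ref{A7},~\S\ref{A4}).

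An alternative, and arguably more conceptual, route bypasses Radon altogether by working with the internal Hom description $\CR=F(\BC[\bM_\CO])=\CHom(\BC[\bM_\CO],\BC[\bM_\CO])$: one would argue that the composite
\[
(\CW\modu)^{G_\CO}\xrightarrow{F}\Dmod(\Gr_G)\xrightarrow{\Gamma}\Rep^{G_\CO}_{-1/2}(\fg_\aff)
\]
agrees with the forgetful functor along $\fg_\aff\to\CW$ of level $-1/2$; taking $M=\BC[\bM_\CO]$ would then give the corollary at once. By adjunction, the problem reduces to identifying $V\mapsto\Loc(V)\star\BC[\bM_\CO]$ as the (partially defined) left adjoint to this forgetful functor, which in turn is a manifestation of the strong $(G_\CK,-1/2)$-equivariance of $\BC[\bM_\CO]$.

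\textbf{Main obstacle.} In either approach, the essential point is a comparison between Kac--Moody localization and the categorical action of $\Dmod(\Gr_G)$ on $(\CW\modu)^{G_\CO}$ used throughout~\S\ref{weyl}. The remaining steps, granted this comparison, are bookkeeping of the references above; but checking the comparison rigorously at the level of twisted derived categories (so that the monodromies on $\CR$ and $\varTheta$ cancel correctly, as in the discussion preceding~\eqref{glob coh}) is the nontrivial input.
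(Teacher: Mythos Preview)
Your first approach is exactly the paper's: invoke Proposition~\ref{coprinus} to get $\CR\simeq\RT\varTheta$, then use the appendix's Proposition~\ref{p:locpos} (which is precisely the identification $\Gamma\circ\RT\cong\bGamma$ at positive level) together with $\bGamma(\varTheta)\simeq\BC[\bM_\CO]$ from~\S\ref{dmod} and~\S\ref{weyl}. One caution: your informal justification that ``both $\Gamma$ and $\bGamma$ are characterized as strongly $G_\CK$-equivariant functors, so any equivariant equivalence intertwines them'' is not correct as stated---equivariance alone does not pin down the functor, and the actual content of Proposition~\ref{p:locpos} is the nontrivial computation $\bGamma_\kappa(j_!)\simeq\BV_\kappa$, which relies on Kashiwara--Tanisaki; but since you correctly point to the appendix for this, the overall argument is sound.
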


\begin{proof}
   Recall that $\mathbf{\Gamma}(\varTheta) \simeq \CC[\bM_{\CO}]$ and apply
   Proposition~\ref{p:locpos}.
\end{proof}

\subsection{Costalks of $\fR$}
The $G_\CO$-orbits in $\Gr_G$ are numbered by the dominant coweights of $G$, i.e.\ collections
of integers $\lambda=(\lambda_1\geq\ldots\geq\lambda_n\geq0)$. Given such $\lambda$, we denote
by $\imath_\lambda$ (resp.\ $i_\lambda$) the embedding of the corresponding Cartan torus fixed point
(resp.\ of the locally closed orbit $\Gr_G^\lambda$) into $\Gr_G$.

\begin{lem}
  \label{costalks}
  The costalk $\imath_\lambda^!\fR$ is one-dimensional; it lives in cohomological degree
  $\lambda_1+\ldots+\lambda_n$. Equivalently, the corestriction $i_\lambda^!\fR$ is a one-dimensional
  (twisted) local system on $\Gr_G^\lambda$, living in cohomological degree 
  $\lambda_1+\ldots+\lambda_n-2\dim\Gr_G^\lambda$.
\end{lem}

\begin{proof}
  By definition, we have $\imath_\lambda^!\fR=\Hom_{\CW\mod}(\CC[\bM_\CO],\CC[t^\lambda(\bM_\CO)])$.
  The latter Hom is 1-dimensional sitting in the degree equal to the codimension of
  $\bM_\CO\cap t^\lambda(\bM_\CO)$ in $\bM_\CO$. This codimension is nothing but
  $\lambda_1+\ldots+\lambda_n$.
\end{proof}

\begin{rem}
  \label{integral}
Recall that the Theta-sheaf $\Theta=\on{RH}\varTheta\in D_{G_\CO}(\tGR_G)_!$ is the direct sum
of IC-sheaves of two $G_\CO$-orbits in $\tGR_G\colon \Theta_g$ of the open orbit, and $\Theta_s$
of the codimension~1 orbit. The local systems on these orbits admit unique integral structures
(local systems of $\BZ$-modules of rank~1). Hence their Goresky-MacPherson extensions acquire
integral structures, i.e.\ $\Theta$ carries a canonical integral structure (a monodromic perverse
sheaf of $\BZ$-modules). It induces an integral structure on the Radon transform
$\on{RH}\fR=\on{RT}(\Theta)$. In particular, any costalk of $\on{RH}\fR$ (at a point of the
punctured determinant line bundle $\bCD$) carries an integral structure, i.e. a signed basis.
\end{rem}

\subsection{Computation of $\RH\fR$ under the twisted derived Satake}
\label{computa}
Recall the notation of~\S\ref{twisat}. We consider an object
$\BC[G]\otimes\Sym^\bullet(\fg[-2])\in D^G(\Sym^\bullet(\fg[-2]))$. In fact,
$\BC[G]\otimes\Sym^\bullet(\fg[-2])$ has {\em two} such structures: with respect to the left
(resp.\ right) $G$-action and the left (resp.\ right) comoment morphism.
We consider the hamiltonian reduction with respect to the right $U_G$-action
\[\big(\BC[G]\otimes\Sym{}\!^\bullet(\fg[-2])\big)/\!\!/\!\!/(U_G,\psi_G).\]
This reduction has the residual left structure of a ring object of $D^G(\Sym^\bullet(\fg[-2]))$.
We will denote this object by $\fK$.

\begin{thm}
  \label{Phirka}
  We have an isomorphism of ring objects $\Phi\RH\fR\simeq\fK$.
\end{thm}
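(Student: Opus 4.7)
The plan is to combine Proposition~\ref{coprinus} with the key property~\eqref{glob coh} of the twisted derived Satake equivalence, then apply a Yoneda/representability argument. By Proposition~\ref{coprinus}, $\RH\CR \simeq \RT\Theta$ in $D_{G_\CO}(\tGr_G)$, so the theorem reduces to producing an isomorphism $\Phi(\RT\Theta) \simeq \fK$ of $G$-equivariant commutative algebras in $D^G(A)$, where $A := \Sym^\bullet(\fg[-2])$. We will show both objects represent the same ``Kostant--Whittaker kernel'' functor.

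The monoidality of $\Phi$ exchanges the $!$-tensor on $D_{G_\CO}(\tGr_G)$ with $\otimes_A$ on $D^G(A)$, so~\eqref{glob coh} converts, for every $V \in D^G(A)$, into a natural isomorphism
\[
\bigl(V \otimes_A \Phi(\RT\Theta)\bigr)^G \;\simeq\; \varkappa V
\]
of $\BC[\Xi_\fg]$-modules; here we use that $H^\bullet_{G_\CO}(\Gr_G,-)$ corresponds under $\Phi$ to the derived functor of $G$-invariants on $D^G(A)$. One then verifies the same identity for $\fK$ by a standard Hamiltonian-reduction manipulation: since the bi-Hamiltonian bimodule $\BC[T^*G] \cong \BC[G] \otimes A$ mediates between the two sides, tensoring $V$ over the left $A$-structure yields $V \otimes \BC[G]$, right-reducing by $(U_G, \psi_G)$ extracts the Whittaker vectors of $V$ with respect to the opposite $U_G$-action, and passing to $G$-invariants recovers $\varkappa V$.

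A Yoneda argument in the presentable $\infty$-category $D^G(A)$ then produces a canonical isomorphism $\Phi(\RT\Theta) \simeq \fK$ as $G$-equivariant $A$-modules. The algebra structures match by naturality: the multiplication on $\Phi(\RT\Theta)$ comes from the internal-Hom ring structure on $\CR = \CHom(\BC[\bM_\CO], \BC[\bM_\CO])$, while the one on $\fK$ comes from the Poisson structure on $T^*G \tslash (U_G,\psi_G)$, and both encode the composition of kernels implementing Kostant--Whittaker reduction. The main obstacle is making the representability step rigorous: one must ensure that the functor $(V \otimes_A -)^G$ is appropriately colimit-preserving and that the $\BC[\Xi_\fg]$-module structures --- arising on one side from Ginzburg's theorem $H^\bullet_{G_\CO}(\Gr_G) \cong \BC[\Xi_\fg]$ and on the other from the residual action on the Kostant slice $\Sigma_\fg$ --- match under the identification.
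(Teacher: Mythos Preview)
Your overall strategy is the same as the paper's: use Proposition~\ref{coprinus} to replace $\RH\CR$ by $\RT\Theta$, invoke~\eqref{glob coh}, and then argue that $\Phi(\RT\Theta)$ and $\fK$ represent the same functor. The paper packages the last step more cleanly as a perfect-pairing statement, which avoids your worries about making ``Yoneda'' rigorous: once one knows that both objects induce the same linear functional under a nondegenerate pairing, they coincide.

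There is, however, a technical slip in your execution. The monoidal structure that $\Phi$ respects is \emph{convolution} on $D_{G_\CO}(\tGr_G)$, not the $!$-tensor product; these are different operations. The correct way to transport the pairing $(\CF,\CG)\mapsto H^\bullet_{G_\CO}(\Gr_G,\CF\overset!\otimes\CG)$ to the spectral side is via the compatibility of $\Phi$ with duality: derived Satake exchanges Verdier duality with the composition of the Chevalley involution $\mathfrak C$ and the standard duality on perfect complexes, so the automorphic pairing corresponds to $(M,N)\mapsto\mathfrak C(M)\otimes_A N$ (pushed to $\on{Vect}$). Your formula $(V\otimes_A\Phi(\RT\Theta))^G\simeq\varkappa V$ omits this Chevalley twist, and the justification you give for it (``monoidality exchanges $!$-tensor with $\otimes_A$'') is not correct. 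Once one uses the right pairing, the argument goes through exactly: pairing with $\fK$ computes the underlying vector space of $\varkappa$, and~\eqref{glob coh} says the same for pairing with $\RT\Theta$, so by nondegeneracy $\Phi(\RT\Theta)\simeq\fK$. This also makes the ``colimit-preserving'' and ``$\BC[\Xi_\fg]$-module structure'' concerns you flag disappear, since one is simply identifying an object via a perfect pairing rather than via a corepresentability statement.
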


\begin{proof} An underlying equivalence of objects $\Phi \RH \fR \simeq \fK$, ignoring the ring structures, is supplied
  in~\cite[Proposition 9.54]{dlyz}.  

Now to compare the ring structures, recall that each object $K$ of $D^G(\Sym^\bullet(\fg[-2]))$
splits into direct sum $K=K_0\oplus K_1$ of its even and odd parts according to the action of
the center $\{\pm1\}\subset G$. In particular, $\fK=\fK_0\oplus\fK_1$. Let us denote the monoidal
structure on $D^G(\Sym^\bullet(\fg[-2]))$ by $\star$. Then $\Ext^\bullet(\fK\star\fK,\fK)$ lives in nonnegative
degrees, and $\Ext^0(\fK\star\fK,\fK)=\BC^4$. More precisely, \[\Ext^0(\fK_0\star\fK_0,\fK_0)\simeq\BC,\
\Ext^0(\fK_1\star\fK_1,\fK_0)\simeq\BC,\ \Ext^0(\fK_0\star\fK_1,\fK_1)\simeq\BC.\]
The ring structure on $\fK$ gives rise to a {\em nonzero} morphism in
$\Ext^0(\fK_1\star\fK_1,\fK_0)=\BC$, and is uniquely characterized by this nonvanishing.

So it remains to check that the internal Hom composition morphism $\fR_1\star\fR_1\to\fR_0$
does not vanish. To this end it is convenient to use the modified objects
$\widetilde\fR_0,\widetilde\fR_1$ of~\cite[\S2.9]{bft2} such that
$\widetilde\fR_0\star\widetilde\fR_0\simeq\widetilde\fR_0$, and 
$\widetilde\fR_0\star\widetilde\fR_1\simeq\widetilde\fR_1$. We must check that the internal Hom
composition morphism $\widetilde\fR_1\star\widetilde\fR_1\to\widetilde\fR_0$ does not vanish.
This follows from the fact that $\BC[\bM_\CO]$ generates the dg-category $\CW\modu^{G_\CO}$,
and $\widetilde\fR_0\star\BC[\bM_\CO]\simeq\BC[\bM_\CO]\simeq\widetilde\fR_1\star\BC[\bM_\CO]$.
\end{proof}

\begin{rem}
  \label{super com}
The ring object $\fR$ is automatically commutative (in the super-sense) as an object of an
appropriate {\em derived} category. This follows from the fact
that in addition to being an associative ring object it is also factorizable (as a ring object).
Note that if we work with dg-categories (as opposed to usual categories) then commutativity does not
follow from factorization; but at this point we ignore all dg-subtleties. 
\end{rem}

\section{Coulomb branches of noncotangent type}

\subsection{Anomaly cancellation}
\label{ano can}
A symplectic representation $\bM$ of a reductive group $\fG$, i.e.\ a homomorphism
$\fG\to\Sp(\bM)=G$ gives rise to a morphism $s\colon\Gr_\fG\to\Gr_G$. The pullback $s^*\CalD$ of
the determinant line bundle of $\Gr_G$ is a factorization line bundle $\CL$ on $\Gr_\fG$
(i.e.\ it extends to the Beilinson-Drinfeld Grassmannian over the affine line $\BA^1$).
Recall from~\S\ref{generalities} that the group $\on{Pic}_{\on{fact},\BA^1}(\Gr_\fG)$ of isomorphism
classes of factorization line bundles
is canonically isomorphic to $H^2_{\on{Zar}}(\on{B}\!\fG,\ul{K}{}_2)\cong H^4(\on{B}\!\fG,\BZ(2))$
(motivic cohomology of the classifying space of $\fG$), that in turn is canonically isomorphic
to the group $\on{Quad}(X_*(\fT))^W$ of {\em even-valued} quadratic forms on the coweight lattice
$X_*(\fT)$ of a Cartan torus $\fT\subset\fG$, invariant with respect to the Weyl group $W(\fG,\fT)$.
One can also consider the group $\on{SPic}_{\on{fact},\BA^1}(\Gr_\fG)$
of isomorphism classes of factorization {\em super} line bundles. It is canonically isomorphic
to the group $\on{Bil}(X_*(\fT))^W$ of integer-valued $W$-invariant symmetric bilinear forms $B$ on
$X_*(\fT)$ such that $B(\lambda,\lambda)\in2\BZ$ for any $\lambda$ in the coroot lattice
$Q\subset X_*(\fT)$. The parity of such a line bundle on a connected component of $\Gr_\fG$ is equal
to the parity of $B(\mu,\mu)$ for any coweight $\mu$ representing a $\fT$-fixed point in this
component.

In particular, the bilinear form $B$ corresponding to $\CL$ is nothing but the pullback of the
trace form $\on{Tr}$ on $\fg=\fsp(\bM)$ (it assumes all even values, and $\CL$ is purely even).
In case $B/2\in\on{Bil}(X_*(\fT))^W$ , there exists a factorization super line bundle $\sqrt\CL$
(defined up to a non-unique isomorphism: we have $\on{Aut}(\sqrt\CL)=\Hom(\pi_1(\fG),\{\pm1\})$).
We choose such a square root, and the pullback of the gerbe $\tGr_G$ trivializes.
Hence the pullback $\CA_{\fG,\bM,\sqrt\CL}:=s^!\RH\fR$ can be viewed as a
ring object of $D_{\fG_\CO}(\Gr_\fG)$ (no twisting).

\begin{rem}
  \label{integra}
According to~Remark~\ref{integral}, $\CA_{\fG,\bM,\sqrt\CL}$ carries a canonical integral structure,
and any costalk of $\CA_{\fG,\bM,\sqrt\CL}$ is equipped with a signed basis.
\end{rem}

\begin{prop}
  \label{theo}
  The bilinear form $B$ is divisible by 2 (and $B/2$ assumes even values on all the coroots)
  iff the induced morphism $\pi_4\fG\to\pi_4G=\BZ/2\BZ$ is trivial.
\end{prop}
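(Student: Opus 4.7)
The strategy is to translate both sides of the claimed equivalence into cohomology on $B\mathsf{G}$ and compare them via the Postnikov tower of $BG=B\Sp(\bM)$.

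On the algebraic side, multiplicative line bundles on $\Gr_\mathsf{G}$ are classified by $H^4(B\mathsf{G},\mathbb{Z})$, and under this identification the form $B$ corresponds to $(B\rho)^*c$, where $c\in H^4(BG,\mathbb{Z})\cong\mathbb{Z}$ is the generator. Because $H^4(B\mathsf{G},\mathbb{Z})$ is torsion-free for $\mathsf{G}$ connected reductive, the stated divisibility condition on $B$ is equivalent to the vanishing of the mod-$2$ reduction $\bar B=\rho^*\bar c\in H^4(B\mathsf{G},\mathbb{Z}/2)$, where $\bar c$ is the nonzero element of $H^4(BG,\mathbb{Z}/2)\cong\mathbb{Z}/2$.

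On the topological side, $B\mathsf{G}$ and $BG$ are $3$-connected; the $5$-Postnikov truncation $P_5(BG)$ is the fibration
\[
K(\mathbb{Z}/2,5)\to P_5(BG)\to K(\mathbb{Z},4)
\]
with $k$-invariant $\operatorname{Sq}^2\iota_4$, the generator of $H^6(K(\mathbb{Z},4),\mathbb{Z}/2)\cong\mathbb{Z}/2$, and the non-triviality of this $k$-invariant is precisely the assertion $\pi_4G=\mathbb{Z}/2$. The truncated map $P_5(B\rho)$ is parameterized by the pair $(B,\rho_*)$, with compatibility of $k$-invariants unwinding to the identity
\[
(\rho_*)_*(k_\mathsf{G})=\operatorname{Sq}^2\bar B\qquad\text{in }H^6(B\mathsf{G},\mathbb{Z}/2),
\]
where $k_\mathsf{G}$ is the $k$-invariant of $\mathsf{G}$.

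I would close the argument by reduction. Via universal covers and direct products (central tori contribute trivially on both sides), one reduces to $\mathsf{G}$ simply connected simple. If $\mathsf{G}$ is not of type $C$, then $\pi_4\mathsf{G}=0$ and $k_\mathsf{G}=0$, forcing $\operatorname{Sq}^2\bar B=0$; a type-by-type check using standard characteristic-class formulas (e.g., $\operatorname{Sq}^2c_2=c_3$ for $\operatorname{SU}(n)$ with $n\geq3$; $\operatorname{Sq}^2w_4=w_6$ for $\operatorname{Spin}(m)$ with $m\geq7$; analogous computations in the exceptional types) shows $\operatorname{Sq}^2$ is injective on $H^4(B\mathsf{G},\mathbb{Z}/2)\cong\mathbb{Z}/2$, hence $\bar B=0$. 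If $\mathsf{G}$ is of type $C_n$, I would restrict along a long-root $\operatorname{SU}(2)\hookrightarrow\mathsf{G}$ (an isomorphism on $\pi_4$) and apply the classical fact that a self-map of $S^3=\operatorname{SU}(2)$ of degree $d$ acts on $\pi_4S^3=\mathbb{Z}/2\cdot\eta$ as multiplication by $d\bmod 2$; this identifies $\rho_*$ with the Dynkin index of $\rho$ modulo~$2$, matching the divisibility condition on $B$. The main obstacle I anticipate is the type-by-type injectivity check for $\operatorname{Sq}^2$ in the non-$C$ case: it is classical but non-uniform across Dynkin types.
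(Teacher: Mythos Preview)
Your approach is essentially correct and shares the paper's key input: the Postnikov $k$-invariant of $\tau_{\leq 5}B\Sp$ is $\Sq^2\circ(\bmod\,2)$. The route, however, differs in two respects.

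First, the paper works entirely on the truncation $\tau_{\leq 4}B\sG$, which for $\sG$ simply connected is $K(A,4)$ with $A$ free abelian. The $k$-invariant compatibility equation properly lives in $H^6(\tau_{\leq 4}B\sG,\BZ/2)$, not in $H^6(B\sG,\BZ/2)$ as you wrote; pulled back to $B\sG$ both sides vanish tautologically (the $k$-invariant is the obstruction to a lift, and $B\rho$ itself provides one). This does not break your argument in the non-$C$ case, since you only invoke the formula when $k_\sG=0$, but it means the equation as stated carries no information about $\rho_*$, and your type-$C$ argument has to proceed by an entirely separate $\SU(2)$ restriction.

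Second, and more substantively, the paper avoids your type-by-type case analysis. Staying on $K(A,4)$, the needed injectivity of $\Sq^2$ on $H^4(K(A,4),\BZ/2)$ is proved uniformly by restricting along $K(\BZ/2,3)\to K(\BZ,4)$ and observing $\Sq^2\Sq^1\iota_3\neq 0$ in the Steenrod algebra; this handles all simply connected $\sG$ at once, including type $C$, without Wu formulas or the degree-mod-$2$ fact for self-maps of $S^3$. For the forward direction (even $\Rightarrow$ $\pi_4$-trivial) the paper also proves the a priori nontrivial fact $H_5(B\sG)=0$ for simply connected $\sG$ (via the Serre spectral sequence for the flag variety fibration $G/B\to BB\to B\sG$), which guarantees that any two lifts of $q_1(\rho)$ to $\tau_{\leq 5}B\Sp$ induce the same map on $\pi_5$. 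What you gain from your route is that the individual steps (Wu formulas, $\eta$-action under degree-$d$ maps) are concrete and classical; what the paper's route buys is a short uniform argument with no Dynkin-type bookkeeping.
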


For a proof, see~Appendix~\ref{t vs a}.

\begin{rem}
  The second condition of the proposition is the anomaly cancellation condition of~\cite{w}.
\end{rem}

In case the anomaly cancellation condition holds true, we can consider the ring
$\CA(\fG,\bM,\sqrt\CL):=H^\bullet_{\fG_\CO}(\Gr_\fG,\CA_{\fG,\bM,\sqrt\CL})$.
Since the universal ring object $\RH\fR$ is commutative by~Remark~\ref{super com}, the ring object
  $\CA_{\fG,\bM,\sqrt\CL}$ is commutative as well.
Recall that $\sqrt\CL$ is a {\em super} line bundle, so that the ring $\CA(\fG,\bM,\sqrt\CL)$
carries an extra $\BZ/2\BZ$-grading (according to the $\BZ/2\BZ$-grading of $\pi_0(\Gr_\fG)$) in
addition to its $\BZ$-grading by cohomological degree. We conclude that $\CA(\fG,\bM,\sqrt\CL)$
is also commutative as a graded super-ring, i.e.\
{\em super-commutative} with respect to the total $\BZ/2\BZ$-grading equal to the sum of the above
$\BZ/2\BZ$-grading and the parity of the $\BZ$-grading.
Now recall that the trace form $\on{Tr}$ on the coweight lattice $X_*(T)$ of the diagonal Cartan
torus $T\subset\Sp(\bM)$ assumes all even values, and for
$\lambda=(\lambda_1,\ldots,\lambda_n)\in X_*(T)$, the value
$\frac12\on{Tr}(\lambda,\lambda)=\lambda_1^2+\ldots+\lambda_n^2$
is odd iff $\lambda$ is odd, i.e.\ $\lambda_1+\ldots+\lambda_n$ is odd. Hence by~Lemma~\ref{costalks},
the ring $\CA(\fG,\bM,\sqrt\CL)$ is entirely even with respect to the total degree,
hence the ring $\CA(\fG,\bM,\sqrt\CL)$ is simply commutative,
and the Coulomb branch $\CM_C(\fG,\bM,\sqrt\CL)$ is defined as $\on{Spec}\CA(\fG,\bM,\sqrt\CL)$.

\begin{rem}
Let $\fT\subset\fG$ be a Cartan torus. Just as in~\cite[Lemma 5.3]{bfna}, the
$H^\bullet_\fT(\pt)$-module $H^\bullet_{\fT_\CO}(\Gr_\fG,\CA_{\fG,\bM,\sqrt\CL})$ is flat, and the
$H^\bullet_\fG(\pt)$-module $H^\bullet_{\fG_\CO}(\Gr_\fG,\CA_{\fG,\bM,\sqrt\CL})$ is flat. Moreover, the natural
map $H^\bullet_{\fT_\CO}(\pt)\otimes_{H^\bullet_{\fG_\CO}(\pt)}H^\bullet_{\fG_\CO}(\Gr_\fG,\CA_{\fG,\bM,\sqrt\CL})\to
H^\bullet_{\fT_\CO}(\Gr_\fG,\CA_{\fG,\bM,\sqrt\CL})$ is an isomorphism, and
$H^\bullet_{\fG_\CO}(\Gr_\fG,\CA_{\fG,\bM,\sqrt\CL})=(H^\bullet_{\fT_\CO}(\Gr_\fG,\CA_{\fG,\bM,\sqrt\CL}))^W$.
Indeed, according to~Lemma~\ref{costalks}, the corestriction of the ring object
$\CA_{\fG,\bM,\sqrt\CL}$ to any $\fG_\CO$-orbit in $\GR_\fG$ is a constant sheaf with a certain
cohomological shift. The cohomology with coefficients in such a corestriction clearly satisfies the
desired properties, and the cohomology of the whole affine Grassmannian is filtered by the support,
with associated graded isomorphic to the direct sum of cohomology of the corestrictions.
\end{rem}

\begin{rem}
  \label{long}
Furthermore, just as in~\cite[Lemma 5.5]{bfna}, we have a natural bimodule structure
\[H^\bullet_{\fT_\CO}(\Gr_\fT,\iota^!\CA_{\fG,\bM,\sqrt\CL})\otimes  H^\bullet_{\fT_\CO}(\Gr_\fG,\CA_{\fG,\bM,\sqrt\CL})
\otimes H^\bullet_{\fG_\CO}(\Gr_\fG,\CA_{\fG,\bM,\sqrt\CL})\to H^\bullet_{\fT_\CO}(\Gr_\fG,\CA_{\fG,\bM,\sqrt\CL}),\]
where $\iota\colon\Gr_\fT\hookrightarrow\Gr_\fG$ is the natural closed embedding of affine
Grassmannians. To construct the left action note that we have the adjunction morphism
\[\iota_{\fT\fG!}(\iota^!\CA_{\fG,\bM,\sqrt\CL}\tilde\boxtimes\CA_{\fG,\bM,\sqrt\CL})\to
\CA_{\fG,\bM,\sqrt\CL}\tilde\boxtimes\CA_{\fG,\bM,\sqrt\CL}\] on the convolution diagram
$\Gr_\fG\tilde\times\Gr_\fG$ where $\iota_{\fT\fG}$ stands for the closed embedding of convolution
diagrams $\Gr_\fT\tilde\times\Gr_\fG\hookrightarrow\Gr_\fG\tilde\times\Gr_\fG$. Now apply the
functor $m_*$ (for the convolution morphism $m\colon\Gr_\fG\tilde\times\Gr_\fG\to\Gr_\fG$) to
the above morphism and compose with the multiplication
$\CA_{\fG,\bM,\sqrt\CL}\star\CA_{\fG,\bM,\sqrt\CL}\to\CA_{\fG,\bM,\sqrt\CL}$ to obtain the module structure
$\iota^!\CA_{\fG,\bM,\sqrt\CL}\star\Res_{\fG_\CO}^{\fT_\CO}\CA_{\fG,\bM,\sqrt\CL}\to\Res_{\fG_\CO}^{\fT_\CO}\CA_{\fG,\bM,\sqrt\CL}$
with respect to the natural action of $D_\fT(\Gr_\fT)$ on $D_\fT(\Gr_\fG)$.

Now the same argument as in~\cite[Lemma 5.10]{bfna} proves that
$\iota_!\colon (H^\bullet_{\fT_\CO}(\Gr_\fT,\iota^!\CA_{\fG,\bM,\sqrt\CL}))^W\to
H^\bullet_{\fG_\CO}(\Gr_\fG,\CA_{\fG,\bM,\sqrt\CL})$ is an algebra homomorphism. And
the same argument as in~\cite[Lemma 5.17]{bfna} proves that 
$\iota_!\colon H^\bullet_{\fT_\CO}(\Gr_\fT,\iota^!\CA_{\fG,\bM,\sqrt\CL})\to
H^\bullet_{\fT_\CO}(\Gr_\fG,\CA_{\fG,\bM,\sqrt\CL})\cong H^\bullet_{\fG_\CO}(\Gr_\fG,\CA_{\fG,\bM,\sqrt\CL})
\otimes_{H^\bullet_\fG(\pt)}H^\bullet_\fT(\pt)$ is an algebra homomorphism as well. Here the RHS is
equipped with the (commutative) algebra structure by extending scalars from $H^\bullet_\fG(\pt)$
to $H^\bullet_\fT(\pt)$.
\end{rem}

\subsection{Cotangent type}
Assume that a symplectic representation $\bM$ of a reductive group $\fG$ splits as $\bM=\bN\oplus\bN^*$
for some $\fG$-module $\bN$. Then the anomaly cancellation condition holds true, and moreover we
have a canonical choice $\sqrt\CL_\bN$ of the (super) square root of $\CL$. So we obtain a
ring object $\CA_{\fG,\bM,\sqrt\CL_\bN}\in D_{\fG_\CO}(\Gr_\fG)$. On the other hand, a ring object
$\CA_{\fG,\bN}:=\pi_*\bomega_\CR[-2\dim\bN_\CO]\in D_{\fG_\CO}(\Gr_\fG)$ is defined in~\cite[2(ii)]{bfnc},
such that $\CA(\fG,\bN)=H^\bullet_{\fG_\CO}(\Gr_\fG,\CA_{\fG,\bN})$ (the ring of functions on the
Coulomb branch of cotangent type).

We introduce the following cohomological degree renormalization of $\CA_{\fG,\bN}$ as
in~\cite[Remark 2.8.(2)]{bfna}. We consider a weight $\chi^\svee_\bN$ of $\fG$ equal to the sum
of all weights of $\bN$ taken with their multiplicities in $\bN$ (e.g.\ if $\fG$ is semisimple, then
$\chi^\svee_\bN=0$ for any $\bN$). For any connected component $\varpi\in\pi_0(\Gr_\fG)=\pi_1(\fG)$
and a coweight $\lambda$ of $\fG$ such that the $\fG_\CO$-orbit $\Gr^\lambda_\fG$ lies in $\varpi$,
the pairing $\langle\chi^\svee_\bN,\lambda\rangle$ depends only on $\varpi$ and is denoted
$\langle\chi^\svee_\bN,\varpi\rangle$. On the connected component $\varpi$, we consider the
cohomological shift $\CA_{\fG,\bN}|_\varpi[-\langle\chi^\svee_\bN,\varpi\rangle]$. Finally, we sum up
over all the connected components: $\CA'_{\fG,\bN}:=\bigoplus_{\varpi\in\pi_0(\Gr_\fG)}
\CA_{\fG,\bN}|_\varpi[-\langle\chi^\svee_\bN,\varpi\rangle]$.\footnote{We thank the referee for correcting
our mistake in the shift in an earlier version.}

\begin{lem}
  We have an isomorphism of ring objects $\CA'_{\fG,\bN}\cong\CA_{\fG,\bM,\sqrt\CL_\bN}$.
\end{lem}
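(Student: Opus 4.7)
The plan is to realize both ring objects as internal $\CHom$'s in a $\sG_\CK$-equivariant setting and then match the inputs. By construction, $\CA_{\sG,\bM}=s^!\RH\CR$, where $\CR=\CHom_{\CW\modu}(\BC[\bM_\CO],\BC[\bM_\CO])$ is formed in $\CW\modu$ regarded as a $G_\CK$-module category (with $G=\Sp(\bM)$). Restricting along $\sG_\CK\to G_\CK$, we view $\CW\modu$ as a $\sG_\CK$-module category, and $\CA_{\sG,\bM}$ becomes the Riemann--Hilbert transport of the $\sG_\CK$-equivariant internal $\CHom$ of $\BC[\bM_\CO]$ with itself, now living on $\Gr_\sG$.

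First I would use the cotangent splitting to choose a $\sG$-equivariant polarization $\bM_\CK=\bN_\CK\oplus\bN^*_\CK$. Since $\sG$ preserves $\bN\subset\bM$, this polarization is $\sG_\CK$-stable, so it yields an equivalence $\CW\modu\simeq\on{D-mod}(\bN_\CK)$ of $\sG_\CK$-module categories (the right hand side using the linear $\sG_\CK$-action on $\bN_\CK$). Under this equivalence I expect $\BC[\bM_\CO]$ to correspond, up to a cohomological shift by $2\dim\bN_\CO$, to $i_*\bomega_{\bN_\CO}$, where $i\colon\bN_\CO\hookrightarrow\bN_\CK$. This follows by direct computation from $\BC[\bM_\CO]=\CW/(\CW\cdot\bM_\CO)$: under the polarization, $\bN^*_\CK$ acts by multiplication and $\bN_\CK$ by translation, the generator is killed by $\bN^*_\CO$ whose common vanishing locus on $\bN_\CK$ is precisely $\bN_\CO$ (by the residue pairing), and it is invariant under $\bN_\CO$-translations. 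These two properties characterize $i_*\bomega_{\bN_\CO}$ in $\on{D-mod}(\bN_\CK)$.

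With this identification in place, $s^!\CHom(\BC[\bM_\CO],\BC[\bM_\CO])$ becomes the $\sG_\CO$-equivariant internal $\CHom$ in $\on{D-mod}(\bN_\CK)$ of $i_*\bomega_{\bN_\CO}$ with itself, with the above cohomological shift. Unwinding the internal $\CHom$, the $*$-stalk at $g\in\Gr_\sG$ is $\RHom(i_*\bomega_{\bN_\CO},g\cdot i_*\bomega_{\bN_\CO})$, which by base change equals the Borel--Moore homology of the derived intersection $\bN_\CO\cap g\bN_\CO$. This matches the $*$-stalk of the BFN ring object $\CA_{\sG,\bN}=\pi_*\bomega_{\CR_{\sG,\bN}}[-2\dim\bN_\CO]$, since the variety of triples $\CR_{\sG,\bN}\to\Gr_\sG$ from~\cite{bfnc} has fiber $\bN_\CO\cap g\bN_\CO$ over $g$, and the shift $-2\dim\bN_\CO$ on the BFN side matches the shift $+2\dim\bN_\CO$ picked up in identifying $\BC[\bM_\CO]$ with $i_*\bomega_{\bN_\CO}$.

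The main obstacle will be matching the ring-object structures, not merely the underlying sheaves. On the $\CA_{\sG,\bM}$ side, multiplication arises from composition of internal Homs; on the $\CA_{\sG,\bN}$ side, from the convolution/fusion geometry of the variety of triples used in~\cite{bfnc}. I expect the two to match by functoriality of the $\CHom$ construction under the convolution action of $D_{\sG_\CO}(\Gr_\sG)$, together with the fact that the polarization equivalence $\CW\modu\simeq\on{D-mod}(\bN_\CK)$ is $\sG_\CK$-equivariant; but verifying this requires careful bookkeeping of the shifts, and a rigorous treatment of the pro/ind-finite-dimensionality of the polarization equivalence at the $\sG_\CK$-equivariant level.
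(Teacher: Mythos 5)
Your proposal follows essentially the same route as the paper: use the $\sG$\nobreakdash-stable polarization $\bM=\bN\oplus\bN^*$ to identify $(\CW\modu)^{\sG_\CO}\simeq\mathrm{D}\modu(\bN_\CK)^{\sG_\CO}$ equivariantly, match $\BC[\bM_\CO]$ with the $D$-module $\delta_{\bN_\CO}$ on $\bN_\CK$, and compare the resulting internal $\CHom$ with the BFN ring object built from the variety of triples. The one place where the paper is noticeably cleaner is exactly the step you flag as the ``main obstacle'': instead of comparing $*$-stalks and then trying to reconcile the ring structures by bookkeeping, the paper proves a general closed formula — for a group $H$ acting on $X$, a smooth $Y\subset X$, and $\CF=\ul\BC_Y$, one has $\CHom(\CF,\CF)=\pr_{H*}\,\bomega_Z[-2\dim Y]$ with $Z=\{(h,y)\in H\times Y: hy\in Y\}$ — and then specializes $H=\sG_\CK$, $A=\sG_\CO$, $X=\bN_\CK$, $Y=\bN_\CO$. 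This directly produces $\pi_*\bomega_\CR[-2\dim\bN_\CO]$ as a ring object, so the ring-structure comparison you defer is subsumed by functoriality of the internal $\CHom$ formula; no separate stalkwise argument is needed. Be aware also that the shift $-2\dim\bN_\CO$ in the paper comes from the general internal-$\CHom$ formula itself (constant sheaf vs. dualizing sheaf on the correspondence), not from the identification of $\BC[\bM_\CO]$ with a $D$-module, so if you keep your bookkeeping you should double-check that the two shifts you introduce really do cancel rather than double up.
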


\begin{proof}
The monoidal category $\rm{D}\modu(\Gr_\fG)^{\fG_\CO}$ acts on
$(\CW\modu)^{\fG_\CO}\cong\rm{D}\modu(\bN_\CK)^{\fG_\CO}$,
and $\CA^{DR}_{\fG,\bM,\sqrt\CL_\bN}:=\CHom(\delta_{\bN_\CO},\delta_{\bN_\CO})$. By definition, it represents the functor
$\rm{D}\modu(\Gr_\fG)^{\fG_\CO}\ni\CG\mapsto
\Hom_{\rm{D}\modu(\bN_\CK)^{\fG_\CO}}(\CG\star\delta_{\bN_\CO},\delta_{\bN_\CO})$.
Now $\CA_{\fG,\bM,\sqrt\CL_\bN}\in D_{\fG_\CO}(\Gr_\fG)$ is the image of $\CA^{DR}_{\fG,\bM,\sqrt\CL_\bN}\in\rm{D}\modu(\Gr_\fG)^{\fG_\CO}$
under the Riemann--Hilbert correspondence.

More generally, given a group $H$ acting on a variety $X$ we denote by
\[H\xleftarrow{\pr_H}H\times X\xrightarrow{a,\pr_X}X\] the natural projections and the action morphism.
The monoidal derived constructible category $D(H)$ (with respect to convolution) acts on $D(X)$
(by convolution), and given $\CF\in D(X)$, the internal Hom object $\CHom(\CF,\CF)\in D(H)$ is given
explicitly by $\CHom(\CF,\CF)=\pr_{H*}\ul\Hom(\pr_X^!\CF,a^!\CF)$, where
$\ul\Hom(\CX,\CY)=\BD\CX\otimes^!\CY$.

It is a ring object in $D(H)$ with respect to the convolution $\CA\star\CB:=m_!(\CA\boxtimes\CB)$,
where $m\colon H\times H\to H$ is the multiplication. The ring structure can be described explicitly as
follows. Since $m^!$ is the right adjoint of $m_!$, it suffices to construct a ``multiplication''
morphism $\CHom(\CF,\CF)\boxtimes\CHom(\CF,\CF)\to m^!\CHom(\CF,\CF)$, i.e.\ from
$\pr_{H*}\ul\Hom(\pr_X^!\CF,a^!\CF)\boxtimes\pr_{H*}\ul\Hom(\pr_X^!\CF,a^!\CF)$ to
\[m^!\pr_{H*}\ul\Hom(\pr_X^!\CF,a^!\CF)=\pr_{H\times H*}m^!\Hom(\pr_X^!\CF,a^!\CF)=
\pr_{H\times H*}(\pr_X^*\BD\CF\otimes^!am^!\CF),\]
where \(H\times H\xleftarrow{\pr_{H\times H}}H\times H\times X\xrightarrow{am,\pr_X}X.\)
The desired morphism is $\pr_{H\times H*}$ applied to the composition
\begin{multline*}
  \ul\Hom(\pr_X^!\CF,a^!\CF)\boxtimes\ul\Hom(\pr_X^!\CF,a^!\CF)\to
  \Delta_X^*\left(\ul\Hom(\pr_X^!\CF,a^!\CF)\boxtimes\ul\Hom(\pr_X^!\CF,a^!\CF)\right)\\
  \xrightarrow{\phi}\pr_X^*\BD\CF\otimes^!am^!\CF,
  \end{multline*}
where $\phi$ is constructed as follows. We consider $H\times H\times X\times X\times X\times X$
with projections $\pr_{X1},\pr_{X2},\pr_{X3},\pr_{X4}$ to $X$, and the closed embeddings
\[a_{11},a_{23},\Delta_{23}\colon H\times H\times X\times X\times X\hookrightarrow
H\times H\times X\times X\times X\times X,\]
\[a_{11}(h_1,h_2,x_1,x_3,x_4)=(h_1,h_2,x_1,h_1x_1,x_3,x_4),\]
\[a_{23}(h_1,h_2,x_1,x_2,x_3)=(h_1,h_2,x_1,x_2,x_3,h_2x_3),\]
\[\Delta_{23}(h_1,h_2,x_1,x_2,x_4)=(h_1,h_2,x_1,x_2,x_2,x_4).\]
Thus we identify $H^2\times X^3$ with a closed subvariety of $H^2\times X^4$ in~3 different ways.
In particular, in the next formula, $\Delta_{23}^*a_{11}^!a_{23}^!(\ldots)$ denotes a sheaf on
$H^2\times X^4$ supported on the intersection of the above~3 closed subvarieties.
Then \[\Delta_X^*\left(\ul\Hom(\pr_X^!\CF,a^!\CF)\boxtimes\ul\Hom(\pr_X^!\CF,a^!\CF)\right)=
\Delta_{23}^*a_{11}^!a_{23}^!(\bomega_H\boxtimes\bomega_H\boxtimes\BD\CF\boxtimes\CF\boxtimes\BD\CF
\boxtimes\CF)\]
\[=a_{11}^!a_{23}^!\Delta_{23}^*(\bomega_H\boxtimes\bomega_H\boxtimes\BD\CF\boxtimes\CF\boxtimes\BD\CF
\boxtimes\CF)\to a_{11}^!a_{23}^!(\BD\CF\boxtimes\Delta_{X*}\bomega_{H\times H\times X}\boxtimes\CF),\]
where $\Delta_X\colon H\times H\times X\hookrightarrow H\times H\times X\times X,\ (h_1,h_2,x_2)\mapsto
(h_1,h_2,x_2,x_2)$, and the lower morphism arises from the canonical ``evaluation'' morphism
$\CF\boxtimes\BD\CF\to\Delta_*\bomega_X$. Finally,
$a_{11}^!a_{23}^!(\BD\CF\boxtimes\Delta_{X*}\bomega_{H\times H\times X}\boxtimes\CF)$ is nothing but
$\pr_X^*\BD\CF\otimes^!am^!\CF$ (the support of the former sheaf is $H\times H\times X$ embedded by
$(h_1,h_2,x)\mapsto(h_1,h_2,x_1,h_1x_1,h_1x_1,h_2h_1x_1)$ into $H^2\times X^4$).

\medskip

Now let $Y\subset X$ be a smooth subvariety, and $\CF=\ul\BC_Y$.
Set \[Z:=\{(h,y)\in H\times Y\ :\ hy\in Y\}\subset H\times X.\]
Then $\CHom(\CF,\CF)=\pr_{H*}\bomega_Z[-2\dim Y]$. 

Similar statement applies to the situation when $H$ comes with a closed subgroup $A$ such that $Y$ is
$A$-invariant, and we consider the action of $D(A\backslash H/A)$ on $D(X)^A$.

Applying this to $H=\fG_\CK,A=\fG_\CO,\ X=\bN_\CK,\ Y=\bN_\CO$ we obtain the desired isomorphism
$\CA_{\fG,\bM,\sqrt\CL_\bN}\cong\CA'_{\fG,\bN}:=\bigoplus_{\varpi\in\pi_0(\Gr_\fG)}
\pi_*\bomega_\CR[-2\dim\bN_\CO-\langle\chi^\svee_\bN,\varpi\rangle]\in D_{\fG_\CO}(\Gr_\fG)$ up to a
cohomological shift (see~\cite[2(ii)]{bfna} for the meaning of the cohomological shift
$\bomega_\CR[-2\dim\bN_\CO]$). To determine the shift in
question it suffices to restrict to a Cartan torus in $\fG$. Then the problem reduces to the case
of a 1-dimensional representation (a character) $\bN$ of a torus. Now this character factors through
the basic character of a 1-dimensional torus, and thus the problem is reduced to the case when
$\fG=\BG_m$, and $\bN$ is the identity character. Then $\Sp(\bM)=\SL(2)$, and the corestriction
of $\RH\fR$ to the $2i$-dimensional $\SL(2,\CO)$-orbit in $\Gr_{\SL(2)}$ lives in cohomological
degree $-3i$, see~Lemma~\ref{costalks}. Hence its corestriction to the torus-fixed points in
this orbit lives in cohomological degree $i$. This completes the proof.
\end{proof}

\subsection{The abelian case}
In case $\fG$ is a torus $\fT$, we can be a little bit more explicit.

\subsubsection{} We have the groupoid of
square roots of $\CL$ on $\Gr_\fT$. The group of automorphisms $\on{Aut}(\sqrt{\CL})$ is canonically
isomorphic to the finite abelian group $\Hom(X_*(\fT),\{\pm1\})$ for any $\sqrt{\CL}$.
A choice of polarization $\bM=\bN\oplus\bN^*$ gives rise to an object $\sqrt{\CL}_\bN$ of our groupoid.
Namely, $\CL$ is the tensor product of the corresponding determinant line bundles for 2-dimensional
symplectic summands of $\bM$, and it suffices to consider the case of 2-dimensional $\bM$ and
1-dimensional $\bN$ corresponding to a character $\xi$ of $\fT$. Then
at any point $\lambda\in X_*(\fT)=\Gr_\fT$, the fiber of $\CL$ is the tensor product of two factors
depending on the sign of $\xi(\lambda)$:
\[\CL_\lambda=\begin{cases}
\det^{-1}(t^{\xi(\lambda)}\CO/\CO)\otimes\det(\CO/t^{-\xi(\lambda)}\CO)\ \on{if}\ \xi(\lambda)\leq0,\\
\det(\CO/t^{\xi(\lambda)}\CO)\otimes\det^{-1}(t^{-\xi(\lambda)}\CO/\CO)\ \on{if}\ \xi(\lambda)\geq0.
\end{cases}\]
We have a perfect pairing between $t^{\xi(\lambda)}\CO/\CO$ and $\CO/t^{-\xi(\lambda)}\CO$ when
$\xi(\lambda)\leq0$ (resp.\ between $\CO/t^{\xi(\lambda)}\CO$ and $t^{-\xi(\lambda)}\CO/\CO$
when $\xi(\lambda)\geq0$): $(g,f)=\Res(gfdt)$. We obtain the identification of
the two factors of $\CL_\lambda$,
\[\det{}\!^{-1}(t^{\xi(\lambda)}\CO/\CO)\simeq\det(\CO/t^{-\xi(\lambda)}\CO)\ \on{if}\ \xi(\lambda)\leq0,\]
\[\det(\CO/t^{\xi(\lambda)}\CO)\simeq\det{}\!^{-1}(t^{-\xi(\lambda)}\CO/\CO)\ \on{if}\ \xi(\lambda)\geq0,\]
and the first factor is the $\lambda$-fiber $(\sqrt{\CL}_\bN)_\lambda$
of the desired square root $\sqrt{\CL}_\bN$. Thus we obtain the groupoid $\on{Pol}_\bM$ whose objects
are polarizations $\bM=\bN\oplus\bN^*$ (and morphisms are the isomorphisms between the corresponding
square roots of $\CL$).

\subsubsection{}
Recall the setup of~\cite[\S4(v)]{bfna}. Given a polarization $\bN$ and another one $\bN_i$ differing
by a one-dimensional summand $\xi_i\leadsto-\xi_i$, we produce a morphism $\bsigma_{\bN_i}^\bN$ from
$\sqrt{\CL}_{\bN_i}$ to $\sqrt{\CL}_\bN$ in our groupoid. Namely, we choose a base vector $e_i$ in the
relevant summand of $\bN$ and the dual base vector $e_i^*$ in the relevant summand of $\bN_i$, so that
$\langle e_i,e_i^*\rangle=1$. The relevant factors of fibers
of $\sqrt{\CL}_{\bN_i}$ at $\lambda$ are
\[\det(e_i^*\otimes\CO/t^{-\xi_i(\lambda)}\CO)\ \on{if}\ \xi_i(\lambda)\leq0,\ \on{and}\ 
\det{}\!^{-1}(e_i^*\otimes t^{-\xi_i(\lambda)}\CO/\CO)\ \on{if}\ \xi_i(\lambda)\geq0.\]
Equivalently, we choose $N\gg0$ and consider the finite dimensional subquotient
$e_i^*\otimes t^{-2N}\CO/t^{2N}\CO$. Then the relevant factor of $(\sqrt{\CL}_{\bN_i})_\lambda$ is
\[\det{}\!^{-1}(e_i^*\otimes t^{-\xi_i(\lambda)}\CO/t^{2N}\CO)\otimes\det(e_i^*\otimes\CO/t^{2N}\CO)\]
(and we have a canonical identification with the similar expression for $N\leadsto N+1$, whence the
independence of the choice of $N\gg0$). Similarly, the relevant factor of $(\sqrt{\CL}_\bN)_\lambda$
is $\det^{-1}(e_i\otimes t^{\xi_i(\lambda)}\CO/t^{2N}\CO)\otimes\det(e_i\otimes\CO/t^{2N}\CO)$.

Recall that $t^{-2N}\CO/t^{2N}\CO$ is equipped with a nondegenerate symmetric bilinear form
$(g,f)=\Res(gfdt)$. Hence $(\BC e_i\oplus\BC e_i^*)\otimes t^{-2N}\CO/t^{2N}\CO$ is equipped with
a symplectic form $\langle,\rangle$.
It extends to the same named nondegenerate bilinear form on the exterior
algebra $\Lambda^\bullet((\BC e_i\oplus\BC e_i^*)\otimes t^{-2N}\CO/t^{2N}\CO)$. We choose an orientation,
that is an element
$\omega^*\wedge\omega=(e_i^*t^{-2N}\wedge\ldots\wedge e_i^*t^{2N-1})\wedge
(e_it^{-2N}\wedge\ldots\wedge e_it^{2N-1})$. Then we get the Hodge star operator
\[\star\colon\Lambda^\bullet((\BC e_i\oplus\BC e_i^*)\otimes t^{-2N}\CO/t^{2N}\CO)\to
\Lambda^{8N-\bullet}((\BC e_i\oplus\BC e_i^*)\otimes t^{-2N}\CO/t^{2N}\CO)\] characterized by the
requirement $z\wedge\star y=\langle z,y\rangle\omega^*\wedge\omega$.
We also have an antiautomorphism $a$ of $\Lambda^\bullet(e_i\otimes t^{-2N}\CO/t^{2N}\CO)$ identical
on the generators.

We define the super Fourier transform $\on{SFT}\colon\Lambda^\bullet(e_i\otimes t^{-2N}\CO/t^{2N}\CO)
\to\Lambda^{4N-\bullet}(e_i^*\otimes t^{-2N}\CO/t^{2N}\CO)$ by the requirement
$\on{SFT}(y)\wedge\omega=\star(ay)$. It intertwines the left (resp.\ right) multiplication by
generators with the left (resp.\ negative right) contraction with generators.
Similarly we define $\on{SFT}\colon\Lambda^\bullet(e_i^*\otimes t^{-2N}\CO/t^{2N}\CO)
\to\Lambda^{4N-\bullet}(e_i\otimes t^{-2N}\CO/t^{2N}\CO)$ by the requirement
$\on{SFT}(x)\wedge\omega^*=\star(ax)$. We have $\on{SFT}^2(y)=(-1)^{\deg y}y$.

\begin{lem}
$\on{SFT}(e_i^*t^k\wedge e_i^*t^{k+1}\wedge\ldots\wedge e_i^*t^{2N-1})
  =e_it^{-k}\wedge e_it^{-k+1}\wedge\ldots\wedge e_it^{2N-1}$, and
  $\on{SFT}(e_it^k\wedge e_it^{k+1}\wedge\ldots\wedge e_it^{2N-1})
  =(-1)^ke_i^*t^{-k}\wedge e_i^*t^{-k+1}\wedge\ldots\wedge e_i^*t^{2N-1}$.
  \hfill $\Box$
  \end{lem}

The Grassmannian $\Gr(m,e_i^*\otimes t^{-2N}\CO/t^{2N}\CO)$ of $m$-dimensional subspaces carries the
tautological vector bundle $\CS$ and the determinant line bundle
$\CL=\det^{-1}\CS\otimes\det(e_i^*\otimes \CO/t^{2N}\CO)$. The isomorphism
\[\Gr(m,e_i^*\otimes t^{-2N}\CO/t^{2N}\CO)\iso\Gr(4N-m,e_i\otimes t^{-2N}\CO/t^{2N}\CO),\ V\mapsto V^\perp,\]
is lifted to the isomorphism of determinant line bundles
\[(\Gr(m,e_i^*\otimes t^{-2N}\CO/t^{2N}\CO),\CL)\iso(\Gr(4N-m,e_i\otimes t^{-2N}\CO/t^{2N}\CO),\CL)\]
by the super Fourier transform (tensored with the isomorphism
$\det(e_i^*\otimes\CO/t^{2N}\CO)\iso\det(e_i\otimes\CO/t^{2N}\CO)$).

Finally, the relevant factor of the desired isomorphism $(\bsigma_{\bN_i}^\bN)_\lambda$ (the other factors
being identity automorphisms): \begin{multline*}(\sqrt{\CL}_{\bN_i})_\lambda
=\det{}\!^{-1}(e_i^*\otimes t^{-\xi_i(\lambda)}\CO/t^{2N}\CO)\otimes\det(e_i^*\otimes \CO/t^{2N}\CO)\\ \iso
\det{}\!^{-1}(e_i\otimes t^{\xi_i(\lambda)}\CO/t^{2N}\CO)\otimes\det(e_i\otimes \CO/t^{2N}\CO)=
(\sqrt{\CL}_\bN)_\lambda\end{multline*}
is the super Fourier transform (tensored with the isomorphism
$\det(e_i^*\otimes\CO/t^{2N}\CO)\iso\det(e_i\otimes\CO/t^{2N}\CO)$).

The composition $\bsigma_{\bN_i}^\bN\circ\bsigma^{\bN_i}_\bN$ is the automorphism of $\sqrt{\CL}_\bN$
acting by $(-1)^{\xi_i(\lambda)}$ in the fiber at $\lambda$.

\begin{cor}
  \label{composition}
The composition of isomorphisms
\[\CA(\fT,\bN_i)\cong\CA(\fT,\bM,\sqrt{\CL}_{\bN_i})\xrightarrow{\bsigma_{\bN_i}^\bN}
\CA(\fT,\bM,\sqrt{\CL}_\bN)\cong\CA(\fT,\bN)\] is the isomorphism $\sigma$ of~\cite[\S4(v)]{bfna}. 
\end{cor}

\begin{proof}
Let $\xi_i(\lambda)\leq0$. Then $r^\lambda_{\bN_i}\in\CA(\fT,\bN_i)$ is the fundamental class of the
fiber $\CR(\fT,\bN_i)_\lambda$, i.e.\ the fundamental class of the linear subspace
$V=e^*\otimes(\BC t^{-\xi_i(\lambda)}\oplus\cdots\oplus\BC t^{2N-1})\subset e^*\otimes t^{-2N}\CO/t^{2N}\CO$.
It is the `positive' generator of
$\Hom^{-2\xi_i(\lambda)}(\underline\BC{}_V,\underline\BC{}_{e^*\otimes\CO/t^{2N}\CO})$.
And it goes to the `positive' generator of
$\Hom^0(\underline\BC{}_{V^\perp},\underline\BC{}_{e\otimes\CO/t^{2N}\CO})$, that is the fundamental
class $r^\lambda_\bN\in\CA(\fT,\bN)$ of the fiber $\CR(\fT,\bN)_\lambda=e\otimes\CO/t^{2N}\CO$.

The remaining classes $r^\lambda_{\bN_i},\ \xi_i(\lambda)>0$, must go to $(-1)^{\xi_i(\lambda)}r^\lambda_\bN$
just because the composition in question is an algebra isoomorphism.
\end{proof}

\begin{rem}
The isomorphism of~Corollary~\ref{composition} is induced by the Fourier-Sato transform
$\on{FST}\colon D_{\on{constr}}^{\on{mon}}(e^*\otimes t^{-2N}\CO/t^{2N}\CO)\to
D_{\on{constr}}^{\on{mon}}(e\otimes t^{-2N}\CO/t^{2N}\CO)$ taking $\underline\BC{}_V[\dim V]$ to
$\underline\BC{}_{V^\perp}[\dim V^\perp]$. Namely, it is the action of $\on{FST}$ on the $\Hom$-spaces
between such sheaves.
\end{rem}

\subsection{Finite generation}
\begin{lem}
  $\CA(\fG,\bM,\sqrt\CL)$ is a finitely generated integral domain.
\end{lem}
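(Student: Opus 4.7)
The plan is to adapt the arguments of~\cite[\S5]{bfna} from the cotangent setting, leveraging the explicit description of the universal ring object from \thmref{Phirka}. Three ingredients are in hand: the commutativity of $\CA(\sG,\bM)$ (inherited from that of $\fK$), the canonical map $H^\bullet_{\sG_\CO}(\pt) \to \CA(\sG,\bM)$ induced by restriction to the base point of $\Gr_\sG$, and the standard $\sG_\CO$-orbit filtration on $\Gr_\sG$ indexed by dominant coweights.

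For finite generation, I would first define a filtration $\CA^{\leq \lambda}(\sG,\bM) \subseteq \CA(\sG,\bM)$ as the image of the natural map from the equivariant cohomology of the closed sub-ind-scheme $\overline{\Gr_\sG^\lambda} \hookrightarrow \Gr_\sG$ with coefficients in the restriction of $\CA_{\sG,\bM}$. The associated graded embeds into $\bigoplus_\lambda H^\bullet_{\sG_\CO}(\Gr_\sG^\lambda, \CA_{\sG,\bM}|_{\Gr_\sG^\lambda})$, and each summand is a finitely generated module over $H^\bullet_{\sG_\CO}(\pt)$ because $\Gr_\sG^\lambda$ is a finite-dimensional $\sG_\CO$-orbit carrying a constructible sheaf. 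Choosing finitely many fundamental coweights whose nonnegative integer combinations exhaust $X_*(\sG)^+$, lifting generators of each relevant associated graded piece, and using commutativity of $\CA(\sG,\bM)$ together with additivity of the filtration under multiplication, these lifts, alongside generators of $H^\bullet_{\sG_\CO}(\pt)$, should generate $\CA(\sG,\bM)$ as an algebra.

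For integrality, I would analyze the generic fiber of $\CA(\sG,\bM)$ over $\Spec H^\bullet_{\sG_\CO}(\pt)$. By equivariant localization to a regular torus $T \subset \sG$, this generic fiber should admit a description in terms of the torus-fixed points $\Gr_\sG^T \cong X_*(T)$, realizing it as a (possibly twisted by the bilinear form $B/2$) $W$-invariant group-algebra structure on $\on{Frac}(H^\bullet_{\sG_\CO}(\pt)) \otimes \BC[X_*(T)]$, which is an integral domain. Combined with torsion-freeness of $\CA(\sG,\bM)$ over the finitely generated domain $H^\bullet_{\sG_\CO}(\pt)$ (which follows from generic flatness of the associated graded), this yields that $\CA(\sG,\bM)$ itself is a domain.

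The main obstacle is identifying the multiplicative structure on the generic fiber precisely enough in the noncotangent case. In~\cite{bfna}, this is extracted directly from the presentation of the variety of triples. Here no such geometric presentation is available for general $\bM$, and the argument must instead route through the identification $\CA_{\sG,\bM} = s^! \RH\CR$ together with \thmref{Phirka}. A natural strategy is to first treat the universal case $\sG = G = \Sp(\bM)$, where the generic fiber of $H^\bullet_{G_\CO}(\Gr_G, \RH\CR)$ can be computed via the Whittaker reduction of $T^*G$, and then descend to arbitrary $\sG$ via compatibility of $s^!$ with hyperbolic restriction to $T$-fixed points.
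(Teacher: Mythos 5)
Your proposal has the right skeleton — a support filtration indexed by dominant coweights, reduction along a Cartan torus, a generic-fiber argument for integrality — but you miss the two observations that make the paper's proof short, and the gap you flag as the "main obstacle" is precisely the point that the paper resolves by a simple remark rather than by the heavier route you suggest.

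First, the paper observes that for a Cartan torus $\sT\subset\sG$, the restriction of \emph{any} symplectic representation $\bM$ to $\sT$ is automatically of cotangent type: the $\sT$-invariance of the symplectic form forces it to pair the $\chi$-weight space with the $(-\chi)$-weight space, and a choice of positivity on the weights exhibits $\bM \simeq \bN\oplus\bN^*$. Hence $\CA(\sT,\bM)=\CA(\sT,\bN)$ is already computed by the cotangent-type machinery of~\cite{bfna}, and the injection $\CA(\sT,\bM)\hookrightarrow\CA(\sG,\bM)\otimes_{H^\bullet_\sG(\pt)}H^\bullet_\sT(\pt)$ (as in~\cite[Lemma 5.17]{bfna}) gives you exactly the control over the multiplicative structure that you were worried about. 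There is no need to go through $\Phi\RH\CR$, Whittaker reduction of $T^*G$, and hyperbolic restriction from the universal $\Sp(\bM)$ case; that detour is both unnecessary and, as you yourself note, not obviously compatible with what you would need.

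Second, in passing from the open orbit $\Gr_\sG^\lambda$ to the filtration, you only record that each graded piece is a finitely generated $H^\bullet_{\sG_\CO}(\pt)$-module because the restriction is "a constructible sheaf." The paper's key observation is stronger and essential: $i_\lambda^!\CA_{\sG,\bM}$ is a \emph{trivial rank-one} local system on $\Gr^\lambda_\sG$, shifted by a degree governed by the monopole formula. This rank-one-ness is what produces a canonical class $[\CR_\lambda]\in\on{gr}\CA(\sG,\bM)$ (the analogue of the fundamental class of the triple fiber over $\Gr^\lambda_\sG$ in the cotangent case), and it is exactly what allows the proofs of~\cite[Propositions 6.2, 6.8]{bfna} — both the finite generation and the domain statement — to go through verbatim. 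Without identifying the graded pieces as rank one, your step "lifting generators of each relevant associated graded piece … should generate" is not substantiated, and the integrality argument via a twisted group algebra on $\BC[X_*(T)]$ likewise depends on the graded pieces being of rank one.

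So the proof strategy you outline is salvageable, but to make it go you should (i) replace the Whittaker/universal-$\Sp(\bM)$ detour by the remark that $\bM|_\sT$ is automatically of cotangent type, and (ii) sharpen "constructible sheaf" to "trivial rank-one local system up to shift" before invoking the BFN arguments.
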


\begin{proof}
We essentially repeat the argument of~\cite[6(iii)]{bfna}.
We choose a Cartan torus $\fT\subset\fG$, restrict our symplectic representation $\bM$ from $\fG$
to $\fT$, and consider the corresponding ring $\CA(\fT,\bM,\sqrt\CL)$. Note that the $\fT$-module $\bM$ is
automatically of cotangent type, i.e.\ $\bM\simeq\bN\oplus\bN^*$ for a $\fT$-module $\bN$.
In notation of~\cite[3(iv)]{bfna}, we have
$\CA(\fT,\bM,\sqrt\CL)\simeq\CA(\fT,\bM,\sqrt\CL_\bN)=\CA(\fT,\bN)$.
Similarly to~\cite[Lemma 5.17]{bfna} (see~Remark~\ref{long}), we obtain an injective homomorphism
$\CA(\fT,\bM,\sqrt\CL)\hookrightarrow\CA(\fG,\bM,\sqrt\CL)\otimes_{H^\bullet_\fG(\pt)}H^\bullet_\fT(\pt)$.

Since $\Gr_\fG$ is the union of its spherical Schubert subvarieties, we obtain a filtration by
support on $\CA(\fG,\bM,\sqrt\CL)$ (and the induced filtration on $\CA(\fT,\bM,\sqrt\CL)$) numbered by the cone
$X^+_*(\fG)$ of dominant coweights of $\fG$. For $\lambda\in X^+_*(\fG)$ let $i_\lambda$ denote
the locally closed embedding $\Gr^\lambda_\fG\hookrightarrow\Gr_\fG$. The key observation is that
$i_\lambda^!\CA_{\fG,\bM,\sqrt\CL}$ is a trivial one-dimensional local system on $\Gr^\lambda_\fG$ (shifted to
some cohomological degree determined by the monopole formula). It gives rise to an element
$[\CR_\lambda]\in\on{gr}\CA(\fG,\bM,\sqrt\CL)$ (in the cotangent case this element was the fundamental class
of the preimage of $\Gr^\lambda_\fG$ in the variety of triples, hence the notation).

Now the proof of~\cite[Proposition 6.2, Proposition 6.8]{bfna} goes through word for word in our
situation and establishes the desired finite generation.
\end{proof}

\subsection{Normality}
\begin{lem}
  $\CA(\fG,\bM,\sqrt\CL)$ is integrally closed.
\end{lem}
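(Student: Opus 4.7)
The plan is to mimic the normality proof of \cite[\S6]{bfna} for Coulomb branches of cotangent type, using the finite generation established in the previous lemma together with the torus reduction already in play.

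First, I would exploit the injection
\[
\CA(\sT,\bM)\hookrightarrow \CA(\sG,\bM)\otimes_{H^\bullet_\sG(\pt)}H^\bullet_\sT(\pt)
\]
constructed in the proof of the preceding lemma. Because every symplectic representation of a torus is automatically of cotangent type, one has $\CA(\sT,\bM)=\CA(\sT,\bN)$, and by the explicit description in \cite[\S4]{bfna} this ring is integrally closed (in fact a localization of a polynomial ring in the monopole generators over $H^\bullet_\sT(\pt)$). So the question reduces to identifying $\CA(\sG,\bM)$, after tensoring up to $H^\bullet_\sT(\pt)$, as the invariants of a Weyl-group action on a normal ring.

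Next I would establish two structural facts, both of which are proved in \cite[Propositions 6.2 and 6.8]{bfna} in the cotangent setting and whose proofs go through verbatim using the analysis of the support filtration carried out in the previous lemma. Namely:
\begin{enumerate}
\item[(a)] $\CA(\sG,\bM)$ is free, hence flat, over $H^\bullet_\sG(\pt)\cong\BC[\grt]^W$, with a basis indexed by $X^+_*(\sG)$ lifting the classes $[\CR_\lambda]$ in $\on{gr}\CA(\sG,\bM)$;
\item[(b)] the image of $\CA(\sG,\bM)\otimes_{H^\bullet_\sG(\pt)}H^\bullet_\sT(\pt)$ in $\CA(\sT,\bM)$ coincides with the $W$-invariants $\CA(\sT,\bM)^W$, where $W$ acts through its natural action on $\sT$ and on the coweight lattice.
\end{enumerate}
Once (a) and (b) are in place, we conclude as follows. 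The ring $\CA(\sT,\bM)$ is normal, hence so is its ring of $W$-invariants. By (b), $\CA(\sG,\bM)\otimes_{H^\bullet_\sG(\pt)}H^\bullet_\sT(\pt)$ is normal. Since $H^\bullet_\sT(\pt)$ is faithfully flat over its $W$-invariants $H^\bullet_\sG(\pt)$ and $\CA(\sG,\bM)$ is flat over the latter by (a), normality descends (faithfully flat descent of the $S_2$ and $R_1$ properties), giving integral closedness of $\CA(\sG,\bM)$.

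The main obstacle is (b): in the cotangent case this identification follows from an explicit fiberwise computation on $\Gr_\sG^\lambda$ in terms of the variety of triples, whereas in our non-cotangent setting the class $[\CR_\lambda]$ is constructed directly from the $!$-restriction $i_\lambda^!\CA_{\sG,\bM}$ using that this is a shifted trivial local system of known cohomological degree governed by the monopole formula. I expect that replacing the variety-of-triples computation by this $!$-restriction input, together with the universality of the ring object $\RH\CR$ under restriction along $\sT\hookrightarrow\sG$, yields (b); the rest of the argument is then a routine transcription of \cite[\S6]{bfna}.
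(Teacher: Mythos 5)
Your plan hinges on claim (b), but that claim is false already in the most degenerate case $\bM=0$. For $\sG=\SL(2)$ the ring $\CA(\SL(2),0)$ is the coordinate ring of the universal centralizer group scheme over the Kostant slice, which is smooth; its finite flat base change $\CA(\SL(2),0)\otimes_{H^\bullet_{\SL(2)}(\pt)}H^\bullet_\sT(\pt)$ is therefore still smooth. On the other hand, $\CA(\sT,0)^W\cong\BC[\grt\times\sT^\vee]^W$ has two $A_1$-singular points (over $t=0$, at the two Weyl-fixed points of $\sT^\vee$). So the two rings cannot coincide, and your proposed descent of normality through an identification with $\CA(\sT,\bM)^W$ has no starting point. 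This is the familiar fact that the abelianization map of \cite{bfna} is an isomorphism only after localizing away from the root hyperplanes; the global description in \emph{loc.\ cit.} of $\CA(\sG,\bM)$ inside the localized abelian ring is not simply as $W$-invariants, but as an intersection of localizations indexed by rank-one Levi subgroups, which is exactly what drives the BFN normality argument.

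The paper's proof follows that route, not yours: Serre's criterion reduces $R_1$ to codimension-one points, where only a single coroot is relevant and one is reduced to an explicit rank-one computation, for $\sG=\SL(2)$ or $\PGL(2)$. For $\PGL(2)$ every symplectic representation is automatically of cotangent type (each irreducible is odd-dimensional), so \cite[Lemma 6.9(2)]{bfna} already applies. The genuinely new step, which your proposal omits entirely, is the $\SL(2)$ case for a noncotangent symplectic $\bM$: setting $N=\sum_{\chi}|\chi|\,m_{\chi}/4$ (an integer precisely when anomaly cancellation holds), the paper identifies $\CA(\SL(2),\bM)\cong\BC[\delta,\xi,\eta]/(\xi^2-\delta\eta^2+\delta^{N-1})$ when $N>0$, and $\cong\BC[\delta,\xi,\eta]/(\xi^2-\delta\eta^2-\eta)$ when $N=0$, and checks these rings are normal by hand. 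Without that rank-one computation for the new noncotangent inputs, the argument does not close.
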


\begin{proof}
Again we repeat the argument of~\cite[6(v)]{bfna} with minor modifications. It reduces to an
explicit calculation of $\CA(\fG,\bM,\sqrt\CL)$ for $\fG=\SL(2)$ or $\fG=\PGL(2)$ as in~\cite[Lemma 6.9]{bfna}.
Now any symplectic representation of $\PGL(2)$ is of cotangent type
(since any irreducible representation is
odd-dimensional), so $\CA(\PGL(2),\bM,\sqrt\CL)$ is already computed in~\cite[Lemma 6.9(2)]{bfna}.
For $\SL(2)$, a representation $\bM=\oplus_{k\in\BN}V^k\otimes M^k$ (where $V^k$ is an irreducible
$\SL(2)$-module of dimension $k+1$, and $M^k$ is a multiplicity space) is symplectic iff
$\dim M^k$ is even for $k$ even. Furthermore, it is easy to see that the anomaly cancellation
condition is that the sum $\sum_{\ell\in\BN}\dim M^{4\ell+1}$ must be even. Equivalently, if for a
weight $\chi\in X^*(\SL(2))=\BZ$ we denote by $m_\chi$ the dimension of the $\chi$-weight space of
$\bM$, then $N:=\sum_{\chi\in\BZ}|\chi|m_\chi/4$ must be integral.

Then the same argument as in the proof of~\cite[Lemma 6.9(1)]{bfna} identifies $\CA(\SL(2),\bM,\sqrt\CL)$
as an algebra with~3 generators $\delta,\xi,\eta$ and a single relation
$\xi^2=\delta\eta^2-\delta^{N-1}$ if $N>0$, and $\xi^2=\delta\eta^2+\eta$ if $N=0$.
In particular, it is always integrally closed.
\end{proof}

\section{Odds and ends}

\subsection{An orthosymplectic construction of $\fK$}
\label{ortho}
The invariants $\Sym^\bullet(\fg[-2])^G$ form a free graded commutative algebra $\BC[\Sigma^\bullet_\fg]$
with generators in degrees $4,8,\ldots,4n$ (functions on a graded version of Kostant slice).
Recall the ring object $\fK$ of $D^G(\Sym^\bullet(\fg[-2]))$ introduced in~\S\ref{computa}.
It is well known that $\fK\simeq\BC[G\times\Sigma^\bullet_\fg]$, where $G$ acts in the RHS via
$g\cdot(g',\sigma)=(gg',\sigma)$, and the morphism $G\times\Sigma^\bullet_\fg\to\fg^*[2]$ is
$(g,\sigma)\mapsto\on{Ad}_g\sigma$.

Let us present one more construction of $\fK$. We take a $2n+1$-dimensional complex vector space
$\bM'$ equipped with a nondegenerate symmetric bilinear form $(\, ,)$.
Given $A\in\Hom(\bM',\bM)$ we have the adjoint operator $A^t\in\Hom(\bM,\bM')$.
We have two moment maps
\begin{multline*}
  \bq_\fg\colon\Hom(\bM,\bM')\to\fg\cong\fg^*,\ A\mapsto AA^t;\\
  \bq_{\gvee}\colon\Hom(\bM,\bM')\to\fso(\bM')=\gvee\cong(\gvee)^*,\ A\mapsto A^tA,
\end{multline*}
(we use the Killing form to identify $\fg$ (resp.\ $\gvee$) with its dual), and the
natural action $G^\vee\times G=\SO(\bM')\times\Sp(\bM)\circlearrowright\Hom(\bM',\bM)$.
We choose a maximal unipotent subgroup $U_{G^\vee}\subset G^\vee$ and a regular character
$\psi_{\gvee}$ of its Lie algebra. The hamiltonian reduction
$\BC\big[\Hom(\bM,\bM')\big]/\!\!/\!\!/(U_{G^\vee},\psi_{\gvee})$ carries the residual
action of $G$ and comoment morphism from $\Sym(\fg)$.

Now we consider $\BC\big[\Hom(\bM,\bM')\big]$ as a dg-algebra with trivial differential and with
cohomological grading such that all the generators in $\Hom(\bM,\bM')^*$ have degree~1. We will
denote this dg-algebra by $\BC\big[\Hom(\bM,\bM')[1]\big]$.\footnote{So strictly speaking we should
consider the generators in $\Hom(\bM,\bM')^*$ as having odd parity.}
Then the comoment morphisms are the homomorphisms of dg-algebras
\[\bq_\fg^*\colon\Sym{}\!^\bullet(\fg[-2])\to\BC\big[\Hom(\bM,\bM')[1]\big]
\leftarrow\Sym{}\!^\bullet(\gvee[-2])\ :\bq_{\gvee}^*,\]
and $\BC\big[\Hom(\bM,\bM')[1]\big]/\!\!/\!\!/(U_{G^\vee},\psi_{\gvee})$ is a ring object of
$D^G(\Sym^\bullet(\fg[-2]))$.

\begin{prop}
  We have an isomorphism $\fK\simeq\BC\big[\Hom(\bM,\bM')[1]\big]/\!\!/\!\!/(U_{G^\vee},\psi_{\gvee})$.
\end{prop}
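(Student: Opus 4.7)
The plan is to identify both sides as commutative $G$-equivariant dg-algebras over $\Sym^\bullet(\fg[-2])$ by proving the underlying classical (ungraded) isomorphism and then matching cohomological gradings. The overall strategy is analogous to the way $\fK$ was described at the start of \S\ref{ortho}: exhibit the right-hand side as functions on a concrete $G$-variety with moment map to $\fg^*$, and match with $G \times \Sigma^\bullet_\fg$.

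First, I would apply Kostant's theorem on the dual side. Since $\bq_{\gvee}$ is the moment map for the $G^\vee$-action on $\Hom(\bM,\bM')$, and $\psi_{\gvee}+\fn_{\gvee}^\perp \subset (\gvee)^*$ is a trivial $U_{\gvee}$-principal bundle over the Kostant slice $\Sigma_{\gvee}$, one obtains
\[
\BC[\Hom(\bM,\bM')]/\!\!/\!\!/(U_{G^\vee},\psi_{\gvee}) \;\cong\; \BC\bigl[\bq_{\gvee}^{-1}(\Sigma_{\gvee})\bigr],
\]
as $G$-equivariant algebras equipped with comoment from $\Sym(\fg)$. The task is reduced to identifying this preimage geometrically.

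Next, I would prove that $\bq_{\gvee}^{-1}(\Sigma_{\gvee}) \cong G \times \Sigma_{\gvee}$ as a $G$-variety. Three ingredients: (i) dimension count: $\dim\bq_{\gvee}^{-1}(\Sigma_{\gvee}) = 2n(2n{+}1) - (\dim\gvee - n) = 2n^2 + 2n = \dim G + \on{rk}G$; (ii) freeness of the $G$-action (by precomposition) on the slice preimage, which follows from the fact that $AA^t \in \Sigma_{\gvee}$ is a regular element of $\fso(2n{+}1)$, forcing $A$ to be injective (its nonzero eigenvalues match those of the regular $A^tA$ on $\bM$), and injective $A$ has trivial $\Sp(\bM)$-stabilizer; (iii) trivializing the resulting $G$-torsor over $\Sigma_{\gvee}$ by an explicit algebraic section $\sigma \mapsto A_\sigma$ with $A_\sigma A_\sigma^t = \sigma$, which can be built by diagonalizing $\sigma$ and taking a universal square-root. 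Matching moment maps is then a short calculation: for $g \in \Sp(\bM)$, one has $g^t = g^{-1}$ (with $t$ defined via the symplectic form), so $(g\cdot A_\sigma)^t (g\cdot A_\sigma) = \on{Ad}_g(A_\sigma^t A_\sigma)$, which is $\on{Ad}_g(\sigma')$ under the canonical identification $\Sigma_\fg \cong \Sigma_{\gvee}$ induced by $\kappa$. This matches the description of $\fK$ as $\BC[G \times \Sigma^\bullet_\fg]$ with moment map $(g,\sigma) \mapsto \on{Ad}_g\sigma$.

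Finally, I would lift to dg-algebras with trivial differential. The comoment $\bq_{\gvee}^*:\Sym^\bullet(\gvee[-2]) \to \BC[\Hom(\bM,\bM')[1]]$ is already a map of graded algebras (degree-2 generators map to quadratics in degree-1 generators). Taking $G^\vee$-invariants identifies $\Sym^\bullet(\gvee)^{G^\vee} \cong \BC[\Sigma^\bullet_{\gvee}]$ with generators in degrees $4,8,\ldots,4n$, matching the grading on $\BC[\Sigma^\bullet_\fg]$. The hardest step will be Step 2, specifically verifying freeness of the $G$-action on \emph{all} of $\bq_{\gvee}^{-1}(\Sigma_{\gvee})$ (not merely on the open regular semisimple locus), which requires ruling out non-injective $A$ with $AA^t \in \Sigma_{\gvee}$; the grading matching in the final step is essentially bookkeeping once the geometric step is settled.
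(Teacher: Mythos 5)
Your high-level strategy is the right one and is in fact the same as the paper's: recast the Whittaker reduction as the fibre of the moment map over a Kostant slice (step~1), identify that fibre $G$-equivariantly with $G\times\Sigma^\bullet_\fg$ (step~2), then match gradings (step~3). The paper does exactly this, but it replaces your step~2 with an explicit, algebraic, degree-homogeneous construction: it models the Kostant-slice fibre by the space $Y$ of pairs $(v,A)$ with $v$ cyclic for $A^tA$ and subject to orthogonality normalisations $(v,(A^tA)^kv)=\delta_{k,2n}$, models $G\times\Sigma^\bullet_\fg$ by an auxiliary cyclic-vector space $X$ admitting an explicit Gram--Schmidt isomorphism $\eta\colon X\iso G\times\Sigma_\fg$, and shows $\bar\xi\colon Y/\!\!/G^\vee\to X$, $(v,A)\mapsto(Av,AA^t)$, is an isomorphism via the first fundamental theorem of invariant theory for $\SO(\bM')$.

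There are genuine gaps in your version of step~2. First, the reason your argument gives injectivity of $A$ is slightly off: the eigenvalue-matching between $AA^t$ and $A^tA$ does not help for non-semisimple elements of $\Sigma_{\gvee}$ (e.g.\ the regular nilpotent, which has no nonzero eigenvalues). The correct argument is that a regular element of $\fso_{2n+1}$ has a cyclic vector, hence its minimal polynomial has degree $2n+1$ and its $0$-eigenspace is $1$-dimensional, forcing rank $2n$, and then $\mathrm{rank}(A)\geq\mathrm{rank}(AA^t)=2n$. Second, freeness plus a dimension count does not by itself yield a torsor: you must also show each fibre is a single $G$-orbit (equivalently irreducibility or transitivity). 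This is precisely what the FFT step in the paper accomplishes and what your sketch omits. Third, and most seriously, the proposed section ``diagonalize $\sigma$ and take a universal square root'' is not an algebraic construction; diagonalization and square roots are not morphisms of varieties over $\Sigma_{\gvee}$. One can invoke abstract triviality of $\Sp$-torsors over affine space, but then the grading is no longer ``bookkeeping'': you need the trivialization itself to be $\BG_m$-equivariant for the cohomological grading (with the generators of $\Hom(\bM,\bM')^*$ in degree~$1$ and $\Sym^\bullet(\fg[-2])$ in even degrees), which is exactly why the paper builds $\eta$ from the homogeneous vectors $x^ku$ rather than from an abstract trivialization.
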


\begin{proof}
We consider a locally closed subvariety $Y\subset\bM'\times\Hom(\bM',\bM)[1]$ formed by the
pairs $(v,A)$ such that $v$ is a cyclic vector for $C:=A^tA\circlearrowright\bM'$ satisfying
the orthogonality relations $(v,C^kv)=0$ for any $k<2n$ (note that for odd $k$ this orthogonality
relation is automatically satisfied), and $(v,C^{2n}v)=1$.

Clearly, $Y$ is equipped with the action of $G^\vee\times G=\SO(\bM')\times\Sp(\bM)$ and with
a morphism $\pi\colon Y\to\fg[2]\cong\fg^*[2],\ (v,A)\mapsto AA^t$.
Hence the categorical quotient
$Y/\!\!/G^\vee$ carries the residual action of $G$ and is equipped with the residual morphism
$\ol{\pi}\colon Y/\!\!/G^\vee\to\fg^*[2]$.

One can easily construct an isomorphism
$\Hom(\bM,\bM')[1]/\!\!/\!\!/(U_{G^\vee},\psi_{\gvee})\simeq Y/\!\!/G^\vee$.
We will construct an isomorphism $\BC[Y]^{G^\vee}\simeq\fK$. More precisely, we will construct an
isomorphism $\BC[Y]^{G^\vee}\simeq\BC[G\times\Sigma_\fg^\bullet]$ with gradings disregarded, and it
will be immediate to check that it respects the gradings (along with the $G$-action and the comoment
morphism).

We consider a locally closed subvariety $X\subset\bM\times\fg$ formed by the pairs
$(u,x)$ such that $u$ is a cyclic vector of $x$ satisfying the orthogonality relations
$\langle u,x^ku\rangle=0$ for any $k<2n-1$ (note that for even $k$ this orthogonality relation is
automatically satisfied), and $\langle u,x^{2n-1}u\rangle=1$.

We have an isomorphism $\eta\colon X\iso G\times\Sigma_\fg$ defined as follows.
The second factor of $\eta(u,x)$ is the image of $x$ in $\fg/\!\!/G\cong\fg^*/\!\!/G=\Sigma_\fg$.
The first factor of $\eta(u,x)$ is the symplectic $2n\times 2n$-matrix with columns
$\sC_0,\sC_1,\ldots,\sC_{2n-1}$ defined as follows. First, we set $\sC_k=x^ku$ for $k=0,\ldots,n$.
Second, we set $\sC_{n+1}=(-1)^n(x^{n+1}u-\langle u,x^{2n+1}u\rangle x^{n-1}u)$ to make sure
$\langle \sC_{n-2},\sC_{n+1}\rangle=1$ and $\langle \sC_n,\sC_{n+1}\rangle=0$.
Third, we define $\sC_{n+2}$ as $(-1)^{n-1}x^{n+2}u$ plus an
appropriate linear combination of $x^nu$ and $x^{n-2}u$ to make sure that
$\langle\sC_{n-3},\sC_{n+2}\rangle=1$, and $\sC_{n+2}$ is orthogonal to all the other previous columns.
Then we continue to apply this `Gram-Schmidt orthogonalization process' to
$x^{n+3}u,\ldots,x^{2n-1}u$ in order to obtain the desired columns $\sC_{n+3},\ldots,\sC_{2n-1}$.

Now we consider a morphism $\xi\colon Y\to X,\ (v,A)\mapsto(u=Av,\ x=AA^t)$. It factors through
$Y\to Y/\!\!/G^\vee\stackrel{\ol{\xi}}{\longrightarrow} X$, and it follows from the first
fundamental theorem of the invariant theory for $\SO(\bM')$ that $\ol{\xi}$ is an isomorphism,
cf.~\cite[proof of Lemma 2.8.1.(a)]{bft}.
\end{proof}

\subsection{The universal ring object of cotangent type}
We choose a pair of transversal Lagrangian subspaces $\bM=\bN\oplus\bN^*$. They give rise
to a (Siegel) Levi subgroup $\fG=\GL(\bN)\subset G=\Sp(\bM)$. The corresponding embedding of the
affine Grassmannians $\Gr_\fG\hookrightarrow\Gr_G$ is denoted by $s$. The pullback $s^*\CalD$ of the
determinant line bundle of $\Gr_G$ is the square of the determinant line bundle of $\Gr_\fG$.
Hence the pullback of the gerbe $\tGr_G$ trivializes, and the pullback $\sR:=s^!\RH\fR$ can be viewed
as an object of $D_{\fG_\CO}(\Gr_\fG)$ (no twisting). It is nothing but the ring object considered
in~\cite{bfnc}: the direct image of the dualizing sheaf of the variety of triples associated to
the representation $\bN$ of $\fG$ in~\cite{bfna}.

We will compute the image of $\sR$ under the derived Satake equivalence
$\Phi\colon D_{\fG_\CO}(\Gr_\fG)\iso D^\fG(\Sym^\bullet(\fgl(\bN)[-2]))$.
To this end, similarly to~\S\ref{ortho}, we introduce another copy $\bN'$ of an
$n$-dimensional complex vector space, and consider the moment map
\begin{multline*}
  \bq\colon\Hom(\bN',\bN)\times\Hom(\bN,\bN')\to\fgl(\bN)\times\fgl(\bN')\cong
  \fgl(\bN)^*\times\fgl(\bN')^*,\\ (A,B)\mapsto(AB,BA),
\end{multline*}
(we use the trace form to identify $\fgl(\bN)$ (resp.\ $\fgl(\bN')$) with its dual), and the
natural action $\GL(\bN')\times\GL(\bN)\circlearrowright\Hom(\bN',\bN)\times\Hom(\bN,\bN')$.
We choose a maximal unipotent subgroup $U\subset\GL(\bN')$ and a regular character $\psi$ of its
Lie algebra. The hamiltonian reduction
$\BC\big[\Hom(\bN',\bN)\times\Hom(\bN,\bN')\big]/\!\!/\!\!/(U,\psi)$ carries the
residual action of $\GL(\bN)$ and comoment morphism from $\Sym(\fgl(\bN))$.

Now we consider $\BC\big[\Hom(\bN',\bN)\times\Hom(\bN,\bN')\big]$ as a dg-algebra with trivial
differential and with cohomological grading such that all the generators in
$\Hom(\bN',\bN)^*\oplus\Hom(\bN,\bN')^*$ have degree~1. We will denote this algebra by
$\BC\big[\Hom(\bN',\bN)[1]\times\Hom(\bN,\bN')[1]\big]$.\footnote{So strictly speaking we should
consider the generators in $\Hom(\bN',\bN)^*\oplus\Hom(\bN,\bN')^*$ as having odd parity.}
Then the comoment morphism is a homomorphism of dg-algebras
\[\bq^*\colon\Sym{}\!^\bullet(\fgl(\bN)[-2]\oplus\fgl(\bN')[-2])
\to\BC\big[\Hom(\bN',\bN)[1]\times\Hom(\bN,\bN')[1]\big],\]
and $\BC\big[\Hom(\bN',\bN)[1]\times\Hom(\bN,\bN')[1]\big]/\!\!/\!\!/(U,\psi)$
is a ring object of $D^\fG(\Sym^\bullet(\fgl(\bN)[-2]))$.

\begin{prop}
  We have an isomorphism
  \[\Phi\sR\simeq\BC\big[\Hom(\bN',\bN)[1]\times\Hom(\bN,\bN')[1]\big]/\!\!/\!\!/(U,\psi).\]
\end{prop}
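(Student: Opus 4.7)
The plan is to mirror closely the argument of §\ref{ortho}, which identified $\fK=\Phi\RH\CR$ with $\BC[\Hom(\bM,\bM')[1]]/\!\!/\!\!/(U_{G^\vee},\psi_{\gvee})$ by means of an auxiliary subvariety $Y\subset\bM'\times\Hom(\bM',\bM)[1]$ with a cyclic-vector condition for $A^tA\in\fso(\bM')$, together with an explicit Gram--Schmidt identification $Y/\!\!/\SO(\bM')\cong G\times\Sigma^\bullet_\fg$. The current proposition is the linear/cotangent analog, and I would prove it in two steps.

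First, I would establish the intrinsic identification $\Phi\sR\cong\fK_\sG$, where
\[
\fK_\sG:=\bigl(\BC[\sG]\otimes\Sym^\bullet(\fgl(\bN)[-2])\bigr)/\!\!/\!\!/(U_\sG,\psi_\sG)
\]
is the untwisted $\GL(\bN)$-analog of $\fK$. By the cotangent Lemma of §4.2, $\sR=\CHom(\BC[\bN_\CO],\BC[\bN_\CO])$ in $D_{\sG_\CO}(\Gr_\sG)$, since in the cotangent case the Weyl algebra of $\bM_\CK$ is $\Dmod(\bN_\CK)$ and $\BC[\bN_\CO]$ is the vacuum module. The argument of Theorem~\ref{Phirka} then goes through essentially verbatim in the untwisted $\GL(\bN)$-setting: it suffices to produce an untwisted analog of the pairing formula~\eqref{glob coh}, namely a canonical isomorphism $H^\bullet_{\sG_\CO}(\Gr_\sG,\CF\overset{!}{\otimes}\sR)\cong\varkappa_\sG\Phi_\sG\CF$, from which $\Phi\sR\cong\fK_\sG$ follows by Yoneda.

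Second, I would produce an explicit isomorphism of $\fK_\sG$ with the claimed Whittaker reduction of $\Hom(\bN',\bN)\oplus\Hom(\bN,\bN')$ by the $\GL$-analog of the cyclic-vector argument of §\ref{ortho}. Introduce the locally closed subvariety
\[
Y\subset\bN'\times\Hom(\bN',\bN)[1]\times\Hom(\bN,\bN')[1]
\]
of triples $(v,A,B)$ with $v$ cyclic for $C:=BA\in\fgl(\bN')$ subject to an appropriate normalization (the linear analog of the orthogonality conditions in §\ref{ortho}). The map $\xi\colon Y\to X\subset\bN\times\fgl(\bN)$ sending $(v,A,B)\mapsto(u:=Av,\,x:=AB)$ lands in the subvariety $X$ of pairs $(u,x)$ with $u$ cyclic for $x$ and the matching normalization, and an elementary triangularization furnishes $\eta\colon X\iso\sG\times\Sigma^\bullet_{\fgl(\bN)}$, so that $\BC[X]\cong\fK_\sG$. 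The first fundamental theorem of invariant theory for $\GL(\bN')$ — asserting that invariants of $\Hom(\bN',\bN)\oplus\Hom(\bN,\bN')$ are generated by the entries of $AB$ together with the traces of powers of $BA$ — then furnishes $\overline{\xi}\colon Y/\!\!/\GL(\bN')\iso X$, and a direct inspection identifies this $\GL(\bN')$-quotient with $\BC[\Hom(\bN',\bN)[1]\times\Hom(\bN,\bN')[1]]/\!\!/\!\!/(U,\psi)$.

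The main obstacle I expect is the intrinsic identification in the first step. Unlike in §\ref{ortho}, where $\RH\CR$ is ``universal'' and Theorem~\ref{Phirka} is already in hand, here $\sR=s^!\RH\CR$ is a Siegel pullback, and one must either show that the pairing formula~\eqref{glob coh} pulls back cleanly along $s^!$ to its untwisted $\GL(\bN)$-version, or replay the argument of §\ref{computa} directly in the untwisted setting. Once this is in place the second step is genuinely easier than its orthosymplectic counterpart — no symmetric form intervenes, so both the triangularization and the invariant theory are more elementary — and the matching of cohomological gradings, residual $\sG$-equivariance, and comoment morphisms follows by direct inspection.
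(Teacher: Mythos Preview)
Your second step---identifying the Whittaker reduction with a $\GL(\bN')$-quotient of a cyclic-vector variety and then with $\sG\times\Sigma_{\fgl(\bN)}$---parallels~\S\ref{ortho} and would go through. But the paper takes a completely different route to the first step: rather than establishing $\Phi\sR\cong\fK_\sG$ by an analog of Theorem~\ref{Phirka}, it invokes the mirabolic Satake equivalence of~\cite{bfgt}. One writes $\sR=u_0^*(E_0\overset!\oast\bomega_{\Gr_{\GL(\bN)\times\bN_\CK}}\overset!\oast E_0)$ as a triple convolution in the mirabolic Hecke category, transports it to the coherent side via~\cite[Theorem~3.6.1]{bfgt}, computes the convolution there using the double cyclic quiver $\tilde{A}_3$, and reads off $\BC[Z]^{\GL(\bN')}$ directly (here $Z$ is your $Y$, taken as the \emph{open} cyclic locus, no normalization imposed). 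The intermediate object $\fK_\sG$ never appears.

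The obstacle you flag in your first step is a genuine gap. The pairing identity $H^\bullet_{\sG_\CO}(\Gr_\sG,\CF\overset{!}{\otimes}\sR)\cong\varkappa_\sG\Phi_\sG\CF$ is \emph{equivalent} by Yoneda to $\Phi\sR\cong\fK_\sG$, so it cannot be taken as input. In the symplectic case the analog~\eqref{glob coh} is a theorem about $\RT(\Theta)$ from~\cite{dlyz}, combined with the separate identification $\RH\CR\simeq\RT(\Theta)$ of Proposition~\ref{coprinus}; in the untwisted $\GL(\bN)$ setting there is no Theta-sheaf to play this role, and the available untwisted statement~\eqref{sat-whit} pairs with $\bomega_{\Gr_\sG}$, not with $\sR$. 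Pulling~\eqref{glob coh} back along $s$ is not meaningful either, since it concerns specific objects on $\Gr_G$ rather than a functorial identity preserved by restriction to a Levi. The mirabolic Satake input of~\cite{bfgt} is precisely what the paper uses in place of the missing Theta-type ingredient.
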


\begin{proof}
We consider an open
subvariety $Z\subset\bN'\times\Hom(\bN',\bN)[1]\times\Hom(\bN,\bN')[1]$ formed by the triples
$(v,A,B)$ such that $v$ is a cyclic vector for $BA\circlearrowright\bN'$. It is equipped with a
morphism $\varpi\colon Z\to\fgl(\bN)[2]\cong
 \fgl(\bN)^*[2],\ (v,A,B)\mapsto AB$, and a natural action of $\GL(\bN')\times\GL(\bN)$.
 Hence the categorical quotient
$Z/\!\!/\GL(\bN')$ carries the residual action of $\GL(\bN)$ and is equipped with the residual
morphism $\ol{\varpi}\colon Z/\!\!/\GL(\bN')\to\fgl(\bN)^*[2]$.

One can easily construct an isomorphism
\[\big(\Hom(\bN',\bN)[1]\times\Hom(\bN,\bN')[1]\big)/\!\!/\!\!/(U,\psi)\simeq Z/\!\!/\GL(\bN').\]
It remains to construct an isomorphism $\Phi\sR\simeq\BC[Z]^{\GL(\bN')}$ compatible with the comoment
morphisms from $\Sym^\bullet(\fgl(\bN)[-2])$ and with the actions of $\GL(\bN)$.

  The desired isomorphism is a corollary of~\cite[Theorem 3.6.1]{bfgt}. Indeed, in notation
of~\cite[\S3.2, \S3.10]{bfgt}, we have
$\sR=u_0^*(E_0\overset!\oast\bomega_{\Gr_{\GL(\bN)\times\bN_\CK}}\overset!\oast E_0)$ by comparison of
definitions (say $E_0$ stands for the constant sheaf on $\Gr^0_{\GL(\bN)}\times\bN_\CO$,
see~\cite[\S3.9]{bfgt}, while $\bomega$ stands for the dualizing sheaf). So we have to compute this
triple convolution in terms of the mirabolic
Satake equivalence. The corresponding convolution on the coherent side is defined
in~\cite[\S\S3.4,3.5]{bfgt}. The convolution of~3 objects is computed via the double cyclic quiver
$\tilde{A}_3$ on~4 vertices, cf.~\cite[(3.4.1)]{bfgt}. The result of this computation is
nothing but $\BC[Z]^{\GL(\bN')}$.
\end{proof}

\subsection{Baby version}
\label{baby}
Let $P\subset G$ stand for the stabilizer of the Lagrangian subspace $\bN\subset\bM$
(Siegel parabolic). Let $P'\subset P$ stand for the derived subgroup.
We consider the Lagrangian Grassmannian $\LGr_\bM=G/P$. The $\mu_2$-gerbe of square roots of the ample
determinant line bundle $\CalD$ over $\LGr_\bM$ is denoted $\tLGr_\bM$. The group $P'$ acts on $\tLGr_\bM$.
We consider the derived constructible category $D^b_{P'}(\tLGr_\bM)$ of genuine sheaves on $\tLGr_\bM$
(such that $-1\in\mu_2$ acts by $-1$). An open sub-gerbe $\CT\hookrightarrow\tLGr_\bM\times\tLGr_\bM$
is formed by all the pairs of transversal Lagrangian subspaces in $\bM$.
We denote by $\tLGr_\bM\stackrel{p}{\leftarrow}\CT\stackrel{q}{\rightarrow}\tLGr_\bM$ the two projections,
and we define the Radon Transform $\RT:=p_*q^!\colon D^b_{P'}(\tLGr_\bM)\to D^b_{P'}(\tLGr_\bM)$.
Finally, we consider the $P'$-equivariant derived category $\Dmo^{P'}(\LGr_\bM)$ of $D$-modules on
$\LGr_\bM$ twisted by the negative square root of the determinant line bundle $\CalD$.
We have the Riemann--Hilbert equivalence $\RH\colon\Dmo^{P'}(\LGr_\bM)\iso
D^b_{P'}(\tLGr_\bM)$.

The Weyl algebra of the symplectic space $\bM$ is denoted by $\CW_\bM$.
The homomorphism $\fg=\fsp(\bM)\to\on{Lie}\CW_\bM$ (oscillator representation) goes back to~\cite{s},
see~\cite[\S2]{h} and~\cite[\S1.1]{la}. The restriction of the $\CW_\bM$-module $\BC[\bN]$ to
$\fg$ is a direct sum of two irreducible modules $L^{\lambda_g}\oplus L^{\lambda_s}$ (even and odd
functions). Here in the standard orthonormal basis $w_1^*,\ldots,w_n^*$ of a
Cartan Lie subalgebra of $\on{Lie}P$ we have $\lambda_g=-\frac12\sum_{i=1}^nw_i$,
and $\lambda_s=\lambda_g-w_n$.

The baby version $S$ of $\Theta$-sheaf, introduced in~\cite[Definition 2]{l} and studied
in~\cite[\S3]{ll}, is the direct sum of IC-sheaves of two $P$-orbits in $\tLGr_\bM\colon S_g$ of the
open orbit, and $S_s$ of the codimension~1 orbit. We have irreducible twisted $D$-modules
$\CS_g=\tau_{\geq0}\Loc L^{\lambda_g},\ \CS_s=\tau_{\geq0}\Loc L^{\lambda_s}$,
and $\RH(\CS_g)=S_g,\ \RH(\CS_s)=S_s$.

Finally, $\RT(S)$ is isomorphic to $S$ up to a shift. More precisely, we have
$\RT(S_g)\simeq S_s[n^2+2]$, and $\RT(S_s)\simeq S_g[n^2]$
for $n$ odd, while for $n$ even we have $\RT(S_s)\simeq S_s[n^2+2]$ and $\RT( S_g)\simeq S_g[n^2]$.
This follows e.g.\ from~\cite[Theorem 10.7]{ly}, since the Radon Transform is the convolution
with the $*$-extension of the sign local system from the open orbit in $\tLGr_\bM$.

\appendix

\section{Localization and the Radon transform}
\label{gd}

\centerline{By Gurbir Dhillon}

\bigskip

\subsection{Lie groups and algebras} Let $G$ be an almost simple, simply
connected, group and $\fg$ its  Lie algebra.\footnote{The results discussed
below straightforwardly generalize to any connected reductive group $G$.} Let
$\kappa$ be a level, i.e. an $\on{Ad}$-invariant bilinear form on $\fg$, and
consider the associated affine Lie algebra
\[
0 \rightarrow \mathbb{C} \cdot \mathbf{1} \rightarrow \gk \rightarrow
\fg(\!(t)\!) \rightarrow 0.
\]

\subsection{Levels}

Let us write $\kappa_{c}$ for the critical level, i.e., minus one half times
the Killing form. We recall that a level $\kappa$ is called {positive} if
\[
\kappa \notin  \kappa_c + \mathbb{Q}^{\geqslant 0} \cdot \kappa_c.
\]
Similarly, a level $\kappa$ is called {negative} if
\[
\kappa \notin \kappa_c - \mathbb{Q}^{\geqslant 0} \cdot \kappa_c.
\]
Note that, in this convention, an irrational multiple of the critical level is
considered both positive and negative.

\subsection{Localization on the thin Grassmannian} For any level $\kappa$, one
has a $\DMOD(G_{\CK})$-equivariant functor of global sections
\[
\Gamma_\kappa\colon  \DMOD(\Gr_G) \rightarrow \gk\mod.
\]
It is the unique equivariant functor sending the delta D-module at the trivial
coset $\delta_e$ to the vacuum module, i.e., the parabolically induced module
\[
\mathbb{V}_{\kappa} := \on{pind}_{\fg}^{\gk} \mathbb{C}.
\]

The functor admits a right adjoint. Moreover, after passing to spherical
vectors, it also admits a left adjoint. That is, one has an adjunction
\[
\on{Loc}_\kappa\colon  \gk\mod^{G_\CO} \rightleftarrows \DMOD(\Gr_G)^{G_\CO}:
\Gamma_\kappa.
\]

\subsection{Localization on the thick Grassmannian}
\label{A4}
Let us denote the usual and
dual categories of D-modules on the thick Grassmannian by
\[
\DMOD(\GR_G)_! \quad \text{and} \quad \DMOD(\GR_G)_*.
\]
By definition, if we let $\mathbf{U}_i$ range through the quasicompact open
subschemes of $\GR_G$, we have
\[
\DMOD(\GR_G)_! \simeq \underset i \varprojlim \DMOD(\mathbf{U}_i) \quad
\text{and} \quad \DMOD(\GR_G)_* \simeq \underset i \varinjlim
\DMOD(\mathbf{U}_i),
\]
where the transition maps are given by $!$-restriction and $*$-pushforward,
respectively.

Following Arkhipov--Gaitsgory \cite{ag}, one has $\DMOD(G_\CK)$-equivariant
localization and global sections functors
\begin{equation} \label{e:gamma1}
	\mathbf{Loc}_\kappa\colon  \gk\mod \rightarrow \DMOD(\GR_G)_! \quad \text{and}
	\quad \mathbf{\Gamma_\kappa}\colon  \DMOD(\GR_G)_* \rightarrow \gk\mod.
\end{equation}
%
Upon passing to spherical vectors, one has the following adjunctions, which are
sensitive to the sign of the level. If $\kappa$ is positive,
$\mathbf{Loc}_\kappa$ admits a right adjoint of (smooth) global sections
\begin{equation} \label{e:gamma2}
	\mathbf{Loc}_\kappa\colon  \gk\mod^{G_\CO} \rightleftarrows
	\DMOD(\GR_G)_!^{G_\CO}\colon \mathbf{\Gamma_\kappa}.
\end{equation}
Similarly, if $\kappa$ is negative, $\mathbf{\Gamma_\kappa}$ admits a left
adjoint
\begin{equation} \label{e:gamma3}
	\mathbf{Loc}_\kappa\colon  \gk\mod^{G_\CO} \rightleftarrows
	\DMOD(\GR_G)_*^{G_\CO}\colon \mathbf{\Gamma_\kappa}.
\end{equation}
We emphasize that the sources of the functors denoted $\mathbf{\Gamma_\kappa}$
in~\eqref{e:gamma1} and~\eqref{e:gamma2} are distinct, as are the
sources of the functors denoted $\mathbf{Loc}_\kappa$ in~\eqref{e:gamma1} and~\eqref{e:gamma3}.

\subsection{Radon Transform}
\label{A5}
For any level $\kappa$, consider the Radon transform functors
\begin{align*}
	& \on{RT}_!\colon  \DMOD(\Gr_G) \rightarrow \DMOD(\GR_G)_! \quad \text{and} \\ &
	\on{RT}_*\colon  \DMOD(\Gr_G) \rightarrow \DMOD(\GR_G)_*.
\end{align*}
These are by definition $\DMOD(G_\CK)$-equivariant, and are characterized by
sending $\delta_e$ to the $!$- and $*$-extensions of the constant intersection
cohomology D-module \[\mathbb{C}[G_\CO \cdot G_{\mathbb{C}[t^{-1}]} /
G_{\mathbb{C}[t^{-1}]}],\] respectively. In what follows, we denote these
objects by $j_!$ and $j_*$, respectively.

It is standard that $\on{RT}_!$ and $\on{RT}_*$  induce equivalences on
spherical vectors, and in particular are fully faithful embeddings.

\subsection{Global sections and the Radon transform: negative level}

We now turn to the relationship between the global sections functors on the
thin and thick Grassmannians and the Radon transform. We begin with the case of
$\kappa$ negative.

\begin{prop} \label{p:neglevel}  Suppose $\kappa$ is negative. Then the functor
of global sections on the thin Grassmannian
	\begin{equation} \label{e:option1}
		\Gamma_\kappa\colon  \DMOD(\Gr_G) \rightarrow \gk\mod
	\end{equation}
	is canonically $\DMOD(G_{\CK})$-equivariantly equivalent to the composition
	\begin{equation} \label{e:option2}
		\DMOD(\Gr_G) \xrightarrow{\on{RT}_*} \DMOD(\GR_G)_*
		\xrightarrow{\mathbf{\Gamma_\kappa}} \gk\mod.
	\end{equation}

\end{prop}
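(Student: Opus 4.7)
My plan is to exploit the $\DMOD(G_{\CK})$-equivariance of both functors and reduce the comparison to a computation of their values on the generator $\delta_e$. Recall that $\DMOD(\Gr_G) \simeq \DMOD(G_{\CK})^{G_{\CO}}$ as a $\DMOD(G_{\CK})$-module category, so any $\DMOD(G_{\CK})$-equivariant functor out of $\DMOD(\Gr_G)$ is determined, up to canonical equivalence, by its value on $\delta_e$ together with the $G_{\CO}$-equivariance datum. Both $\Gamma_\kappa$ and $\mathbf{\Gamma_\kappa} \circ \on{RT}_*$ are $\DMOD(G_{\CK})$-equivariant (the former by construction, the latter as a composition of equivariant functors), so the proposition reduces to constructing a canonical $G_{\CO}$-equivariant isomorphism between their values on $\delta_e$.

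By the defining property of $\Gamma_\kappa$ recalled in the excerpt, $\Gamma_\kappa(\delta_e) \simeq \mathbb{V}_\kappa$, while by the construction of the Radon transform in~\S\ref{A5}, $\on{RT}_*(\delta_e) \simeq j_*$, the $*$-extension of the constant D-module from the open cell $U = G_{\CO} \cdot G_{\mathbb{C}[t^{-1}]} / G_{\mathbb{C}[t^{-1}]}$. The entire content of the proposition is therefore the canonical $G_{\CO}$-equivariant identification
\[
\mathbf{\Gamma_\kappa}(j_*) \simeq \mathbb{V}_\kappa.
\]

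To establish this identification I would invoke the negative-level adjunction $\mathbf{Loc}_\kappa \dashv \mathbf{\Gamma_\kappa}$ of~\eqref{e:gamma3}. Applying the same $\DMOD(G_{\CK})$-equivariance reduction to $\mathbf{Loc}_\kappa$, its value on $\mathbb{V}_\kappa$ is pinned down by the $G_{\CO}$-equivariance datum; the Arkhipov--Gaitsgory construction at negative $\kappa$ identifies this value with $j_*$ (for positive $\kappa$, one would instead obtain $j_!$ via~\eqref{e:gamma2}, matching the opposite adjunction). Granting $\mathbf{Loc}_\kappa(\mathbb{V}_\kappa) \simeq j_*$, the unit of the adjunction produces a canonical $G_{\CO}$-equivariant morphism $\mathbb{V}_\kappa \to \mathbf{\Gamma_\kappa}\mathbf{Loc}_\kappa(\mathbb{V}_\kappa) \simeq \mathbf{\Gamma_\kappa}(j_*)$, and the assertion that it is an isomorphism is a negative-level incarnation of the Kashiwara--Tanisaki localization theorem, which provides full faithfulness of $\mathbf{Loc}_\kappa$ on the block generated by the vacuum.

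The main obstacle, then, is the twofold input: the identification $\mathbf{Loc}_\kappa(\mathbb{V}_\kappa) \simeq j_*$ and the fact that the unit of $\mathbf{Loc}_\kappa \dashv \mathbf{\Gamma_\kappa}$ is an equivalence on $\mathbb{V}_\kappa$. Both depend essentially on the negativity of $\kappa$: it is precisely this sign condition that forces the Arkhipov--Gaitsgory localization to produce the $*$-extension (rather than $j_!$) and that guarantees convergence of the global sections spectral sequence on the thick Grassmannian, so that the adjunction unit becomes an iso on the vacuum line. Once these two inputs are in place, the displayed identification $\mathbf{\Gamma_\kappa}(j_*) \simeq \mathbb{V}_\kappa$ follows, and pulling back through the $\DMOD(G_{\CK})$-equivariant universal property yields the required equivalence of functors $\Gamma_\kappa \simeq \mathbf{\Gamma_\kappa} \circ \on{RT}_*$.
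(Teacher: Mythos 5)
Your reduction is correct up to the point where the proposition becomes the identification $\mathbf{\Gamma_\kappa}(j_*) \simeq \mathbb{V}_\kappa$, and this agrees with the paper's proof. After that, however, you take a genuinely different route through the adjunction $\mathbf{Loc}_\kappa \dashv \mathbf{\Gamma_\kappa}$ of~\eqref{e:gamma3}, and I don't think it holds together as written.

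The first difficulty is your assertion that ``the Arkhipov--Gaitsgory construction at negative $\kappa$ identifies $\mathbf{Loc}_\kappa(\mathbb{V}_\kappa)$ with $j_*$.'' This conflates two functors that happen to share the name $\mathbf{Loc}_\kappa$: the Arkhipov--Gaitsgory localization of~\eqref{e:gamma1} lands in $\DMOD(\GR_G)_!$ for \emph{any} level (the $!$-extension side), whereas the functor in~\eqref{e:gamma3} that you actually want is defined purely abstractly as the left adjoint to the spherical global sections $\mathbf{\Gamma_\kappa}\colon\DMOD(\GR_G)_*^{G_\CO}\to\gk\mod^{G_\CO}$ --- the paper flags these as distinct immediately after~\eqref{e:gamma3}. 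There is no reference in~\cite{ag} computing the abstract left adjoint of~\eqref{e:gamma3} on the vacuum module; establishing $\mathbf{Loc}_\kappa(\mathbb{V}_\kappa)\simeq j_*$ from scratch essentially requires knowing that $\Hom(j_*,-)\simeq\Hom(\mathbb{V}_\kappa,\mathbf{\Gamma_\kappa}(-))$, which is close to the statement being proved. The second difficulty is your appeal to a ``negative-level incarnation of Kashiwara--Tanisaki'': Kashiwara--Tanisaki~\cite{kt} treat \emph{positive} rational level, and the paper is careful to invoke them only in the positive-level Proposition~\ref{p:locpos}. At negative level there is no such localization theorem to cite.

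The paper's actual argument is more elementary and avoids both issues entirely: having reduced to $\mathbf{\Gamma_\kappa}(j_*)$, one simply observes that this is the algebra of functions on the big cell $G_\CO\cdot G_{\BC[t^{-1}]}/G_{\BC[t^{-1}]}$; the constant function $1$ is $G_\CO$-invariant and hence induces a $\gk$-module map $\mathbb{V}_\kappa\to\mathbf{\Gamma_\kappa}(j_*)$; the characters of the two sides visibly agree; and --- here is where negativity actually enters --- at negative level the vacuum module $\mathbb{V}_\kappa$ is irreducible, so the map is an isomorphism. This direct computation with characters and irreducibility replaces your entire adjunction argument, and in particular it never needs an identification of $\mathbf{Loc}_\kappa(\mathbb{V}_\kappa)$. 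If you want to salvage your approach you would need an independent proof that the abstract left adjoint sends $\mathbb{V}_\kappa$ to $j_*$ at negative level, which is not in the literature you cite.
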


\begin{proof}It is enough to show that the composition \eqref{e:option2}  sends
$\delta_e$ to the vacuum module $\mathbb{V}_\kappa$. Unwinding definitions, we
have
	\[
	\mathbf{\Gamma_\kappa} \circ \on{RT}_* (\delta_e) \simeq
	\mathbf{\Gamma_\kappa}( j_*) \simeq \mathbb{C}[G_\CO \cdot
	G_{\mathbb{C}[t^{-1}]} / G_{\mathbb{C}[t^{-1}]}],
	\]
	i.e., $\delta_e$ is sent to the algebra of functions on the big cell. The
	function which is identically one on the cell yields, by its $G_\CO$
	invariance, a canonical map of $\gk$-modules
	\[
	\mathbb{V}_\kappa \rightarrow \mathbb{C}[G_\CO \cdot G_{\mathbb{C}[t^{-1}]}
	/ G_{\mathbb{C}[t^{-1}]}].
	\]
	It is straightforward to see that the characters of the two appearing
	modules coincide. Moreover, by our assumption on $\kappa$,
	$\mathbb{V}_\kappa$ is irreducible, hence the map is an isomorphism, as
	desired.  \end{proof}

\begin{rem} The functions on the big cell, at any level, are canonically
isomorphic to the contragredient dual of the vacuum. In particular, at a
positive rational level $\kappa$, the assertion of  Proposition
\ref{p:neglevel} is false. We will meet its corrected variant in Proposition
\ref{p:locpos} below.
\end{rem}

By taking the statement of Proposition \ref{p:neglevel}, passing to spherical
invariants, and then left adjoints, we deduce the following.

\begin{cor} \label{c:locthin} Suppose $\kappa$ is negative. Then the
localization functor on the thin Grassmannian
	\[
	\on{Loc}_\kappa\colon  \gk\mod^{G_\CO} \rightarrow \DMOD(\Gr_G)^{G_\CO}
	\]
	is canonically $\DMOD(G_\CO \backslash G_\CK / G_\CO)$-equivariantly
	equivalent to the composition
	\[
	\gk\mod^{G_\CO} \xrightarrow{\mathbf{Loc}_\kappa} \DMOD(\GR_G)_*
	\xrightarrow{ \on{RT}_*^{-1}} \DMOD(\Gr_G).
	\]
\end{cor}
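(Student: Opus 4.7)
The plan is to derive the statement formally from Proposition~\ref{p:neglevel} by restricting to $G_\CO$-invariants and passing to left adjoints. Since Proposition~\ref{p:neglevel} gives an equivalence of $\DMOD(G_\CK)$-equivariant functors
\[
\Gamma_\kappa \simeq \mathbf{\Gamma_\kappa} \circ \on{RT}_*\colon \DMOD(\Gr_G) \to \gk\mod,
\]
the first step is simply to take $G_\CO$-invariants on source and target. This produces an equivalence of $\DMOD(G_\CO \backslash G_\CK / G_\CO)$-equivariant functors
\[
\Gamma_\kappa \simeq \mathbf{\Gamma_\kappa} \circ \on{RT}_*\colon \DMOD(\Gr_G)^{G_\CO} \to \gk\mod^{G_\CO},
\]
where on the middle category $\DMOD(\GR_G)_*^{G_\CO}$ the two factors are now the spherical versions of the functors recalled in Sections~\ref{A4} and~\ref{A5}.

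The second step is to observe that all three functors appearing in the composition admit left adjoints on spherical vectors. For $\Gamma_\kappa$ this is standard, with left adjoint $\on{Loc}_\kappa$. For $\mathbf{\Gamma_\kappa}$ the hypothesis that $\kappa$ is negative is crucial: it is exactly under this assumption that~\eqref{e:gamma3} provides the left adjoint $\mathbf{Loc}_\kappa$. Finally, $\on{RT}_*$ is an equivalence on spherical vectors (by the remark at the end of Section~\ref{A5}), so its left adjoint is its inverse $\on{RT}_*^{-1}$.

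The third step is to pass to left adjoints in the equivalence. This is purely formal: a natural isomorphism between two continuous functors induces a natural isomorphism between their left adjoints (whenever these exist), and the left adjoint of a composition is the composition of left adjoints in the reverse order. Applying this to our equivalence yields a canonical isomorphism
\[
\on{Loc}_\kappa \simeq \on{RT}_*^{-1} \circ \mathbf{Loc}_\kappa\colon \gk\mod^{G_\CO} \to \DMOD(\Gr_G)^{G_\CO},
\]
which is the desired statement. The equivariance is preserved at each stage: the original equivalence is $\DMOD(G_\CK)$-equivariant, which descends to $\DMOD(G_\CO \backslash G_\CK / G_\CO)$-equivariance after taking $G_\CO$-invariants, and passage to left adjoints respects this structure.

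The only substantive point requiring care is the interaction between taking adjoints and the $\DMOD(G_\CO \backslash G_\CK / G_\CO)$-action: one must confirm that the left adjoints $\on{Loc}_\kappa$, $\mathbf{Loc}_\kappa$, and $\on{RT}_*^{-1}$ are genuinely $\DMOD(G_\CO \backslash G_\CK / G_\CO)$-equivariant, and not merely abstractly natural. This is the main thing to verify, but it follows from the general principle that in the $(\infty,2)$-categorical setting of dg-categories with a strong action of $\DMOD(G_\CO \backslash G_\CK / G_\CO)$, an adjoint of an equivariant functor inherits a canonical equivariant structure when it exists; one can alternatively check it by hand by noting that each of the three adjoint pairs in question is realized by a monadic or comonadic construction compatible with the bi-equivariant convolution structure.
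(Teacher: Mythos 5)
Your proposal follows exactly the paper's route: the paper deduces Corollary~\ref{c:locthin} from Proposition~\ref{p:neglevel} by passing to spherical vectors and then to left adjoints, which is precisely your three-step argument, just spelled out in more detail (including the correct observation that negativity of $\kappa$ is what secures the left adjoint $\mathbf{Loc}_\kappa$ via~\eqref{e:gamma3}, and that $\on{RT}_*$ is invertible on spherical vectors). There is no gap and no divergence from the paper.
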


\subsection{Global sections and the Radon transform: positive level}
\label{A7} Let us now turn to the case of $\kappa$ of positive level. As we
will see momentarily, the analog of the approach we took at negative level
requires knowing the global sections of a $!$-extension, and is therefore less
immediate.

\begin{prop}\label{p:locpos}Suppose $\kappa$ is positive. Then the functor of
	global sections on the thin Grassmannian
	\[
	\Gamma_\kappa\colon  \DMOD(\Gr_G) \rightarrow \gk\mod
	\]
	is canonically $\DMOD(G_\CK)$-equivariantly equivalent to the composition
	\[
	\DMOD(\Gr_G) \xrightarrow{\on{RT}_!} \DMOD(\GR_G)_!
	\xrightarrow{\mathbf{\Gamma}_\kappa} \gk\mod.
	\]

\end{prop}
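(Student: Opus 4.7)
My approach follows the template of Proposition~\ref{p:neglevel}. Both composite functors are $\DMOD(G_\CK)$-equivariant, and $\DMOD(\Gr_G)$ is generated by $\delta_e$ under the $\DMOD(G_\CK)$-action, so it suffices to exhibit a canonical $G_\CO$-equivariant isomorphism $\mathbf{\Gamma}_\kappa(j_!) \simeq \mathbb{V}_\kappa$ of $\gk$-modules. Since $\mathbb{V}_\kappa$ need no longer be irreducible in the positive regime, the direct functions-on-the-big-cell argument used at negative level is unavailable, and one must exploit the new feature here: the existence of the adjunction $\mathbf{Loc}_\kappa \dashv \mathbf{\Gamma}_\kappa$ of \eqref{e:gamma2}.

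First I would identify $\mathbf{Loc}_\kappa(\mathbb{V}_\kappa) \simeq j_!$. By the $\DMOD(G_\CK)$-equivariance of $\mathbf{Loc}_\kappa$, both objects are characterized as the unique $\DMOD(G_\CK)$-equivariant $G_\CO$-integrable object of $\DMOD(\GR_G)_!$ attached to the trivial coset at positive level (compare~\cite{ag}). The unit of adjunction then supplies a canonical morphism
\[
\eta \colon \mathbb{V}_\kappa \longrightarrow \mathbf{\Gamma}_\kappa(\mathbf{Loc}_\kappa(\mathbb{V}_\kappa)) \simeq \mathbf{\Gamma}_\kappa(j_!),
\]
reducing the problem to checking that $\eta$ is invertible.

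To establish this invertibility, I would reduce to the already-proven negative-level case by duality. The dg-categories $\DMOD(\GR_G)_!$ at level $\kappa$ and $\DMODD(\GR_G)_*$ at the complementary level $\kappa' := -\kappa + 2\kappa_c$ are mutually dual; under this pairing $j_!$ is sent to $j_*$, and, as in~\cite{ag}, the adjunction $\mathbf{Loc}_\kappa \dashv \mathbf{\Gamma}_\kappa$ at positive $\kappa$ corresponds to the adjunction $\mathbf{Loc}_{\kappa'} \dashv \mathbf{\Gamma}_{\kappa'}$ of \eqref{e:gamma3} at negative $\kappa'$. Under this exchange, the unit $\eta$ is transported to the counit $\mathbf{Loc}_{\kappa'}(\mathbf{\Gamma}_{\kappa'}(j_*)) \to j_*$ of the negative-level adjunction. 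By Proposition~\ref{p:neglevel} applied at $\kappa'$, $\mathbf{\Gamma}_{\kappa'}(j_*) \simeq \mathbb{V}_{\kappa'}$; combined with the negative-level analogue of the identification above, namely $\mathbf{Loc}_{\kappa'}(\mathbb{V}_{\kappa'}) \simeq j_*$, this counit becomes the identity on $j_*$, hence an isomorphism.

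The principal obstacle I anticipate is making the duality bookkeeping rigorous: verifying that the pairing between the $!$- and $*$-categories at complementary levels faithfully interchanges the unit of the positive-level spherical adjunction with the counit of the negative-level one, and that all of the identifications and equivalences involved are compatible with the $G_\CO$-equivariant structures. Once this is in place, the chain of identifications collapses to the desired isomorphism $\mathbf{\Gamma}_\kappa(j_!) \simeq \mathbb{V}_\kappa$, completing the argument.
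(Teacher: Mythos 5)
Your proposal diverges from the paper's argument, which does \emph{not} reduce to the negative-level case by duality: the paper instead computes $\mathbf{\Gamma}_\kappa(j_!)$ directly by passing through the thick affine flag variety $\mathbf{Fl}_G$ and invoking Kashiwara--Tanisaki's calculation (at positive level) that the $!$-extension from the big Iwahori orbit has global sections the affine Verma of highest weight zero, together with relative averaging $\on{Av}_!^{I,G_\CO}$ to descend to $\GR_G$. So the two approaches are genuinely different.

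The more serious issue is that your argument contains a gap I do not see how to close as written. Your second step asserts $\mathbf{Loc}_\kappa(\mathbb{V}_\kappa) \simeq j_!$ with only the justification that ``both objects are characterized as the unique $\DMOD(G_\CK)$-equivariant $G_\CO$-integrable object of $\DMOD(\GR_G)_!$ attached to the trivial coset at positive level.'' This is not a characterization that applies to $\mathbf{Loc}_\kappa(\mathbb{V}_\kappa)$ without prior work: the source category $\gk\mod$ is not of the form $\DMOD(G_\CK/H)$, so $\mathbf{Loc}_\kappa$ is not pinned down by its value on one object, and $\mathbf{\Gamma}_\kappa$ at positive level computes \emph{smooth} vectors in global sections, not naive ones, so there is no elementary reason for $\mathbf{Loc}_\kappa(\mathbb{V}_\kappa)$ to land on the $!$-extension rather than (say) the $*$-extension or some intermediate extension. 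Indeed, if you grant both $\mathbf{Loc}_\kappa(\mathbb{V}_\kappa) \simeq j_!$ and that the unit is invertible on $\mathbb{V}_\kappa$, then $\mathbf{\Gamma}_\kappa(j_!) \simeq \mathbb{V}_\kappa$ follows immediately, so you have essentially assumed the content of the proposition. Note also that the negative-level argument of Proposition~\ref{p:neglevel} identifies $\mathbf{\Gamma}_\kappa(j_*)$ with functions on the big cell; at positive level, the global sections of $j_*$ are still the contragredient dual $\mathbb{V}_\kappa^\vee$ of the vacuum, which is now a \emph{different} module from $\mathbb{V}_\kappa$. Distinguishing $j_!$ from $j_*$ is precisely the crux, and it is what [KT] supplies in the paper's proof.

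The duality bookkeeping is also too quick. Dualizing the adjunction $\mathbf{Loc}_\kappa \dashv \mathbf{\Gamma}_\kappa$ interchanges which functor is the left adjoint, and the dual of the unit on $\gk\mod^{G_\CO}$ is a natural transformation $F_1^\vee F_2^\vee \Rightarrow \mathrm{Id}$ on $\gkk\mod^{G_\CO}$ \emph{in the opposite direction} from the unit of the dual adjunction; identifying it with the counit of $\mathbf{Loc}_{\kappa'} \dashv \mathbf{\Gamma}_{\kappa'}$ requires passing through the identification of compact objects with the opposite category, and one must track how $j_!$ sits against $j_*$ under the pairing $\DMOD(\GR_G)_! \otimes \DMODD(\GR_G)_* \to \on{Vect}$. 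In a naive implementation of the duality route one picks up a cohomologically graded twist (by something like $\det(\ft)[\dim G]$, coming from the mismatch between the left and right adjoints of $\Gamma_\kappa$ on the thin Grassmannian), and controlling this is where the real work lies. A duality-based argument of this kind can be made to work, but it needs a separate comparison statement for $\on{Loc}_\kappa$ versus $\Gamma_\kappa^R$ and careful normalization of the Arkhipov--Gaitsgory functors --- none of which your proposal engages with. The Kashiwara--Tanisaki route the paper takes sidesteps all of this.
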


\begin{proof} It is enough to show the composition sends $\delta_e$ to the
vacuum module $\mathbb{V}_\kappa$. By definition, we have that
\[
  \mathbf{\Gamma_\kappa} \circ \on{RT}_! (\delta_e) \simeq
  \mathbf{\Gamma_{\kappa}}( j_!).
\]
We will deduce the calculation of the latter global sections from the work of
Kashiwara--Tanisaki on localization at positive level \cite{kt}.

To do so, fix a Borel subgroup $B^-$ of $G$. Write
	$\mathbf{I^-}$ for the `thick Iwahori' group ind-scheme associated to
	$B^-$,
	i.e., the preimage
	of $B^-$ under the map \[G_{\CC[t^{-1}]} \rightarrow G\] given by
	evaluation
	at infinity. Write $\mathbf{Fl}_G \simeq G_\CK / \mathbf{I}^-$ for the
	thick affine flag variety. Consider the functor of (smooth) global sections
	\[
	\mathbf{\Gamma}_\kappa(\FL_G, -)\colon  \DMOD(\mathbf{Fl}_G) \rightarrow
	\gk\mod,
	\]
	which is denoted in {\em loc.cit.}\ by $\widetilde{\Gamma}$.

    Fix another Borel subgroup $B$ of $G$ in general position with $B^-$. Write
    $I$ for
	the associated Iwahori group scheme, i.e., the preimage of $B$ under
	the map
	$G_\CO \rightarrow G$ given by evaluation at zero.

	Let us denote by $\jmath_!$ the $!$-extension of the constant intersection
	cohomology D-module on
	the open orbit $I
	\cdot \mathbf{I^-} / \mathbf{I^-}$. On the other side of
	$\mathbf{\Gamma}_\kappa$,  let us denote the Verma module of
	highest weight zero for $\fg$ by $M_0$, and note the Verma
	module
	for $\gk$ of highest weight zero is given by $\on{pind}_{\fg}^{\gk} (M_0)$.

	Then, the desired result of Kashiwara--Tanisaki is the canonical
	equivalence
	\[
	    \mathbf{\Gamma}_\kappa(\FL_G,  \jmath_!) \simeq
	    \on{pind}_{\fg}^{\gk}(M_0),
	\]
	see \cite[Theorem 4.8.1(ii)]{kt}.\footnote{Strictly speaking,
	Kashiwara--Tanisaki discuss only the case of $\kappa$ positive rational,
	but their argument applies more generally to any positive $\kappa$.}

	We are ready to deduce the proposition. Consider the projection \[\pi\colon
	\FL_G \rightarrow \GR_G.\] As both functors denoted by
	$\mathbf{\Gamma}_\kappa$ are the smooth vectors in the naive global
	sections, and $\pi$ is a Zariski locally trivial fibration with fibre
	$G/B$, we have that
	\[
	    \mathbf{\Gamma}_{\kappa}(j_!) \simeq \mathbf{\Gamma}_\kappa(\FL_G,
	    \pi^{!*}(\hspace{.2mm}j_!)),
	\]
	where $\pi^{!*} := \pi^![- \dim G/B]$. If we write $\on{Av}_{!}^{I, G_\CO}$
	for
	the functor of relative $!$-averaging from $I$-invariants to
	$G_\CO$-invariants, note
	that \[\pi^{!*}(\hspace{.2mm}j_!) \simeq \on{Av}^{I,
	G_\CO}_!(\hspace{.2mm}\jmath_!).\] By the
	equivariance of the appearing functors, we then have
	\begin{multline*}
	 \mathbf{\Gamma}_\kappa(\FL_G,
	 \pi^{!*}(\hspace{.2mm}j_!)) \simeq \mathbf{\Gamma_{\kappa}}(\FL_G,
	 \on{Av}_!^{I,
	 G_\CO}(\hspace{.2mm}\jmath_!)) \\ \simeq \on{Av}_!^{I,
	 G_\CO} \circ \hspace{.2mm}\mathbf{\Gamma_{\kappa}}(\FL_G,
 \jmath_!) \simeq \on{Av}_!^{I,
 	G_\CO} \circ \on{pind}_{\fg}^{\gk}(M_0) \\ \simeq \on{pind}_\fg^{\gk} \circ
 	\on{Av}_!^{B, G} (M_0)  \simeq \on{pind}_\fg^{\gk}(\mathbb{C})  \simeq
 	\mathbb{V}_\kappa,
	\end{multline*}
as desired. \end{proof}

\begin{cor} Suppose $\kappa$ is positive. Then the functor of localization on
the thin Grassmannian
	\[
	\on{Loc}_\kappa\colon  \gk\mod^{G_\CO} \rightarrow \DMOD(\Gr_G)^{G_\CO}
	\]
	is canonically $\DMOD(G_\CO \backslash G_\CK / G_\CO)$-equivariantly
	equivalent to the composition
	\[
	\gk\mod^{G_\CO} \xrightarrow{\mathbf{Loc}_\kappa} \DMOD(\GR_G)^{G_\CO}
	\xrightarrow{\RT_!^{-1}}
	\DMOD(\Gr_G)^{G_\CO}.
	\]
\end{cor}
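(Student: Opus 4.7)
The plan is to simply mimic the derivation of Corollary~\ref{c:locthin} from Proposition~\ref{p:neglevel}, replacing $\on{RT}_*$ by $\on{RT}_!$ and using the adjunctions appropriate to positive level.

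First, I would take the statement of Proposition~\ref{p:locpos} and restrict both functors to $G_\CO$-equivariant objects, yielding a $\DMOD(G_\CO\backslash G_\CK/G_\CO)$-equivariant equivalence
\[
\Gamma_\kappa \simeq \mathbf{\Gamma}_\kappa \circ \on{RT}_!\colon \DMOD(\Gr_G)^{G_\CO} \longrightarrow \gk\mod^{G_\CO}.
\]
Since $\kappa$ is positive, the localization--global-sections adjunction on the thick side~\eqref{e:gamma2} supplies a left adjoint $\mathbf{Loc}_\kappa$ to $\mathbf{\Gamma}_\kappa$; on the thin side, the left adjoint of $\Gamma_\kappa$ is by definition $\on{Loc}_\kappa$; and on spherical vectors $\on{RT}_!$ is an equivalence (recalled in~\S\ref{A5}), whose left adjoint is therefore canonically identified with its inverse $\on{RT}_!^{-1}$.

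Next, I would pass to left adjoints in the equivalence above, which reverses the order of composition and yields the canonical identification
\[
\on{Loc}_\kappa \simeq \on{RT}_!^{-1} \circ \mathbf{Loc}_\kappa\colon \gk\mod^{G_\CO} \longrightarrow \DMOD(\Gr_G)^{G_\CO},
\]
as desired. The $\DMOD(G_\CO\backslash G_\CK/G_\CO)$-equivariance is automatic because each of $\Gamma_\kappa$, $\mathbf{\Gamma}_\kappa$, and $\on{RT}_!$ is $\DMOD(G_\CK)$-equivariant, so all three right adjoints (and hence their left adjoints) are $\DMOD(G_\CO\backslash G_\CK/G_\CO)$-equivariant on spherical invariants.

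There is no genuine obstacle beyond bookkeeping: the substantive input is Proposition~\ref{p:locpos}, whose proof via Kashiwara--Tanisaki and the pullback $\pi^{!*}$ from $\FL_G$ supplies the positive-level calculation $\mathbf{\Gamma}_\kappa(j_!)\simeq \BV_\kappa$; once this is in hand, the corollary is a formal consequence of passing to spherical invariants and taking left adjoints, with the only point requiring care being that, at positive level, it is the $!$-version of the Radon transform (and the $!$-version of localization on the thick Grassmannian) that carries the adjunction, which is precisely why $\RT_!$ rather than $\RT_*$ appears here.
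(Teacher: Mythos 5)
Your proof is correct and matches the paper's implicit derivation exactly: the paper signals before Corollary~\ref{c:locthin} that the negative-level version follows from Proposition~\ref{p:neglevel} "by passing to spherical invariants, and then left adjoints," and the present corollary is obtained from Proposition~\ref{p:locpos} by precisely the same two-step move, with the positive-level adjunction~\eqref{e:gamma2} supplying $\mathbf{Loc}_\kappa$ as the left adjoint of $\mathbf{\Gamma}_\kappa$ and the invertibility of $\on{RT}_!$ on spherical vectors identifying its left adjoint with $\on{RT}_!^{-1}$. Your identification that the $!$-versions of the Radon transform and thick-Grassmannian localization are the ones carrying the adjunction at positive level is exactly the right bookkeeping point.
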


\begin{rem} Analogs of the results of this appendix hold, {\em mutatis
mutandis}, after replacing the thick and thin Grassmannians by any opposite
thick and
thin partial affine flag varieties, by similar arguments, as well as for
monodromic D-modules on the enhanced thick and thin affine flag varieties.
Similarly, one may replace $G_\CK$ by a quasi-split form.

With some care about hypotheses on twists, similar results hold for a
symmetrizable Kac--Moody group, again by similar arguments. We leave the
details to the interested reader.
\end{rem}

\section{Topological vs.\ algebraic anomaly cancellation condition}
\label{t vs a}
\centerline{By Theo Johnson-Freyd}

\bigskip

The goal of this appendix is to prove~Proposition~\ref{theo}.

\subsection{Simply connected case}
Let $\fG$ be a connected complex reductive group with classifying space~$\B \fG$, and let $\varrho \colon  \fG \to \Sp(2n,\BC)$ a symplectic representation of $\fG$. Recall that $H^4(\B\Sp(2n,\BC), \BZ) \cong \BZ$ is generated by the \emph{universal (quaternionic first) Pontryagin class} $q_1$. Thus $\varrho$ has a \emph{(quaternionic first) Pontryagin class} $q_1(\varrho) = \varrho^*(q_1) \in H^4(\B \fG, \BZ)$, equal (up to a sign convention) to the second Chern class of the underlying complex representation $\varrho \colon  \fG \to \Sp(2n,\BC) \to \mathrm{SL}(2n,\BC)$. Recall furthermore that $\pi_4 \Sp(2n,\BC) \cong \pi_5 \B\Sp(2n,\BC) \cong \BZ/2\BZ$.
\begin{thm}\label{thm-tjf}
  If $q_1(\varrho)$ is even, i.e.\ divisible by $2$ in $H^4(\B \fG, \BZ)$, then $\varrho$ induces the zero map $\pi_5 \varrho \colon  \pi_5 \B \fG \to \pi_5 \B\Sp(2n,\BC)$. If $\fG$ is simply connected, then the converse holds: if $\pi_5 \varrho=0$, then $q_1(\varrho)$ is even.
\end{thm}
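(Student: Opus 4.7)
The plan is to exploit naturality of the stable Hopf operation. Precomposition with the Hopf map $\eta\colon S^5\to S^4$ gives a natural homomorphism $\eta^\ast\colon \pi_4(X)\to\pi_5(X)$ for every pointed space $X$ (linearity is because $\eta$ is a suspension, hence a co-$H$-map). At $X=\B\Sp(2n,\BC)$ this operation is identified with reduction modulo~$2$: the generator of $\pi_4(\B\Sp(2n,\BC))\cong\BZ$ is represented by $S^4\subset\B\Sp(1)\subset\B\Sp(2n,\BC)$, and the generator of $\pi_5(\B\Sp(2n,\BC))\cong\BZ/2$ is obtained by composing with $\eta$, coming from the nontrivial element of $\pi_4(S^3)=\pi_4(\Sp(1))$.

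For the forward implication, suppose $q_1(\varrho)=2\beta$ in $H^4(\B\sG,\BZ)$. Since $\B\Sp(2n,\BC)$ is $3$-connected, Hurewicz and universal coefficients identify $q_1$ with the generator of $\Hom(\pi_4(\B\Sp(2n,\BC)),\BZ)$, so the integer $\pi_4(\B\varrho)(x)$ equals the evaluation $\langle q_1(\varrho),x\rangle$ for $x\in\pi_4(\B\sG)$, and this is even by hypothesis. The naturality square
\[
\begin{CD}
\pi_4(\B\sG) @>{\pi_4(\B\varrho)}>> \BZ \\
@V{\eta^\ast}VV @VV{\bmod 2}V \\
\pi_5(\B\sG) @>{\pi_5(\B\varrho)}>> \BZ/2
\end{CD}
\]
then forces $\pi_5(\B\varrho)$ to vanish on the image of $\eta^\ast\colon\pi_4(\B\sG)\to\pi_5(\B\sG)$. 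I then argue surjectivity of this map, equivalently of $\eta^\ast\colon\pi_3(\sG)\to\pi_4(\sG)$. Since $\pi_3$ and $\pi_4$ are isogeny invariants, one reduces to a simply connected semisimple $\sG$ and further to each simple factor: for simple types other than $A_1$ and $C_n$ (with $B_2\cong C_2$) one has $\pi_4=0$ and the claim is vacuous, while for $A_1$ and $C_n$ the group $\pi_4=\BZ/2$ is generated by the $\eta$-image of the standard generator of $\pi_3$, visible from the embedding $\Sp(1)\into\sG$. Surjectivity yields $\pi_5\varrho=0$.

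For the converse with $\sG$ simply connected, run the same square backwards. Vanishing of $\pi_5(\B\varrho)$ forces the composite $\pi_4(\B\sG)\to\BZ\to\BZ/2$ to be zero, so $\pi_4(\B\varrho)$ takes only even values. Simple connectivity of $\sG$ makes $\B\sG$ a $3$-connected space, so Hurewicz together with universal coefficients yields an isomorphism $H^4(\B\sG,\BZ)\cong\Hom(\pi_4(\B\sG),\BZ)$, under which divisibility of a class in $H^4$ by~$2$ matches the property that the corresponding homomorphism takes only even values. Hence $q_1(\varrho)$ is divisible by~$2$, as required.

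The main technical obstacle is the surjectivity $\eta^\ast\colon\pi_3(\sG)\to\pi_4(\sG)$, which rests on Bott's tabulation of low-degree homotopy of simply connected simple Lie groups; the substance reduces to the identification of the generator of $\pi_4(S^3)$ with $\eta$ via the subgroup $\Sp(1)\into\sG$. Simple connectivity is essential for the converse: when $\pi_1(\sG)\neq 0$ (for instance, if $\sG$ has a $\rU(1)$-factor), $H^4(\B\sG,\BZ)$ contains decomposable classes such as squares of first Chern classes that pair trivially with $\pi_4(\B\sG)$, so evenness on $\pi_4$ is strictly weaker than divisibility by~$2$ in $H^4$.
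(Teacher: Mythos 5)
Your proof is correct, and it takes a genuinely different route from the paper's. The paper works with the Postnikov tower of $\B\Sp$: it identifies the $k$-invariant of $\tau_{\leq 5}\B\Sp$ as $\Sq^2\circ(\mathrm{mod}\,2)$, then controls the ambiguity of lifts along $\tau_{\leq 5}\to\tau_{\leq 4}$ via the vanishing of $H_5\B\sG^{\sico}$ (proved through the Schubert cell decomposition of the flag variety), and for the converse has to pass through a fibre-vs-cofibre argument for infinite loop spaces plus injectivity of $\Sq^2$ on $H^4(K(A,4),\BZ/2)$. You instead exploit the naturality of the Hopf operation $\eta^\ast\colon\pi_4\to\pi_5$, which is the homotopy-level shadow of the same $\Sq^2$. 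In the forward direction, the paper's Proposition on $H_5\B\sG^{\sico}=0$ is replaced in your argument by the surjectivity of $\eta^\ast\colon\pi_3(\sG)\to\pi_4(\sG)$, which you settle by a finite case check on simple factors (only types $A_1$ and $C_n$, with $B_2\cong C_2$, have $\pi_4\neq 0$, and there the generator of $\pi_4=\BZ/2$ is visibly $\eta$ pushed forward from $\Sp(1)$). For the converse, your argument is actually leaner than the paper's: 3-connectivity of $\B\sG$ plus freeness of $\pi_4(\B\sG)=\pi_3(\sG)$ (here is where simple connectivity of $\sG$ enters) immediately converts ``$\pi_4(\B\varrho)$ even-valued'' into ``$q_1(\varrho)$ divisible by 2'' via Hurewicz and universal coefficients, with no appeal to Eilenberg--MacLane cohomology or to the infinite-loop-space technicalities. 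The trade-off is that your approach requires the classification of $\pi_4$ for simple Lie groups, whereas the paper's input is the purely geometric fact that flag varieties have homology concentrated in even degrees; each argument buys something the other does not.
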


Theorem~\ref{thm-tjf} obviously depends only on the homotopy 5-type $\tau_{\leq 5} \B\Sp(2n,\BC)$ of $\B\Sp(2n,\BC)$. This homotopy 5-type is independent of $n$, and so we will henceforth call it simply $\tau_{\leq 5} \B\Sp$. We will prove Theorem~\ref{thm-tjf} for any map $\varrho \colon  \B \fG \to \tau_{\leq 5} \B\Sp$.

\begin{rem}
  To see that simple connectivity is a necessary condition, consider
  $\varrho\colon\fG=\BC^\times\hookrightarrow\Sp(2,\BC)$ a Cartan
  torus of $\Sp(2,\BC)$. Then $q_1(\varrho)$ is a generator of $H^4(\B\fG,\BZ)$.
\end{rem}

\begin{prop}\label{prop-tjf}
  If $\fG$ is connected and simply connected, then $H_5 \B \fG$ is trivial.
\end{prop}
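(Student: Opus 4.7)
My approach would be to reduce to the simple case and then analyze the Serre spectral sequence of the universal bundle. Up to homotopy, $\sG$ is its maximal compact subgroup $K$, which under our hypotheses is a product $K_1\times\cdots\times K_m$ of simply connected simple compact Lie groups. Each $\B K_j$ is $3$-connected with $H_4(\B K_j;\BZ)\cong\pi_3(K_j)\cong\BZ$ free, so the K\"unneth theorem has no Tor contributions in total degree $5$ and gives $H_5(\B K;\BZ)\cong\bigoplus_j H_5(\B K_j;\BZ)$. Hence it suffices to treat a simply connected simple compact Lie group $K$.

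For such $K$, the key input will be the vanishing $H_4(K;\BZ)=0$. Since $H^*(K;\BQ)$ is an exterior algebra on odd-degree generators (Hopf), $H^4(K;\BQ)=0$, so $H_4(K;\BZ)$ is torsion. Its torsion is detected by $\mathrm{Ext}(H_4(K;\BZ),\BZ)\subseteq H^5(K;\BZ)$, and for each simply connected simple compact type the integral cohomology $H^*(K;\BZ)$ is torsion-free in degrees $\le 5$: for $A_n$ and $C_n$ the full integral cohomology is exterior on odd generators, while for $B_n$, $D_n$, $G_2$, $F_4$, $E_6$, $E_7$, $E_8$ the first torsion classes appear in degree $\ge 6$ (a standard case check, which can also be extracted from a short Bockstein argument).

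With this input in hand, I would run the homology Serre spectral sequence of the universal bundle $K\to EK\to\B K$,
\[E^2_{p,q}=H_p(\B K;H_q(K;\BZ))\Longrightarrow H_{p+q}(EK;\BZ),\]
which abuts to zero in positive total degrees. On the antidiagonal $p+q=5$, only $E^2_{5,0}=H_5(\B K;\BZ)$ and $E^2_{0,5}=H_5(K;\BZ)$ can be nonzero, since the intermediate slots involve $H_q(K;\BZ)$ with $q\in\{1,2,4\}$, all of which vanish. The outgoing differentials from $E^r_{5,0}$ land in $E^r_{5-r,r-1}$ for $r=2,3,4,5$, and each of these targets is a subquotient of $H_{5-r}(\B K;\BZ)\otimes H_{r-1}(K;\BZ)=0$, again by the vanishing of $H_1,H_2,H_4$ of $K$. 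There are no incoming differentials (they would require $q$-index $<0$). Thus $E^\infty_{5,0}=H_5(\B K;\BZ)$, and convergence to zero yields the claim.

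The main obstacle is the type-by-type verification of $H_4(K;\BZ)=0$ for the exceptional and spinorial types; everything else is formal from the spectral sequence.
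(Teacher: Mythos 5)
Your argument is correct, but it takes a different route from the paper and arguably defers the difficulty rather than resolving it. You reduce to a simply connected simple compact factor $K$, show $H_5 \B K$ is torsion, and then run the universal-bundle Serre spectral sequence for $K\to EK\to\B K$. That spectral sequence argument is clean, but it requires the input $H_4(K;\BZ)=0$, which is where the real content sits. Notice that this input is essentially \emph{equivalent} to the proposition you are trying to prove: running the same universal-bundle spectral sequence in the other direction, the vanishing of $H_5\B K$ forces $E^\infty_{0,4}=H_4(K;\BZ)=0$, so your key lemma and the proposition imply each other. To break the symmetry you fall back on the type-by-type verification that $H^*(K;\BZ)$ is torsion-free in degrees $\le 5$ --- which is classical (Bott, Borel), but a case check touching the spinorial and exceptional families nonetheless.

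The paper avoids this by introducing genuinely new geometric input: the flag variety $X=\sG/B$. After observing (via universal coefficients and the evenness of rational cohomology) that $H_5\B\sG$ is torsion, the paper runs the homological Serre spectral sequence of the bundle $X\to \B B\to\B \sG$. The point is that \emph{both} the fiber $X$ (via its Schubert cell decomposition) and the total space $\B B\simeq\B T$ have homology that is free abelian and concentrated in even degrees --- facts that hold uniformly, with no case analysis. The resulting exact sequence $0\to H_5\B\sG\to H_4 X\to H_4\B B\to H_4\B\sG\to 0$ embeds the torsion group $H_5\B\sG$ into the free group $H_4 X$, and the conclusion is immediate. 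So both proofs are valid, but the paper's is self-contained and case-free, whereas yours shifts the burden onto a classical but nontrivial computation of low-degree torsion in simple compact Lie groups.
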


\begin{proof}

  Recall that $\pi_2 \fG = \pi_3 \B \fG$ vanishes and $\pi_4 \B \fG = H_4 \B \fG$ is a free abelian group.%
  \footnote{Indeed, $\pi_3 \B \fG$ vanishes for every Lie group, with no conditions, and $\pi_4 \B \fG$ is always free abelian. The Hurewicz map $\pi_4 \B \fG \to H_4 \B \fG$ is an isomorphism if $\fG$ is simply connected, in which case
  $H_4 \B \fG$ has rank equal to the number of simple factors of $\fG$.}
  Recall furthermore that $H^\bullet(\B \fG,\BQ)$ is concentrated in even degrees.%
  \footnote{$H^\bullet(\B \fG,\BQ)$ is a polynomial algebra on generators of degrees twice the exponents of $\fG$.}
  From the universal coefficient theorem, we find that $H_5 \B \fG$ is torsion.

   Choose a Borel subgroup $B \subset \fG$, and consider the flag variety $X = \fG/B$.
  The homology of $X$ is very well understood. Indeed, $X$ has a {\em Schubert decomposition} into cells of even real dimension.
  In particular, the homology of the manifold $X$ is free abelian and concentrated in even degrees.

  Consider the homological Serre spectral sequence for the fibre bundle $X \to \B B \to \B \fG$:
  \[E^2_{ij} := H_i(\B \fG, H_j X) \Rightarrow H_{i+j} \B B.\]
  The $E^2$ page vanishes whenenever $j$ is odd and also when $1\leq i \leq 3$. Since
  $B$ is homotopy equivalent to a torus, $H_\bullet \B B$ is free abelian and concentrated in even
  degrees, and hence the $E^\infty$ page vanishes when $i+j$ is odd. It follows that there is an exact
  sequence
  \[0 \to H_5 \B \fG \to H_4 X \to H_4 \B B \to H_4 \B \fG \to 0.\]
  But $H_5 \B \fG$ is torsion, whereas $H_4 X$ is free abelian.%
\end{proof}

\begin{cor}\label{cor-tjf}
  Let $\fG$ be a connected complex reductive Lie group, not necessarily simply connected,
  and let $Y$ be any topological space. Suppose given a map $\B \fG \to \tau_{\leq 4} Y$ which admits a lift to $Y$. Then any two lifts $\B \fG \to Y$ induce the same map $\pi_5 \B \fG \to \pi_5 Y$.
\end{cor}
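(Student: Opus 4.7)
The plan is to reduce, via obstruction theory, to showing that the Hurewicz map $h\colon\pi_5\B\sG\to H_5\B\sG$ vanishes, and then to deduce this from \propref{prop-tjf} applied to the topological universal cover of $\sG$.

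Since $\sG$ is connected, $\B\sG$ is simply connected, so no local coefficient subtleties arise. Let $f_1,f_2\colon\B\sG\to Y$ be two lifts of the given map $\B\sG\to\tau_{\leq 4}Y$. Postcomposing with the Postnikov projection $Y\to\tau_{\leq 5}Y$ yields two lifts into $\tau_{\leq 5}Y$, which differ by a class $\alpha\in H^5(\B\sG,\pi_5Y)$, since $\tau_{\leq 5}Y\to\tau_{\leq 4}Y$ is a principal $K(\pi_5Y,5)$-fibration. Viewing $\alpha$ as a map $\B\sG\to K(\pi_5Y,5)$, the standard computation of $\pi_5$ of a map to an Eilenberg--MacLane space shows that the difference $\pi_5f_1-\pi_5f_2\colon\pi_5\B\sG\to\pi_5Y$ factors as $\pi_5\B\sG\xrightarrow{h}H_5\B\sG\xrightarrow{\bar\alpha}\pi_5Y$, where $\bar\alpha$ is the image of $\alpha$ under the universal coefficient surjection $H^5(\B\sG,\pi_5Y)\twoheadrightarrow\Hom(H_5\B\sG,\pi_5Y)$. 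Hence it suffices to prove $h=0$.

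To prove $h=0$, let $p\colon\tilde\sG\to\sG$ be the topological universal cover. Because $\sG$ is connected complex reductive with Lie algebra $\fg=\fg_{ss}\oplus\fz$ (where $\fz$ is the center), $\tilde\sG$ splits as a Lie group as $\tilde\sG\cong\tilde\sG_{ss}\times\fz$, in which $\tilde\sG_{ss}$ is the simply connected complex semisimple group with Lie algebra $\fg_{ss}$ and $\fz\cong\BC^r$ is regarded as an additive Lie group. Since $\fz$ is contractible, $\B\fz$ is contractible, and so $\B\tilde\sG\simeq\B\tilde\sG_{ss}$. By \propref{prop-tjf}, $H_5\B\tilde\sG_{ss}=0$, whence $H_5\B\tilde\sG=0$. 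On the other hand $p$ is a covering of Lie groups, hence an isomorphism on $\pi_n$ for $n\geq 2$, so $\B p$ is an isomorphism on $\pi_n$ for $n\geq 3$, in particular on $\pi_5$. Naturality of Hurewicz then yields a commutative square whose top row is $\pi_5\B\tilde\sG\to H_5\B\tilde\sG=0$ and whose left column is an isomorphism, forcing $h=0$.

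The main subtlety is the identification of the universal cover in the previous paragraph: a connected complex reductive $\sG$ may have infinite fundamental group coming from the center (e.g.\ $\sG=\BC^\times$), so $\tilde\sG$ need not be algebraic or reductive. However, as a Lie group it still splits off the simply connected semisimple cover times a contractible vector group, and after passing to classifying spaces the vector group disappears, reducing us precisely to the hypothesis of \propref{prop-tjf}.
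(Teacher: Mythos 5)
Your proposal is correct and follows essentially the same route as the paper: reduce the difference of two lifts to a class in $H^5(\B\sG,\pi_5 Y)$, observe that its effect on $\pi_5$ factors through the Hurewicz map $\pi_5\B\sG\to H_5\B\sG$, and then kill this map by pulling it back to a simply connected cover whose $H_5\B$ vanishes by Proposition~\ref{prop-tjf}. One thing you do that is worth keeping: you spell out explicitly that the topological universal cover of a non-simply-connected reductive $\sG$ is $\tilde\sG_{\mathrm{ss}}\times\fz$ with $\fz$ a contractible vector group, so that $\B\tilde\sG\simeq\B\tilde\sG_{\mathrm{ss}}$ and Proposition~\ref{prop-tjf} applies to the semisimple factor. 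The paper's proof elides this by simply writing ``$\sG^\sico$'' and invoking Proposition~\ref{prop-tjf}, leaving the reader to check that the universal cover (which is not algebraic, and not reductive in the usual sense, when $\pi_1\sG$ is infinite) still fits into that proposition's setup. Your care on that point is a small improvement in exposition, not a different argument.
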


\begin{proof}
  The lifts of a map $\B \fG \to \tau_{\leq 4} Y$ along $\tau_{\leq 5} Y \to \tau_{\leq 4} Y$, assuming there are any, form a torsor for $H^5(\B \fG, \pi_5 Y)$. Suppose two lifts differ by some class in $H^5(\B \fG, \pi_5 Y)$. Then their actions on $\pi_5 \B \fG$ differ by the image of that class along the Hurewicz map $H^5(\B \fG, \pi_5 Y) \to \Hom(\pi_5 \B \fG , \pi_5 Y)$ induced from $\pi_5 \B \fG \to H_5 \B \fG$.

 Let $\fG^{\sico}$ denote the simply connected cover of $\fG$. Then $\pi_5 \B \fG^\sico \to \pi_5 \B \fG$ is an isomorphism, and so the Hurewicz map $\pi_5 \B \fG \to H_5 \B \fG$ factors through $H_5 \B \fG^{\sico} \to H_5 \B \fG$. But $H_5 \B \fG^{\sico} = 0$ by Proposition~\ref{prop-tjf}.
\end{proof}

To complete the proof of Theorem~\ref{thm-tjf}, we will need to know the space $\tau_{\leq 5}\B\Sp$. It has precisely two nontrivial homotopy groups: $\pi_4 = \BZ$ and $\pi_5 = \BZ/2\BZ$. Thus we will know it completely if we know its Postnikov k-invariant. Recall that the Postnikov k-invariant of the extension $K(\BZ/2\BZ,5) \to \tau_{\leq 5}\B\Sp \to K(\BZ,4)$ is some universal cohomology operation $f \colon  H^4(-,\BZ) \to H^6(-,\BZ/2\BZ)$. A map $X \to K(\BZ,4)$ is, up to homotopy, a class $\alpha \in H^4(X,\BZ)$, and it lifts along $\tau_{\leq 5}\B\Sp \to K(\BZ,4)$ if and only if $f(\alpha) = 0 \in H^6(X,\BZ/2\BZ)$.

\begin{lem}
  The Postnikov $k$-invariant of the $\tau_{\leq 5} \B\Sp$ is $\Sq^2 \circ (\mathrm{mod}\,2) \colon  H^4(-,\BZ) \to H^6(-,\BZ/2\BZ)$, where $(\mathrm{mod}\,2) \colon  H^4(-,\BZ) \to H^4(-,\BZ/2\BZ)$ is the corresponding map on coefficients, and $\Sq^2$ is the second Steenrod square.
\end{lem}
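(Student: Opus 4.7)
The plan is to identify the $k$-invariant by first pinning down the cohomology group in which it lives---reducing to two possibilities---and then ruling out the trivial one by an elementary obstruction argument.

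First, I would invoke the standard Serre--Cartan computation: $H^*(K(\BZ,4),\BZ/2\BZ)$ is the polynomial algebra on admissible Steenrod monomials $\Sq^I(\iota_4\bmod 2)$ of excess $<4$ whose last factor is not $\Sq^1$. In degree~$6$ the only such monomial is $\Sq^2(\iota_4\bmod 2)$, so $H^6(K(\BZ,4),\BZ/2\BZ)\cong\BZ/2\BZ$. Hence the Postnikov $k$-invariant $f$ must be either $0$ or $\Sq^2\circ(\mathrm{mod}\,2)$, and the problem reduces to deciding between these two options.

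Next I would rule out $f=0$ by contradiction. If $f$ vanished, then $\tau_{\leq 5}\B\Sp$ would split as $K(\BZ,4)\times K(\BZ/2\BZ,5)$; composing the canonical map $\B\Sp(2n,\BC) \to \tau_{\leq 5}\B\Sp$ (for any $n\geq 1$) with the projection onto the $K(\BZ/2\BZ,5)$ factor would then produce a class $c \in H^5(\B\Sp(2n,\BC),\BZ/2\BZ)$ inducing an isomorphism on $\pi_5$, in particular nonzero. But $\B\Sp(2n,\BC)$ is homotopy equivalent to the classifying space of its maximal compact subgroup, and by Borel, $H^*(\B\Sp(2n,\BC),\BZ/2\BZ)$ is a polynomial algebra on symplectic Pontryagin classes $q_1,\ldots,q_n$ with $|q_i|=4i$; so $H^5=0$, a contradiction. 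Therefore $f=\Sq^2\circ(\mathrm{mod}\,2)$.

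The only real subtlety I foresee is verifying that $\B\Sp(2n,\BC)\to\tau_{\leq 5}\B\Sp$ really is an isomorphism on $\pi_5\cong\BZ/2\BZ$, so that the putative retraction really would detect a nontrivial class. Taking $n=1$ this reduces to $\pi_5\B\Sp(2,\BC) = \pi_4\Sp(2,\BC)= \pi_4 S^3 = \BZ/2\BZ$ (using $\Sp(2,\BC)\simeq SU(2)\cong S^3$), and the stable range for $\pi_4$ of the symplectic groups is already reached at $n=1$, so the claim extends to all $n$. No other serious obstacle is anticipated.
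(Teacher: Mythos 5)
Your proof is correct, and it takes a genuinely different route from the paper's. The paper invokes Bott periodicity to identify $\tau_{\leq 5}\B\Sp$ with the $4$-fold delooping of $\tau_{\leq 1} ko$ (at the level of infinite loop spaces) and then cites the classical fact that the first $k$-invariant of $ko$, connecting $\pi_0 = \BZ$ to $\pi_1 = \BZ/2\BZ$, is $\Sq^2\circ(\mathrm{mod}\,2)$. You instead give a direct obstruction-theoretic argument: the Serre--Cartan description of $H^*(K(\BZ,4),\BZ/2\BZ)$ pins down $H^6(K(\BZ,4),\BZ/2\BZ)\cong\BZ/2\BZ$, so the $k$-invariant is either $0$ or $\Sq^2\circ(\mathrm{mod}\,2)$; and a trivial $k$-invariant would split off a $K(\BZ/2\BZ,5)$ factor, which by composing with the Postnikov truncation $\B\Sp(2n,\BC)\to\tau_{\leq 5}\B\Sp$ (an isomorphism on $\pi_5$, as you verify via $\Sp(2,\BC)\simeq S^3$ and stability) would yield a nonzero class in $H^5(\B\Sp(2n,\BC),\BZ/2\BZ)=0$---a contradiction. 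Your approach is more elementary and essentially self-contained, requiring only classical mod-$2$ cohomology of Eilenberg--MacLane spaces and of $\B\Sp$, while the paper's is shorter but leans on a known $ko$-theoretic fact. (In effect, your argument is close to a proof of that $ko$ fact, shifted into the symplectic range.) Both are sound.
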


\begin{proof}
  Bott periodicity identifies $\tau_{\leq 5} \B\Sp$ with the 4-fold suspension of the infinite loop space $\tau_{\leq 1} ko$.
   Thus the statement in the Lemma follows from (and is equivalent to) the fact that the k-invariant (at the level of infinite loop spaces) connecting $\pi_0 ko = \BZ$ to $\pi_1 ko = \BZ/2\BZ$ is $\Sq^2 \circ (\mathrm{mod}\,2)$.
\end{proof}

\begin{proof}[Proof of Theorem~\ref{thm-tjf}]
  Fix $\varrho \colon  \B \fG \to \tau_{\leq 5}\B\Sp$. The class $q_1(\varrho) \in H^4(\B \fG, \BZ)$ is nothing but the image of $\varrho$ along $\tau_{\leq 5}\B\Sp \to \tau_{\leq 4}\B\Sp = K(\BZ, 4)$, and note that $q_1(\varrho)$ factors through $\tau_{\leq 4} \B \fG$.

  Suppose that $q_1(\varrho)$ is even. Then $\Sq^2(q_1(\varrho) \, \mathrm{mod}\, 2) = 0$, and so $q_1(\varrho) \colon  \tau_{\leq 4} \B \fG \to \tau_{\leq 4} \B \Sp$ lifts to a map $\tau_{\leq 4} \B \fG \to \tau_{\leq 5} \B \Sp$. The composition $\B \fG \to \tau_{\leq 4} \B \fG \to \tau_{\leq 5} \B \Sp$ vanishes on $\pi_5 \B \fG$. This composition might not be equal to $\varrho$, but it and $\varrho$ are both lifts of the same map $\B \fG \to \tau_{\leq 4}\B \Sp$.
  And so by~Corollary~\ref{cor-tjf} they have the same (trivial) value on  $\pi_5 \B \fG$.

  Now suppose that $\fG$ is connected and simply connected.
  Then $\tau_{\leq 4}\B \fG \cong K(A,4)$ where $A$ is a free abelian group, and
  $H^4(\B \fG, \BZ/2\BZ) = H^4(K(A,4), \BZ/2\BZ) = \Hom(A,\BZ/2\BZ)$. We claim that
  $\Sq^2 \colon  H^4(K(A,4), \BZ/2\BZ) \to H^6(K(A,4), \BZ/2\BZ)$ is injective. Indeed, suppose that
  $\alpha \neq 0 \in  \Hom(A,\BZ/2\BZ)$, and let $a \colon  \BZ \to A$ be an element such that $\alpha(a) \neq 0$. By restricting along the corresponding map $K(\BZ,4) \to K(A,4)$, if suffices to prove the claim when $A = \BZ$ and $\alpha$ is the map that reduces mod 2. There is a nonzero map $\beta\colon  K(\BZ/2\BZ,3) \to K(\BZ,4)$, and the composition $K(\BZ/2\BZ,3) \overset\beta\to K(\BZ,4) \overset\alpha\to K(\BZ/2\BZ,4)$ is the class $\Sq^1 z \in H^4(K(\BZ/2\BZ,3), \BZ/2\BZ)$, where $z \in H^3(K(\BZ/2\BZ,3), \BZ/2\BZ)$ generates $H^3(K(\BZ/2\BZ,3), \BZ/2\BZ)$ over the Steenrod algebra. Then $\Sq^2 ( \alpha) ( \beta) = \Sq^2 \Sq^1 z \neq 0 \in H^6(K(\BZ/2\BZ,3), \BZ/2\BZ)$. It follows that $\Sq^2(\alpha) \neq 0$, proving the claim that $\Sq^2 \colon  H^4(K(A,4), \BZ/2\BZ) \to H^6(K(A,4), \BZ/2\BZ)$ is injective.

  Suppose that $\pi_5 \varrho = 0$. Then the map $\tau_{\leq 5}\varrho \colon  \tau_{\leq 5} \B \fG \to \tau_{\leq 5} \B\Sp$ factors through the cofibre of the inclusion $K(\pi_5 \B \fG,5) \to \tau_{\leq 5} \B \fG$. Note that this inclusion is the fibre of the map $\tau_{\leq 5} \B \fG \to \tau_{\leq 4} \B \fG$.
  In general, given a fibre bundle of spaces $F \to E \to B$, there is a canonical map
  $\operatorname{cofibre}(F \to E) \to B$, but it is not always an equivalence. However, assuming
  $\fG$ is connected and simply connected, then $\tau_{\leq 5}\varrho \colon  \tau_{\leq 5} \B \fG \to \tau_{\leq 5} \B\Sp$ is canonically a map of infinite loop spaces,\footnote{In general, a space all of whose homotopy groups are in degrees $(n, 2n]$ for some $n$ is automatically an infinite loop space.} and for infinite loop spaces, a fibre and cofibre sequences agree.
  In particular, if $\pi_5 \varrho = 0$ and  $\fG$ is connected and simply connected, then $\tau_{\leq 5}\varrho \colon  \tau_{\leq 5} \B \fG \to \tau_{\leq 5} \B\Sp$ factors through $\tau_{\leq 4} \B \fG$.

  But this means that $q_1(\varrho) \colon  \tau_{\leq 4} \B \fG \to \tau_{\leq 4} \B\Sp$ does lift along $\tau_{\leq 5} \B\Sp \to \tau_{\leq 5} \B\Sp$, and so $\Sq^2(q_1(\varrho)\,\mathrm{mod}\,2) = 0 \in H^6(\tau_{\leq 4}\B \fG, \BZ/2\BZ)$. On the other hand, since $\fG$ is connected and simply connected, $\Sq^2\colon  H^4(\tau_{\leq 4}\B \fG, \BZ/2\BZ) \to H^6(\tau_{\leq 4}\B \fG, \BZ/2\BZ)$ is injective. Thus $q_1(\varrho)\,\mathrm{mod}\,2 = 0 \in H^4(\tau_{\leq 4}\B \fG, \BZ/2\BZ) = H^4(\B \fG, \BZ/2\BZ)$, or in other words $q_1(\varrho)$ is even.
  \end{proof}

\subsection{General case (proof of Proposition~\ref{theo})}
We choose a Cartan torus $\fT\subset\fG$. The Weyl group of $(\fG,\fT)$ is denoted $W$.
If $\fG$ is simply connected, then the coweight lattice $X_*(\fT)$
coincides with the coroot lattice $Q$. The cohomology group $H^4(\B\fG,\BZ)$ is isomorphic to
the group $\on{Quad}(X_*(\fT))^W$ of $W$-invariant even-valued quadratic forms on $X_*(\fT)=Q$,
see~\S\ref{ano can}. It is in turn isomorphic to the group of $W$-invariant integer-valued
bilinear forms on $X_*(\fT)=Q$ such that $B(\lambda,\lambda)\in2\BZ$ for any $\lambda\in Q$.
Namely, $q\in\on{Quad}(X_*(\fT))^W$ goes to $B(\lambda,\mu):=\frac12(q(\lambda+\mu)-q(\lambda)-q(\mu))$.
Let $\on{Tr}\colon\fsp(2n,\BC)\times\fsp(2n,\BC)\to\BC$ stand for the trace form of the
defining representation of $\Sp(2n,\BC)$. Given a representation $\varrho\colon\fG\to\Sp(2n,\BC)$,
we obtain a bilinear form $\varrho^*\on{Tr}\in\on{Bil}(Q)^W$.
According to~Theorem~\ref{thm-tjf}, the vanishing of $\pi_4\varrho$ is equivalent to the divisibility
$\varrho^*\on{Tr}\in2\on{Bil}(Q)^W$.

Recall from~\S\ref{ano can} that for an arbitrary reductive $\fG$ with a Cartan torus $\fT$,
we denote by $\on{Bil}(X_*(\fT))^W$ the group of $W$-invariant integer-valued bilinear forms on
$X_*(\fT)$ such that $B(\lambda,\lambda)\in2\BZ$ for any $\lambda$ in the coroot sublattice
$Q\subset X_*(\fT)$. For a representation $\varrho\colon\fG\to\Sp(2n,\BC)$ we have to check the
equivalence of conditions $\pi_4\varrho=0$ and $\varrho^*\on{Tr}\in2\on{Bil}(X_*(\fT))^W$.

Note that the trace form $\on{Tr}$ on the coweight lattice of $\Sp(2n,\BC)$ assumes only
even values: $\on{Tr}(\lambda,\mu)$ is even for any coweights $\lambda,\mu$. Hence
$\varrho^*\on{Tr}$ assumes only even values for any $\varrho\colon\fG\to\Sp(2n,\BC)$
(so the problem is only to check if $\varrho^*\on{Tr}(\lambda,\lambda)$ is divisible by~4 for
any coroot $\lambda$). Now if $\fG=\fT$ is a torus, then $\pi_4(\fT)=0$, and the set of coroots
is empty. Hence the desired equivalence holds true for
any symplectic representation of any group of the form $\fG^\sico\times\fT$.

Finally, for general $\fG$, choose a finite cover $\varpi\colon\fG'\times\fT\twoheadrightarrow\fG$,
where $\fG'$ is semisimple simply-connected. It remains to check that $\varrho^*\on{Tr}$ is divisible
by~2 iff $(\varrho\circ\varpi)^*\on{Tr}$ is divisible by~2. This is clear since $\fG$ and
$\fG'\times\fT$ share the same coroots.

This completes the proof of~Proposition~\ref{theo}.

\subsection{Example}
In ``Alternative proposals (i)'' of~\cite[page 2]{t}, C.~Teleman considers an example of
homomorphism $\fG:=(\Sp(2)\times\SO(6))/\{\pm1\}\to\Sp(12)$, so that $\bM=\BC_-^2\otimes\BC_+^6$.
In this case the homomorphism $\pi_4(\Sp(2)\times\SO(6))\iso\pi_4(\fG)\to\pi_4(\Sp(12))$ vanishes.
And the pullback of the trace bilinear form from the coweight lattice of $\Sp(12)$ to the coweight
lattice of $\fG$ is divisible by~2, in accordance with~Proposition~\ref{theo}.

Indeed, the coweight lattice of $\Sp(2)\times\SO(6)$ has a natural basis
$\{\delta^*,\varepsilon_i^*,\ 1\leq i\leq 3\}$, and the coweight lattice of $\fG$ has an extra
generator $\frac12(\delta^*+\varepsilon_1^*+\varepsilon_2^*+\varepsilon_3^*)$. Note that
$\frac12(\delta^*+\varepsilon_1^*+\varepsilon_2^*+\varepsilon_3^*)$
represents a generator of $\pi_1(\fG)\simeq\BZ/4\BZ$.
The weights of the representation $\bM=\BC^2_-\otimes\BC^6_+$ are $\{\pm\delta\pm\varepsilon_i\}$.
A Lagrangian subspace has weights $\{\delta\pm\varepsilon_i\}$.
The $\Sp(12)$-weights of this Lagrangian subspace are denoted by $\{\delta_i,\ 1\leq i\leq6\}$.
Thus the homomorphism of the weight lattices takes $\delta_i,\ 1\leq i\leq 3$, to
$\delta+\varepsilon_i$, and $\delta_i,\ 4\leq i\leq 6$, to $\delta-\varepsilon_{i-3}$.
The dual homomorphism of the coweight lattices takes $\delta^*$ to $\sum_{i=1}^6\delta_i^*$,
and $\varepsilon_i^*$ to $\delta_i^*-\delta_{i+3}^*,\ 1\leq i\leq3$.
Hence $\frac12(\delta^*+\varepsilon_1^*+\varepsilon_2^*+\varepsilon_3^*)$ goes to
$\delta_1^*+\delta_2^*+\delta_3^*$. The trace pairing $\on{Tr}(\delta_i^*,\delta_j^*)=2\delta_{ij}$,
hence the trace pairing of $\delta_1^*+\delta_2^*+\delta_3^*$
with itself is equal to~6 and is divisible by~2.

Note also that the costalk of $\fR$ at the torus fixed point $\delta_1^*+\delta_2^*+\delta_3^*$
lives in cohomological degree~3. This degree is odd, so the part of
$\CA(\fG,\bM,\sqrt{\CL})=H^\bullet_{\fG_\CO}(\Gr_\fG,\CA_{\fG,\bM,\sqrt{\CL}})$ supported at the connected
component $^{(1)}\!\Gr_\fG$ of $\Gr_\fG$ containing
$\frac12(\delta^*+\varepsilon_1^*+\varepsilon_2^*+\varepsilon_3^*)$
lives in odd degrees. On the other hand, half the trace pairing of $\delta_1^*+\delta_2^*+\delta_3^*$
with itself is also odd. Hence the super line bundle $\sqrt{\CL}$ is {\em odd} at the component
$^{(1)}\!\Gr_\fG$. The total parity of $H^\bullet_{\fG_\CO}({}^{(1)}\!\Gr_\fG,\CA_{\fG,\bM,\sqrt{\CL}})$ is
{\em even}, and the resulting algebra $H^\bullet_{\fG_\CO}(\Gr_\fG,\CA_{\fG,\bM,\sqrt{\CL}})$
is commutative (as opposed to super-commutative), though it has a nontrivial odd $\BZ$-graded part.

\end{document}